\newtheorem{theorem}{Theorem}[section]
\newtheorem{corollary}[theorem]{Corollary}
\newtheorem{lemma}[theorem]{Lemma}
\newtheorem{proposition}[theorem]{Proposition}
\theoremstyle{definition}
\newtheorem*{definition}{Definition}
\newtheorem{remark}[theorem]{Remark}
\numberwithin{equation}{section}
\newtheorem*{theorem*}{Theorem}
\newcommand\pref[1]{~(\ref{#1})}
\def \bC {\mathbb C}
\def \bD {\mathbb D}
\def \bN {\mathbb N}
\def \bR {\mathbb R}
\def \bR {\mathbb R}
\def \bR {\mathbb R}
\def \bZ {\mathbb Z}
\def \cD {\mathcal D}
\def \cG {\mathcal G}
\def \cP {\mathcal P}
\def \cS {\mathcal S}
\def \fg {\mathfrak g}
\def \fk {\mathfrak k}
\def \fm {\mathfrak m}
\def \fs {\mathfrak s}
\def \ft {\mathfrak t}
\def \fu {\mathfrak u}
\def \fU {\mathfrak U}
\def \RE {\text{\rm Re}\,}
\def \IM {\text{\rm Im}\,}
\def \al {\alpha}
\def \la {\lambda}
\def \ph {\varphi}
\def \eps {\varepsilon}
\def \lan {\langle}
\def \ran {\rangle}
\def \de {\partial}
\def \inv{^{-1}}
\def \deg {\text{\rm deg\,}}
\def \dim {\text{\rm dim\,}}
\def \spaz {\text{\rm span\,}}
\def \tr {\text{\rm tr\,}}
\def \DD {\mathbf D}
\def\be{\begin{equation}}
\def\ee{\end{equation}}
\def\bes{\begin{equation*}}
\def\ees{\end{equation*}}
\def\bea{\begin{equation}\begin{aligned}}
\def\eea{\end{aligned}\end{equation}}
\def\beas{\begin{equation*}\begin{aligned}}
\def\eeas{\end{aligned}\end{equation*}}
\newcommand\grpU{{U_2}}
\newcommand\grpSU{{SU_2}}
\newcommand\base{\mathbf{o}} 
\newcommand\cGn{\cG_n}
\newcommand\cGmn{\cG_{\tau_{m,n}}}
\newcommand\baseuno {X_1}
\newcommand\basedue{X_2}
\newcommand\basetre{X_3}
\newcommand\basequattro{X_4}
\newcommand\Cas{\Omega}
\newcommand\diagonali{\mathcal{B}}
\newcommand\spaziorp{{V}}
\newcommand\spaziW{\mathcal{W}}
\newcommand\Smatr{\cS\big(\bC^2,{\rm End}(\spaziorp_n)\big)^K}
\newcommand\SmatrW{\cS(\bC^2,\spaziW_n^\ell)}
\newcommand\Lmatr{L^1\big(\bC^2,{\rm End}(\spaziorp_n)\big)^K}
\title[]{The Schwartz correspondence\\ for the complex motion group on $\bC^2$}
\author{Francesca Astengo}
\address{Dipartimento di Matematica, Universit\`a di Genova, Via Dodecaneso 35, 16146 Genova, Italy} 
\email{{\tt astengo@dima.unige.it}}
\author{Bianca Di Blasio}
\address{Dipartimento di Matematica e Applicazioni, Universit\`a di Milano Bicocca, Via Cozzi 53, 20125 Milano, Italy } 
\email{{\tt bianca.diblasio@unimib.it}}
\author{Fulvio Ricci}
\address{Scuola Normale Superiore, Piazza dei Cavalieri
7, 56126 Pisa, Italy } 
\email{{\tt fulvio.ricci@sns.it}}
\begin{document}


\begin{abstract}
 If $(G,K)$ is a Gelfand pair, with $G$ a Lie group of polynomial growth and $K$ a compact subgroup of $G$, the Gelfand spectrum $\Sigma$ of the bi-$K$-invariant algebra $L^1(K\backslash G/K)$ admits natural embeddings into $\bR^n$ spaces as a closed subset.

For any such embedding, define $\cS(\Sigma)$ as the space of restrictions to $\Sigma$ of Schwartz functions on $\bR^n$. We call  {\it Schwartz correspondence} for  $(G,K)$ the property that the spherical transform is an isomorphism of $\cS(K\backslash G/K)$ onto $\cS(\Sigma)$.

In all the cases studied so far, Schwartz correspondence has been proved to hold true. These include all pairs with $G=K\ltimes H$ and $K$ abelian and a large number of pairs with $G=K\ltimes H$ and $H$ nilpotent. 

In this paper we study what is probably the simplest of the pairs with $G=K\ltimes H$, $K$ non-abelian and $H$ non-nilpotent, with $H=M_2(\bC)$, the complex motion group, and $K=U_2$ acting on it by inner automorphisms.

\end{abstract}

\maketitle

\baselineskip15pt

\section{Introduction}
\medskip

Let $(G,K)$ be a Gelfand pair, with $G$ a connected Lie group and $K$ a compact subgroup of it. 
By definition, this means that the convolution algebra $L^1(K\backslash G/K)$ of bi-$K$-invariant integrable functions on $G$ is commutative, or, equivalently, that the composition algebra $\bD(G/K)$ of $G$-invariant differential operators on $G/K$ is commutative.

 The Gelfand spectrum $\Sigma$ of $L^1(K\backslash G/K)$ is the space of bounded spherical functions on $G$ with the topology induced by the weak* topology on $L^\infty(K\backslash G/K)$. For each choice of a finite generating subset $\cD=\{D_1,\dots, D_\ell\}$ of $\bD(G/K)$, $\Sigma$ can be homeomorphically embedded into $\bC^\ell$, by assigning to each spherical function $\ph\in\Sigma$  the $\ell$-tuple $(\xi_1,\dots,\xi_\ell)$  if $D_j\ph=\xi_j\ph$ for $j=1,\dots,\ell$. The image $\Sigma_\cD$ of this embedding is closed \cite{FR}. 
\medskip

If $G$ has polynomial volume growth and the generators $D_j\in\cD$ are taken essentially self-adjoint, the eigenvalues are real, so that $\Sigma_\cD\subset\bR^\ell$. We refer to \cite{ADR3} for a presentation of Gelfand pairs of polynomial growth and the proofs of various preliminary results that will be needed in this paper.

We say that  {\it Schwartz correspondence} holds for a Gelfand pair $(G,K)$ of polynomial growth if the following property is satisfied:
\begin{enumerate}
\item[(S)] The spherical transform maps the bi-$K$-invariant Schwartz space $\cS(K\backslash G/K)$ isomorphically onto the space $\cS(\Sigma_\cD)$ of restrictions to $\Sigma_\cD$ of Schwartz functions on~$\bR^\ell$.
\end{enumerate}

This is an intrinsic property of the pair because it does not depend on the choice of the generating system $\cD$ \cite{ADR2, ADR3, FRY1}. It has been proved to be satisfied by all Gelfand pairs $(G,K)$ with polynomial growth on which it has been tested so far. These include compact pairs (i.e., with $G$ compact) \cite{ADR3}, various families of nilpotent pairs (i.e., with $G=K\ltimes N$ and $N$ nilpotent)\cite{ADR1, ADR2, FiR, FRY1, FRY2}, and pairs with $G=K\ltimes H$, where $K$ is abelian \cite{ADR3}.

In this paper we go one step ahead, considering one of the lowest dimensional examples of a pair $(K\ltimes H,K)$ of polynomial growth, with $K$ non-abelian and $H$ non-nilpotent.

\medskip

 One relevant feature of the pairs $(G,K)$ with $G$ a semidirect product, $G=K\ltimes H$, is that the bi-$K$-invariant algebra $L^1(K\backslash G/K)$ is isomorphic to the   algebra $L^1(H)^K$ of $K$-invariant integrable functions on $H$, and the operator algebra $\bD(G/K)$ is isomorphic to $\bD(H)^K$, the algebra of left- and $K$-invariant differential operators on $H$.
  It follows that saying that $(K\ltimes H,K)$ is a Gelfand pair is the same as saying that convolution on $H$ becomes commutative when restricted to $K$-invariant (integrable) functions. 
  
  The so-called {\it strong Gelfand pairs} fall into this picture. Given $G$ and $K\subset G$ compact, we denote by ${\rm Int}(K)$ the group of inner automorphisms of $G$ induced by  elements of $K$ and say that a function is $K$-central if it is ${\rm Int}(K)$-invariant. One says that $(G,K)$ is a strong Gelfand pair if the algebra $L^1(G)^{{\rm Int}(K)}$ of $K$-central integrable functions is commutative. From what has been said above, this is the same as saying that $$
 \big({\rm Int}(K)\ltimes G,{\rm Int}(K)\big)\cong \big(K\times G,{\rm diag}(K)\big)
 $$
 is a Gelfand pair.
 
 Obviously, a strong Gelfand pair is a Gelfand pair, so that we must distinguish between the two different notions of spherical functions, Gelfand spectrum and Schwartz correspondence. We refer to \cite{ADR3, RS} for the relative details.
 
 The list of strong Gelfand pairs of polynomial growth for which the Schwartz correspondence is not known to hold consists of the three families \cite{Y}
 $$
(SO_n\ltimes\bR^n, SO_n)\ ,\ n\ge 3\ ,\qquad (U_n\ltimes\bC^n,U_n)\ ,\ n\ge 2\ ,\qquad (U_n\ltimes H_n,U_n)\ ,\ n\ge 2\ ,
$$
where $H_n$ is the $(2n+1)$-dimensional Heisenberg group. We also adopt the notation $M_n(\bR)$ for $SO_n\ltimes\bR^n$, $M_n(\bC)$ for $U_n\ltimes\bC^n$, the real and  complex motion groups.
\medskip

The main result of this paper is that property (S) holds true for the strong pair $\big(M_2(\bC),U_2\big)$.
The algebra 
 $L^1\big(M_2(\bC)\big)^{{\rm Int}(U_2)}$ splits as the direct sum of the subalgebras
$L^1_\tau\big(M_2(\bC)\big)^{{\rm Int}(U_2)}$, $\tau\in\widehat U_2$, of $U_2$-central functions $f$ which are of $K$-type $\tau$ (see Section 2 for definitions). 

The following general principle allows to reduce verification of property (S) to a Schwartz extension property for single $K$-type components.

\begin{theorem}[{\cite[Prop. 5.2 and Thm. 7.1]{ADR3}}]\label{rendiconti}
Property (S) holds for a strong Gelfand pair $(G,K)$ of polynomial growth if and only if the following condition is satisfied:
\begin{enumerate}
\item[\rm(S')] given $f\in \cS(G)^{{\rm Int}(K)}$ and $N\in\bN$, for each $K$-type component $f_\tau$ of $f$, $\tau\in\widehat K$,   $\cG f_\tau$ admits a Schwartz extension $g_\tau^N$ such that $\|g_\tau^N\|_{(N)}$ is rapidly decaying in $\tau$.
\end{enumerate}
\end{theorem}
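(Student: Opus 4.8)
The statement above is the abstract reduction principle behind the whole paper, and I would prove it by exploiting the fact that, for a strong pair, everything in sight is graded by $\widehat K$. Regard $(G,K)$ as the Gelfand pair $(K\times G,{\rm diag}\,K)$, so that $\cS(G)^{{\rm Int}(K)}$ is its bi-invariant Schwartz space and the Gelfand spectrum $\Sigma$ of $L^1(G)^{{\rm Int}(K)}$ splits as a disjoint union $\Sigma=\bigsqcup_{\tau\in\widehat K}\Sigma^\tau$, where $\Sigma^\tau$ consists of the spherical functions whose underlying representation has $K$-type $\tau$; correspondingly the spherical transform is block diagonal, $\cG f=\sum_\tau\cG f_\tau$ with $\cG f_\tau$ carried by $\Sigma^\tau$. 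Since property (S) and condition (S$'$) do not depend on the choice of $\cD$, I would choose $\cD$ to contain enough (essentially self-adjoint) operators from $Z(U(\fk))$ that their joint eigenvalues $\lambda(\tau)$ separate the $K$-types. This realizes $\Sigma_\cD$ as a closed subset of $\bR^{\ell_1}\times\bR^{\ell_2}$ fibered over the discrete, proper set $\Lambda=\{\lambda(\tau):\tau\in\widehat K\}\subset\bR^{\ell_1}$, the fiber over $\lambda(\tau)$ being a copy of $\Sigma^\tau$; and ``rapidly decaying in $\tau$'' becomes ``rapidly decaying as $|\lambda(\tau)|\to\infty$''.

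\textbf{The fiberwise description of $\cS(\Sigma_\cD)$.} The technical core is the lemma: a function $h$ on $\Sigma_\cD$ lies in $\cS(\Sigma_\cD)$ if and only if, for every $N$, the restriction $h|_{\Sigma^\tau}$ extends to some $g_\tau\in\cS(\bR^{\ell_2})$ with $\|g_\tau\|_{(N)}$ rapidly decreasing in $\tau$. The ``only if'' part is immediate: if $H\in\cS(\bR^{\ell_1}\times\bR^{\ell_2})$ restricts to $h$, then $g_\tau=H(\lambda(\tau),\cdot)$ works, since $\|H(\lambda(\tau),\cdot)\|_{(N)}\le C_{N,M}(1+|\lambda(\tau)|)^{-M}$ for all $M$. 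For the ``if'' part I would fix a bump $\eta$ on $\bR^{\ell_1}$ and scales $c_\tau$ so that the dilates $\eta(c_\tau(\,\cdot-\lambda(\tau)))$ have pairwise disjoint supports — possible because $\Lambda$, being the image of a weight lattice under a proper polynomial map, has separation bounded below by an inverse power of the weight size — and set $H(x_1,x')=\sum_\tau\eta\big(c_\tau(x_1-\lambda(\tau))\big)\,g_\tau(x')$; this sum is locally finite, hence $C^\infty$, it restricts to $h$ on $\Sigma_\cD$, and term-by-term differentiation together with the rapid decay of $\|g_\tau\|_{(N)}$ shows $H\in\cS(\bR^{\ell_1}\times\bR^{\ell_2})$.

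\textbf{Assembling the equivalence.} For (S)$\Rightarrow$(S$'$): given $f\in\cS(G)^{{\rm Int}(K)}$, property (S) gives $\cG f\in\cS(\Sigma_\cD)$, and restricting a global Schwartz extension of it to the affine subspace $\{x_1=\lambda(\tau)\}$ produces a Schwartz extension of $\cG f_\tau$ whose seminorms decay rapidly in $\tau$, exactly as in the ``only if'' computation; this is (S$'$). For (S$'$)$\Rightarrow$(S): applying (S$'$) to $f$ and feeding the resulting $g_\tau^N$ into the lemma shows that $\cG$ maps $\cS(G)^{{\rm Int}(K)}$ into $\cS(\Sigma_\cD)$, and tracking the constants gives continuity; $\cG$ is injective by Gelfand theory; to invert it, I would decompose $h\in\cS(\Sigma_\cD)$ fiberwise, invert the spherical transform on each $\Sigma^\tau$ (with kernel estimates that are polynomial, not merely finite, in $\tau$), and use the rapid decay of the $\|h_\tau\|_{(N)}$ — supplied by the lemma — to resum the pieces into a single $f\in\cS(G)^{{\rm Int}(K)}$ with $\cG f=h$; the open mapping theorem then upgrades this to a topological isomorphism.

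\textbf{Main obstacle.} The delicate point throughout is \emph{uniformity in $\tau$}: the fibers $\Sigma^\tau$ are typically unbounded and their embedded shape changes with $\tau$, so one must ensure that the gluing scales $c_\tau$, the bump $\eta$, the constants comparing fiber seminorms with global seminorms, and the inverse-transform kernel bounds on each fiber all depend at most polynomially on $\tau$; it is precisely this polynomial control — rather than mere finiteness for each fixed $\tau$ — that makes the fiberwise data resum to a genuine Schwartz object, and supplying it in the concrete case is the task the rest of the paper is devoted to.
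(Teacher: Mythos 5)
First, a point of reference: the paper does not prove this statement at all — it is imported verbatim from \cite{ADR3} (Prop.~5.2 and Thm.~7.1), so your attempt has to be judged against the argument given there. Your overall architecture (split $\Sigma=\bigsqcup_\tau\Sigma^\tau$, separate the $K$-types by eigenvalues of central operators of $K$ contained in $\cD$, characterize $\cS(\Sigma_\cD)$ fiberwise, glue with bumps along the discrete set $\Lambda$) is indeed the right skeleton and matches the spirit of \cite{ADR3}. But there are two genuine gaps. The first is in the ``if'' direction of your fiberwise lemma: in (S$'$) the extension $g^N_\tau$ depends on $N$, so when you set $H(x_1,x')=\sum_\tau\eta\big(c_\tau(x_1-\lambda(\tau))\big)g_\tau(x')$ you are gluing a family for which only the seminorms of one fixed order $N$ are known to decay rapidly in $\tau$; the higher seminorms $\|g^N_\tau\|_{(N')}$, $N'>N$, may grow arbitrarily with $\tau$, and then ``term-by-term differentiation shows $H\in\cS$'' fails — you only get an extension with finitely many controlled seminorms. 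One must first run a diagonal process (choose $u_\tau=g_\tau^{N_\tau}$ with $N_\tau\to\infty$ slowly, so that a single family has \emph{all} seminorms rapidly decaying) before gluing; this is exactly the step the present paper invokes from \cite{ADR3} in the proof of its Theorem~\ref{correspondence}, and it is missing from your argument.

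The second gap is the inverse direction inside (S$'$)$\Rightarrow$(S). Condition (S$'$) is purely a statement about the forward transform; to conclude that $\cG$ is an isomorphism \emph{onto} $\cS(\Sigma_\cD)$ you must show that every $h\in\cS(\Sigma_\cD)$ is the transform of a Schwartz function, and your plan — ``invert the spherical transform on each $\Sigma^\tau$ with kernel estimates that are polynomial in $\tau$'' — simply asserts the hard ingredient. For a fixed $\tau$, (S$'$) does not even give surjectivity of $\cG_\tau$ onto $\cS(\Sigma^\tau)$, let alone inverse bounds with polynomial dependence on $\tau$; no soft (closed-graph or open-mapping) argument produces the uniformity in $\tau$. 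In \cite{ADR3} this direction is not obtained fiberwise at all: it follows from the general result that $g\in\cS(\Sigma_\cD)\Rightarrow\cG\inv g\in\cS(G)$ for any Gelfand pair of polynomial growth (\cite[Prop.~5.1]{ADR3}), which rests on Martini's theorem on weighted subcoercive systems \cite[Prop.~4.2.1]{Mar1} — a substantial analytic input. Your ``main obstacle'' paragraph defers this uniformity to ``the rest of the paper'', but the rest of this paper supplies it only for the specific pair $\big(M_2(\bC),U_2\big)$ (Proposition~\ref{inversocontinuo}), whereas Theorem~\ref{rendiconti} is a statement about arbitrary strong Gelfand pairs of polynomial growth; as written, your proof of the general statement is therefore incomplete at exactly this point.
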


In our case the advantage in dealing with a single $K$-type $\tau$ is that the above extension problem can be equivalently reformulated as an extension problem for transforms of ${\rm End}(V_\tau)$-valued Schwartz functions on $\bC^2$, according to an isomorphism, quite standard in spherical analysis on semisimple groups \cite{ADR3, Camp, RS, War}, which intertwines convolution in the two settings. 

In this context standard Fourier analysis becomes available and the algebraic properties of the representation $\tau$ can be exploited. This is explained in Section 5. 

In the subsequent sections we prove 
\begin{enumerate}[(i)]
\item that Schwartz correspondence is satisfied by each $K$-type subalgebra (Sections 6 and subsections 7.1, 7.2);
\item that, denoting by $\cG_\tau$ the spherical transform restricted to functions of $K$-type $\tau$, $\cG_\tau\inv$ satisfies Schwartz norm inequalities that have moderate growth in $\tau$ (subsection 7.3);
\item that property (S') is satisfied (Section 8).
\end{enumerate}

Strictly speaking, this is more that what needs to be done. In fact, subsections 7.2 and 7.3 could be skipped by appealing to \cite[Prop. 5.1]{ADR3}, based upon \cite[Prop. 4.2.1]{Mar1}, stating that the implication $g\in\cS(\Sigma_\cD)\Rightarrow  \cG\inv g\in\cS(G)$ holds for general Gelfand pairs of polynomial growth. Application of this statement, however, would have still required some extra argument and, altogether,we find it preferable to  give a more self-contained proof.

In particular, we obtain that an ${\rm End}(V_\tau)$-valued $K$-equivariant Schwartz function~$F$ on $\bC^2$ can be expressed in a unique way as a sum
$$
F=\sum_{j=0}^n\mathbf D_n^jf_j\ ,
$$
where the $f_j$ are $K$-invariant scalar-valued Schwartz functions and $\mathbf D_n$ generates $\big(\bD(\bC^2)\otimes {\rm End}(V_\tau)\big)^K$ as a $\bD(\bC^2)^K$-module. 

This allows us to apply Schwartz correspondence for $\big(M_2(\bC),U_2\big)$ as an ordinary Gelfand pair, proved in \cite{ADR2}, by reducing the Schwartz extension problem to the single $f_j$.
However, the norm estimates that come along the way are not sufficient to guarantee that the extensions constructed in this way have the rapid decay in $\tau$ that is required in condition~(S').

To verify that this decay condition can be realized is the object of part (iii) of the proof. 
Given a $K$-central function $f$ on $M_2(\bC)$ with $K$-type components $f_\tau$, we then use a second procedure to extend the spherical transforms $\cG f_\tau$ of the $f_\tau$. It may look at this point that the previous construction was superfluous. The point is that the crucial step of this new procedure is the construction of an infinite jet at the origin which is compatible with the values  of $\cG f_\tau$. This amounts to proving, for each degree of homogeneity, that certain overdetermined linear systems are solvable, and this can be done by appealing to the fact that we already know that Schwartz extensions exist.

\bigskip

\section{Notation and definitions}
\medskip

In this section we collect the main notation and conventions that will be used in the paper, recalling some basic facts at the same time.

\subsection{Gelfand pairs and strong Gelfand pairs}\label{trasformate}\quad

Let $G$ be a  Lie group, $K$ a compact group and $\sigma$ an action of $K$ on $G$. We introduce the following notation.
\begin{itemize}
\item If  $X(G)$ is a space of scalar-valued functions on $G$, $X(G)^{\sigma(K)}$ denotes the subspace of $\sigma(K)$-invariant elements. We simply write $X(G)^K$ if there is no ambiguity on the action $\sigma$.
\item In particular, if $K\subseteq G$ and ${\rm Int}(K)$ denotes the group of conjugations by elements of~$K$, $X(G)^{{\rm Int}(K)}$ is the space of {\it $K$-central} functions on $G$. 
\item A function $f$ on $G$ is called {\it of $K$-type $\tau$}, with $\tau\in \widehat K$, if $f=f*_K(d_\tau\overline{\chi_\tau})$, where $d_\tau$ and $\chi_\tau$ are dimension and character of $\tau$. The symbol $X_\tau(G)^{{\rm Int}(K)}$ denotes the subspace of $K$-central functions of $K$-type $\tau$ in $X(G)$.
\item More generally, if  $V_\pi$ is a finite-dimensional representation space of $K$ and $X(G,V_\pi)$ is a space of $V_\pi$-valued functions on $G$, we denote by $X(G,V_\pi)^{\sigma\otimes\pi(K)}$ (or simply $X(G,V_\pi)^K$ when possible) the space of $K$-equivariant elements $F$ of $X(G,V_\pi)$, i.e., such that, for all $x\in G$ and $k\in K$, 
$F\big(\sigma(k)x\big)=\pi(k)F(x)$. 
\item By $\bD(G)$ we denote the algebra of left-invariant differential operators on $G$, and by $\bD(G)^{{\rm Int}(K)}\cong \fU(\fg)^{{\rm Ad}(K)}$ the subalgebra of those which are also invariant under ${\rm Int}(K)$. 
\item By $\bD(G/K)\cong  \fU(\fg)^{{\rm Ad}(K)}/\big( \fU(\fg)^{{\rm Ad}(K)}\cap\fU(\fg)\fk\big)$ we denote the algebra of $G$-invariant differential operators on $G/K$.
\end{itemize}

The following properties hold \cite{RS}:
\begin{enumerate}[(i)]
\item for each $\tau\in\widehat K$, $L^1_\tau(G)^{{\rm Int}(K)}$ is an algebra;
\item  $\sum_\tau L^1_\tau(G)^{{\rm Int}(K)}$ is dense in $L^1(G)^{{\rm Int}(K)}$  and given $f=\sum_\tau f_\tau$, $g=\sum_\tau g_\tau$ with $f_\tau,g_\tau\in L^1_\tau(G)^{{\rm Int}(K)}$, then
$$
f*g=\sum_\tau f_\tau*g_\tau\ ;
$$
in particular, $f_\tau*g_{\tau'}=0$ if $\tau\not\sim\tau'$;
\item with $\tau_0$ denoting the trivial representation of $K$, $L^1_{\tau_0}(G)^{{\rm Int}(K)}=L^1(K\backslash G/K)$.

\end{enumerate}

\begin{definition}
Let $G$ be a Lie group and $K$ a compact subgroup of $G$. 
\begin{enumerate}[\rm (a)]
\item $(G,K)$ is called a {\it Gelfand pair} if the algebra $L^1(K\backslash G/K)$ is commutative;
\item $(G,K)$ is called a {\it strong Gelfand pair} if the algebra $L^1(G)^{{\rm Int}(K)}$ is commutative;
\item for $\tau\in \widehat K$, $(G,K,\tau)$ is called {\it a commutative triple} if the algebra $L^1_\tau(G)^{{\rm Int}(K)}$ of $K$-central functions of $K$-type $\tau$ is commutative.
\end{enumerate}
\end{definition}

As a consequence of (i)-(iii), we have the following implications:

\be\label{implications}
\left.\begin{matrix}(G,K)\text{ strong Gelfand pair }\\ \big\Updownarrow\\(G,K,\tau)\text{ commutative triple for every }\tau\end{matrix}\right\} \Longrightarrow\left\{\begin{matrix}(G,K)\text{ Gelfand pair }\\ \big\Updownarrow\\(G,K,\tau_0)\text{ commutative triple }\end{matrix}\right.
\ee

Each of the three types of commutative structure listed in the previous definition has its own kind of {\it spherical functions}, 
defined as the normalized joint eigenfunctions of the appropriate differential operators and with the appropriate invariance properties. Precisely,
\begin{enumerate}[\rm (a')]
\item if $(G,K)$ is a Gelfand pair, the bi-$K$-invariant eigenfunctions of all operators in $\bD(G/K)$, taking value 1 at the identity element;
\item if $(G,K)$ is a strong Gelfand pair, the $K$-central eigenfunctions of all operators in $\bD(G)^{{\rm Int}(K)}$, taking value 1 at the identity element;
\item if $(G,K,\tau)$ is a commutative triple, the $K$-central eigenfunctions of $K$-type $\tau$ of all operators in $\bD(G)^{{\rm Int}(K)}$, taking value 1 at the identity element.
\end{enumerate}

The {\it bounded} spherical functions defined in (a'), resp. (b'), (c'), determine the multiplicative functionals on the corresponding $L^1$ algebra in (a), resp. (b), (c),  via the formula
$$
f\longmapsto \int_Gf(x)\ph(x\inv)\,dx\qquad \text{($\ph$  spherical).}
$$

In each case, the bounded spherical functions form the {\it Gelfand spectrum}  
of the corresponding $L^1$ algebra. Each Gelfand spectrum is endowed with the weak* topology induced from $L^\infty(G)$,  coinciding with the compact-open topology.

For given $f$, the map $\ph\longmapsto \int_Gf(x)\ph(x\inv)\,dx$, defined on the spectrum $\Sigma$, is the {\it spherical transform} of $f$ in the given structure.

If $\cD=\{D_1,\dots,D_\ell\}$ is a system of generators of the appropriate algebra of differential operators in (a')-(c'), we denote by $\Sigma_\cD\subset\bC^\ell$ the corresponding {\it embedded spectrum}, where each bounded spherical function is represented by the $\ell$-tuple of its eigenvalues w.r. to the elements of $\cD$. 

Then $\Sigma$ and $\Sigma_\cD$ are 
homeomorphic and $\Sigma_\cD$ is closed in $\bC^\ell$ \cite{FR}. If $G$ has polynomial growth and the $\ell$ generators are symmetric, then  $\Sigma_\cD\subset\bR^\ell$ \cite[Lemma~4.1]{ADR3}.
\medskip

 Assume that $(G,K)$ is a strong Gelfand pair, as we will do in the course of this paper. We denote by $\Sigma$
its spectrum and, for $\tau\in\widehat K$, we denote by $\Sigma^\tau$ the Gelfand spectrum of the commutative triple $(G,K,\tau)$. In particular, $\Sigma^{\tau_0}$ is the Gelfand spectrum of the underlying (non-strong) Gelfand pair $(G,K)$.

By \cite{RS,ADR3}, each $\ph\in\Sigma$ has a $K$-type, so that
$$
\Sigma=\bigcup_{\tau\in\widehat K}\Sigma^\tau\ ,
$$
where the union is disjoint and each term is open and closed.

By $\cG:L^1(G)^{{\rm Int}(K)}\longrightarrow C_0(\Sigma)$ we denote the spherical transform of the strong Gelfand pair~$(G,K)$. Then its restriction
$\cG_\tau: L^1_\tau(G)^{{\rm Int}(K)}\longrightarrow C_0(\Sigma^\tau)$
is the spherical transform of the commutative triple $(G,K,\tau)$, and $\cG_{\tau_0}$ the spherical transform of the (non-strong) Gelfand pair $(G,K)$.
\medskip

\subsection{Notation for $U_2$ and its irreducible representations} \label{s:rep}\quad

 First of all, we denote by $\tau_n$ the irreducible representation of $SU_2$ of dimension $n+1$ and by $\spaziorp_n$ the (abstract) representation space for $\tau_n$. We will often use the realization of $\spaziorp_n$ as the space $\cP^{(n,0)}(\bC^2)$ of holomorphic polynomials on $\bC^2$ that are homogeneous of degree $n$, with  
$$
\big[\tau_n(k)p\big](z)=p(k\inv z)\ ,\qquad  k\in SU_2\ .
$$

We then define, for $n\ge0$ and $m\in n+2\bZ$, the representation $\tau_{m,n}$ of $U_2$ on $\spaziorp_n$ such that, for $k=e^{i\theta}k'$ with $k'\in SU_2$,
$$
\tau_{m,n}(k)=e^{-im\theta}\tau_n(k')\ .
$$

For $\spaziorp_n=\cP^{(n,0)}(\bC^2)$, this takes the form
\[
 \big[\tau_{m,n}(k)p\big](z)=e^{-im\theta}p({k'}\inv z)=(\det k)^{(n-m)/2}\,p(k^{-1}z)\ ,\qquad k\in U_2\ .
\]

In particular, 
\be\label{m=n}
\big[\tau_{n,n}(k)p\big](z)=p(k^{-1}z)\ ,\qquad  k\in U_2\ .
\ee

We set
$$
E=\big\{(m,n): n\ge0\ ,\ n-m\in2\bZ\big\}\ .
$$

 We also fix the basis of $\fs\fu_2$
$$
\baseuno =\begin{bmatrix}i&0\\0&-i\end{bmatrix}
\ ,\qquad\basedue=\begin{bmatrix}0&1\\-1&0\end{bmatrix}
\ ,\qquad \basetre=\begin{bmatrix}0&i\\i&0\end{bmatrix} 
\ ,
$$
and set $X_4=iI$ to complete a basis of $\fu_2$.

Choosing $\ft=\bR \baseuno$ as a maximal toral subalgebra of $\fs\fu_2$, the elements $\basedue\pm i\basetre$ of $\fs\fu_2^\bC$ are root vectors, relative to the roots $\mp2i$ respectively.

Moreover, each monomial $z_1^jz_2^{n-j}$, $j=0,\dots,n$ is a weight vector for the representation $\tau_n$, with weight $(n-2j)i$.
With respect to the $K$-invariant  Fischer inner product
\[
\langle p,q\rangle_n=\frac{1}{n!}\, p(\de_z)q^*
\qquad\text{where}\quad q^*(z)=\overline{q(\bar z)}\ ,
\]
the normalized monomials
\be\label{basisofVn}
e^j_n(z)=\binom{n}{j}^{1/2}\,z_1^jz_2^{n-j} \qquad j=0,\ldots , n
\ee
form an orthonormal basis of $\spaziorp_n$.

\bigskip

\section{The complex motion group $M_2(\bC)$ and the strong Gelfand pair $\big(M_2(\bC),U_2\big)$}

The complex motion group $G=M_2(\bC)$ is the semidirect product $U_2\ltimes\bC^2$, where the action of $U_2$  on $\bC^2$ is the natural one. 

In the sequel the symbols $G$ and $K$  will more often be used to denote $M_2(\bC)$ and $U_2$ respectively. Only in a few occasions they will stand for elements of a general pair $(G,K)$. The context will remove any possible ambiguity.

\begin{proposition}
The pair $(G,K)=\big(M_2(\bC),U_2\big)$ is a strong Gelfand pair.
\end{proposition}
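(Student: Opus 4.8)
The goal is to prove that the convolution algebra $L^1(G)^{{\rm Int}(K)}$ of $K$-central integrable functions is commutative. The plan is to use Gelfand's classical device: produce a continuous, Haar-measure-preserving \emph{anti-automorphism} $\theta$ of $G$ such that $f\circ\theta=f$ for every $f\in L^1(G)^{{\rm Int}(K)}$. Since $G=M_2(\bC)$ has polynomial growth it is unimodular, so the substitution $y\mapsto\theta(y)$ in the convolution integral gives, for $K$-central $f,g$,
\[
(f*g)\circ\theta=(g\circ\theta)*(f\circ\theta)=g*f\ ,
\]
using $\theta(xz^{-1})=\theta(z)^{-1}\theta(x)$; and since conjugation by $K$ is an automorphism of $G$, $f*g$ is again $K$-central, hence $\theta$-invariant, so $(f*g)\circ\theta=f*g$ and therefore $f*g=g*f$. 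Thus everything reduces to exhibiting such a $\theta$.

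The anti-automorphism I would use is $\theta=c\circ\iota$, where $\iota(x)=x^{-1}$ is inversion and $c(k,v)=(\bar k,\bar v)$ is entrywise complex conjugation on $G=U_2\ltimes\bC^2$. One checks directly that $c$ is a continuous involutive automorphism of $G$: indeed $\bar k\in U_2$ because $k^{\,t}\bar k=I$, and $c$ respects the semidirect product because conjugation acts entrywise, so it commutes with matrix multiplication and with the $U_2$-action on $\bC^2$. Since $\iota$ is an involutive anti-automorphism and $c$ commutes with $\iota$, the composite $\theta=c\circ\iota$ is an involutive anti-automorphism, explicitly $\theta(k,v)=\big(\overline{k^{-1}},-\overline{k^{-1}v}\big)$. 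It preserves the Haar measure of $G$ because $\iota$ does and $c$ is the product of the Haar-preserving automorphism $k\mapsto\bar k$ of the compact group $U_2$ with an $\bR$-linear isometry of $\bC^2$.

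The crux is then the orbit identity: for every $(k,v)\in G$, $\theta(k,v)$ lies in the same ${\rm Int}(U_2)$-orbit as $(k,v)$ — equivalently, there exists $u\in U_2$ with $uku^{-1}=\overline{k^{-1}}$ and $uv=-\overline{k^{-1}v}$ — which, since $K$-central functions are constant on ${\rm Int}(K)$-orbits, forces $f(\theta(k,v))=f(k,v)$. To prove it I would use the spectral decomposition $k=\sum_j\lambda_jP_j$, with $|\lambda_j|=1$ and $P_j$ the orthogonal eigenprojections; then $\overline{k^{-1}}=\sum_j\lambda_j\overline{P_j}$, where $\overline{P_j}$ is the projection onto the complex-conjugate eigenspace $\overline{W_j}$, so $k$ and $\overline{k^{-1}}$ have the same spectrum. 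On each eigenspace the vectors $P_jv$ and $-\lambda_j\,\overline{P_jv}\in\overline{W_j}$ have equal Hermitian norm, so one can pick a unitary $W_j\to\overline{W_j}$ carrying the first to the second (and any unitary $W_j\to\overline{W_j}$ when $P_jv=0$); the orthogonal direct sum $u=\bigoplus_j(u|_{W_j})$ is then a unitary with $uku^{-1}=\overline{k^{-1}}$ and $uv=\sum_j u(P_jv)=-\sum_j\lambda_j\overline{P_jv}=-\overline{k^{-1}v}$, as required. I expect this orbit-matching to be the main point of the proof — routine, but one must be a little careful to assemble the eigenspace-wise unitaries into a single global unitary and to handle the components with $P_jv=0$.

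Finally, I would remark that the statement also follows from Yakimova's classification of strong Gelfand pairs of polynomial growth cited in the introduction, and can alternatively be obtained from the criterion (valid for the Type I groups at hand, via the equivalence with $(K\times G,{\rm diag}\,K)$ being a Gelfand pair and Frobenius reciprocity) that $(G,K)$ is a strong Gelfand pair if and only if every irreducible unitary representation of $G$ restricts to $K$ without multiplicity. By Mackey's little-group method the irreducibles of $U_2\ltimes\bC^2$ are either the $\tau\in\widehat{U_2}$ acting trivially on $\bC^2$, whose restriction to $U_2$ is irreducible, or are induced from the stabilizer $\cong U_1$ of a nonzero character of $\bC^2$, in which case the restriction to $U_2$ is ${\rm Ind}_{U_1}^{U_2}\chi_m$; and the latter is multiplicity-free because the weights of any $\tau_{m',n'}$ restricted to this $U_1$ are pairwise distinct.
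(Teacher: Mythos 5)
Your proof is correct, but it takes a genuinely different route from the paper. The paper disposes of the proposition in two lines by citing Ricci--Samanta (Thm.~10.1 and Cor.~10.4 of \cite{RS}), which show that $\big(M_2(\bC),U_2,\tau_{m,n}\big)$ is a commutative triple for every $(m,n)$ — essentially the multiplicity-free restriction criterion you sketch in your closing remark — and then invokes the equivalence \eqref{implications} between ``strong Gelfand pair'' and ``commutative triple for every $\tau$''. Your main argument is instead a self-contained twisted Gelfand trick: the involutive anti-automorphism $\theta(k,v)=\big(\overline{k^{-1}},-\overline{k^{-1}v}\big)$ preserves Haar measure, and the spectral-decomposition argument showing that $\theta(k,v)$ lies in the same ${\rm Int}(U_2)$-orbit as $(k,v)$ is sound (the eigenspace-wise unitaries $W_j\to\overline{W_j}$ assemble into a global unitary because conjugation preserves orthogonality, and the norm matching $\|P_jv\|=\|\lambda_j\overline{P_jv}\|$ is exactly what is needed), so every $K$-central $f$ is $\theta$-invariant and $f*g=g*f$ follows. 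What each approach buys: the paper's citation is shorter and reuses machinery that is needed anyway in the sequel — the multiplicity-free decomposition of $V_n$ under the stabilizer $K_r$ reappears verbatim in the construction of the ${\rm End}(V_n)$-valued spherical functions — whereas your anti-automorphism argument is elementary, avoids any representation theory or Mackey analysis, and proves commutativity directly at the level of the convolution algebra; its only (standard) loose end is the passage from the pointwise orbit identity to $f\circ\theta=f$ for $L^1$-classes, which is handled by density of continuous $K$-central functions of compact support.
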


\begin{proof} It follows from \cite[Thm.~10.1, see also Cor.~10.4]{RS} that $\big(M_2(\bC),U_2,\tau_{m,n}\big)$ is a commutative triple for every $m,n$. By \eqref{implications}, this gives the conclusion.
\end{proof}

We write elements of $G$ as pairs $(k,z)\in U_2\times \bC^2$ with product
$$
(k,z)(k',z')=(kk',z+kz')\ .
$$

The adjoint action of $U_2$  on the Lie algebra $\fg=\fm_2(\bC)\cong\fu_2\times\bC^2$ is
$$
{\rm Ad}(k)(U,z)=(kUk\inv,kz)\ ,
$$
which splits $\fg$ as $\fs\fu_2\times\bR(iI)\times\bC^2$. We decompose $U\in\fu_2$ as $X+itI$ with $X\in\fs\fu_2$ and $it=\tr U$ with $t\in\bR$.

The algebra of ${\rm Ad}(U_2)$-invariant polynomials on $\fg$ is freely generated by the four polynomials
$$
 p_1=|z|^2\ ,\qquad p_2=z^*Xz\ ,\qquad p_3=\det X=|X|^2\ ,\qquad p_4=t\ ,
$$
where $z$ is represented by the column vector $\begin{bmatrix}z_1\\z_2\end{bmatrix}$, cf. \cite[Thm. 7.5]{FRY1}. Writing 
$$
X=\begin{bmatrix}ix_1&x_2+ix_3\\-x_2+ix_3&-ix_1\end{bmatrix}=x_1X_1+x_2X_2+x_3X_3\ ,
$$
the second polynomial becomes
$$
p_2=ix_1\big(|z_1|^2-|z_2|^2\big)-x_2\big (z_1\bar z_2-\bar z_1z_2\big)+ix_3\big(\bar z_1z_2+z_1\bar z_2\big)\ .
$$

In order to  obtain generators $D_j$ of $\bD(G)^{{\rm Int}(K)}$, we apply the symmetrization $\la'$ in \cite[formula (2.4)]{RS} to the $p_j$'s. Allowing multiplication by scalar coefficients in order to obtain symmetric operators, positive when they have a sign, we set
$$
D_1=\Delta_z\ ,\qquad D_3=\Cas\ ,\qquad D_4=iX_4\ ,
$$
where $\displaystyle \Delta_z=-4\,\sum_{j=1,2}\big(\de_{z_j}\de_{\bar {z_j}} +\de_{\bar {z_j}}\de_{{z_j}}\big)$ and $\Cas=-X_1^2-X_2^2-X_3^3$ is the Casimir operator on~$SU_2$.

To obtain $D_2$, observe that, if $p(x,z)=\sum_jq_j(x)r_j(\RE z,\IM z)$, the symmetrization $\la'$ on $G$ is symmetrization on $U_2$ followed by symmetrization on $\bC^2$ on each summand, i.e.,
\be\label{lambda'}
\la'(p)f(k,z)=\sum_jr_j(\de_{\RE w},\de_{\IM w})_{|_{w=0}}q_j(\de_x)_{|_{x=0}} f\big((k,z)(e,w)(\exp_Kx,0)\big)\ .
\ee

Therefore
$$
 D_2=i\big(\Delta_{z_2}-\Delta_{z_1}\big)X_1-4\big(\de_{\bar z_1}\de_{z_2}-\de_{z_1}\de_{\bar z_2}\big)X_2+4i\big(\de_{z_1}\de_{\bar z_2}+\de_{\bar z_1}\de_{z_2}\big)X_3\ .
$$

\begin{remark}\label{SU2}\label{SU2}
It is worth noticing that $\big(SM_2(\bC),SU_2\big)$, where $SM_2(\bC)=SU_2\ltimes\bC^2$, is a Gelfand pair but not a strong one. To see this, one can use the representation theoretic argument in \cite[Cor. 10.4]{RS} or, alternatively, observe that the polynomial
$$
q(X,z)={}^t\!zJX z\ ,\qquad \text{where }J=\begin{bmatrix}0&1\\-1&0\end{bmatrix}
$$
on $\fs\fu_2\times\bC^2$ is $SU_2$-invariant, but its symmetrization 
$$
\la'(q)=-2i\de_{z_1}\de_{z_2}X_1-(\de_{z_1}^2+\de_{z_2}^2)X_2+i(\de_{z_1}^2-\de_{z_2}^2)X_3
$$
 does not commute with $D_2$.
\end{remark}

\bigskip

\section{ $K$-type subalgebras and ${\rm End}(\spaziorp_n)$-valued spherical analysis}\label{EndVn}

It is a general fact that, given a Lie group $G$ together with a compact subgroup $K$ and a representation $\tau\in\widehat K$,  there is a one-to-one correspondence between $K$-central scalar-valued functions $f$ on $G$ of $K$-type $\tau$ and bi-$\tau$-equivariant integrable functions $F$ from $G$ to ${\rm End}(\spaziorp_\tau)$, i.e., verifying the identity
\[
F(k_1xk_2)=\tau(k_2\inv)F(x)\tau(k_1\inv)\ ,\qquad\forall\,k_1,k_2\in K\ .
\]

This correspondence is given by
\be\label{f<->Fgeneral}
f\longmapsto F(x)=\int_Kf(xk)\tau(k)\,dk\ ,\qquad F\longmapsto f(x)=d_\tau\tr F(x)\ ,
\ee
 preserves integrability and respects convolution, once this is defined on ${\rm End}(\spaziorp_\tau)$-valued functions as
$$
F_1*F_2(x)=\int_G F_2(y\inv x)\,F_1(y)\,dy\ .
$$

Basic notions and results can be found in \cite[vol. II, Ch. 6]{War} and in \cite{Camp, RS}. 

\medskip

In our case, since $G=U_2\ltimes\bC^2$, we can reduce matters to ${\rm End}(\spaziorp_n)$-valued functions on $\bC^2$.
In fact, the restriction $F_\flat$ of a bi-$\tau_{m,n}$-equivariant $F$ on $G$ to $\{e\}\times\bC^2$ completely determines $F$ and satisfies the identity
\be\label{flat}
F_\flat(kz)=F\big((k,0)(e,z)(k\inv,0)\big)=\tau_{m,n}(k)F_\flat(z)\tau_{m,n}(k\inv)\ ,\qquad\forall\,k\in U_2\ .
\ee

Conversely, given $F$ on $\bC^2$ such that
\be\label{tau-equivariance1}
F(kz)=\tau_{m,n}(k)F(z)\tau_{m,n}(k\inv)\ ,\qquad\forall\,k\in U_2\ ,
\ee
 the function
\be\label{sharp}
F^\sharp(k,z)=\tau_{m,n}(k\inv)F(z)
\ee
is bi-$\tau_{m,n}$-equivariant on $G$.

Moreover, the two maps $F\longmapsto F_\flat$ and $F\longmapsto F^\sharp$ are mutually inverse and respect convolution.

\begin{remark}\label{tildetaun}
It must be observed that the equivariance condition \eqref{tau-equivariance1}
does not depend on~$m$. 
This reflects the fact that, in  the tensor products 
$$
\tau'_{m,n}\otimes\tau_{m,n}=\tau_{-m,n}\otimes\tau_{m,n}\ ,
$$
the center of $U_2$ acts trivially, so that they all define the  same representation of $U_2$, denoted by $\tilde\tau_n$, on ${\rm End}(\spaziorp_n)\cong V'_n\otimes \spaziorp_n$.

It also reflects the fact that, for fixed $n$,
the $L^1_{\tau_{m,n}}(G)^{{\rm Int}(K)}$ algebras are all isomorphic, since 
\be\label{m-iso}
f\in L^1_{\tau_{m,n}}(G)^{{\rm Int}(K)}\ \Longleftrightarrow\ (\det k)^{(m-m')/2}f\in L^1_{\tau_{m',n}}(G)^{{\rm Int}(K)}\ ,
\ee
and multiplication by $(\det k)^{(m-m')/2}$ respects convolution. 

It is important to keep track, however, of the fact that $\tilde\tau_n$ is a representation of $U_2$ and not just of $SU_2$: e.g., if the action is restricted to $SU_2$, the equivariance condition \eqref{tau-equivariance1} does not include the identity $F(e^{i\theta}z)=F(z)$ for $n>0$ and commutativity is lost, n accordance with Remark \ref{SU2}.
\end{remark}

We say that an ${\rm End}(\spaziorp_n)$-valued function on $\bC^2$ is $K$-equivariant
if
\be\label{tau-equivariance}
F(kz)=\tilde\tau_n(k)F(z)=\tau_{m,n}(k)F(z)\tau_{m,n}(k\inv)\ ,\qquad\forall\,k\in K\ ,
\ee
and denote by $\Lmatr$, and similarly for other function spaces, the subspace of {$K$-equivariant} functions.

Combining \eqref{f<->Fgeneral}-\eqref{sharp} together, we have the following.

\begin{lemma}\label{f<->F}
The two maps
\begin{align*}
\begin{array}{rcrcrcl}A_{m,n}&:&f(k,z)&\longmapsto &F(z)&=& \tau_{m,n}\big(f(\cdot, z)\big) \ = \ \int_Kf(k,z)\tau_{m,n}(k)\,dk\\ 
A_{m,n}\inv&:&F(z)&\longmapsto &f(k,z)&=&(n+1)\tr\big(\tau_{m,n}(k\inv)F(z)\big)
\end{array}
\end{align*}
establish a one-to-one correspondence between $K$-central functions $f$ on $G$ of $K$-type $\tau_{m,n}$ and $K$-equivariant ${\rm End}(\spaziorp_n)$-valued functions on $\bC^2$.

In particular, $A_{m,n}$ is an isomorphism of algebras from $L^1_{\tau_{m,n}}(G)^{{\rm Int}(K)}$ onto $L^1\big(\bC^2,{\rm End}(\spaziorp_n)\big)^K$ and $\sqrt{n+1}A_{m,n}$ is unitary from $L^2_{\tau_{m,n}}(G)^{{\rm Int}(K)}$ onto $L^2\big(\bC^2,{\rm End}(\spaziorp_n)\big)^K$, where, for $F\in L^2\big(\bC^2,{\rm End}(\spaziorp_n)\big)$,
\be\label{norma2}
\|F\|_2^2=\int_{\bC^2}\big\|F(z)\big\|^2_{HS}\,dz\ .
\ee
\end{lemma}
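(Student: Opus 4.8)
The plan is to verify the three asserted properties of the pair of maps $A_{m,n}$ and $A_{m,n}\inv$ in sequence, each reducing to a routine computation once the right general facts are invoked: (i) that $A_{m,n}$ and $A_{m,n}\inv$ are mutually inverse bijections between the stated function classes, (ii) that $A_{m,n}$ is multiplicative, (iii) that a suitable normalization is an $L^2$-isometry.

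First I would identify $A_{m,n}$ with the composition $f\mapsto F^{(G)}\mapsto (F^{(G)})_\flat$, where $F^{(G)}(x)=\int_K f(xk)\tau_{m,n}(k)\,dk$ is the bi-$\tau_{m,n}$-equivariant function on $G$ attached to $f$ by \eqref{f<->Fgeneral}, and $(\cdot)_\flat$ is restriction to $\{e\}\times\bC^2$ as in \eqref{flat}. Since $f$ is $K$-central of $K$-type $\tau_{m,n}$, the general correspondence \eqref{f<->Fgeneral}, together with the discussion around \eqref{flat}--\eqref{sharp}, shows that $f\mapsto F^{(G)}$ is a bijection onto bi-$\tau_{m,n}$-equivariant integrable functions on $G$, that $F\mapsto F_\flat$ is a bijection onto functions on $\bC^2$ satisfying \eqref{tau-equivariance1} (equivalently \eqref{tau-equivariance}, by Remark~\ref{tildetaun}), and that both maps respect convolution, with inverses given by $F^{(G)}\mapsto d_{\tau}\tr F^{(G)}$ and $F\mapsto F^\sharp$. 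Composing the two inverse formulas gives exactly $A_{m,n}\inv(F)(k,z)=(n+1)\tr\big(\tau_{m,n}(k\inv)F(z)\big)$, since $d_{\tau_{m,n}}=n+1$; this establishes (i) and the algebra isomorphism statement (ii). A small point to check here is the normalization constant: one must confirm that the convention for $f\mapsto F$ in \eqref{f<->Fgeneral} (no factor) pairs with the factor $d_\tau$ in the trace inversion, and that restriction/extension along $\flat,\sharp$ introduces no further constant; this is where I would be most careful, as the factor $n+1$ is exactly what makes the later $L^2$ statement come out clean.

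For (iii), the claim is that $\sqrt{n+1}\,A_{m,n}$ is unitary. The cleanest route is to compute $\|f\|_2^2$ for $f=A_{m,n}\inv(F)$ directly. Using $f(k,z)=(n+1)\tr\big(\tau_{m,n}(k\inv)F(z)\big)$ and integrating over $G=K\ltimes\bC^2$ (with $dx=dk\,dz$), we get
\[
\|f\|_2^2=(n+1)^2\int_{\bC^2}\int_K\big|\tr\big(\tau_{m,n}(k\inv)F(z)\big)\big|^2\,dk\,dz.
\]
By Schur orthogonality applied to the matrix coefficients of the irreducible $\tau_{m,n}$, the inner integral equals $\frac{1}{n+1}\|F(z)\|_{HS}^2$ (this is the standard identity $\int_K|\tr(\tau(k)^* B)|^2\,dk=\frac{1}{d_\tau}\|B\|_{HS}^2$ for $\tau$ irreducible of dimension $d_\tau$, proved by expanding the trace in an orthonormal basis and using orthogonality of the entries $(k\mapsto\langle\tau(k)e_i,e_j\rangle)$). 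Hence $\|f\|_2^2=(n+1)\int_{\bC^2}\|F(z)\|_{HS}^2\,dz=(n+1)\|F\|_2^2$, i.e. $\|f\|_2=\sqrt{n+1}\,\|F\|_2=\|\sqrt{n+1}\,A_{m,n}f\|_2$, which is the asserted unitarity; surjectivity onto $L^2\big(\bC^2,{\rm End}(\spaziorp_n)\big)^K$ follows from part (i) by a density argument (the bijection on $L^1\cap L^2$ extends to an isometric bijection of the $L^2$ spaces).

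The main obstacle is essentially bookkeeping rather than conceptual: pinning down all normalization constants consistently (the factor $n+1=d_{\tau_{m,n}}$, the placement of inverses and adjoints in $\tau_{m,n}(k\inv)$ versus $\tau_{m,n}(k)^*$, and the Haar measure normalizations on $K$ and on $\bC^2$) so that the Schur orthogonality computation yields precisely $(n+1)$ and not some other power. Everything else follows from the general theory of bi-$\tau$-equivariant functions already recalled in \eqref{f<->Fgeneral}--\eqref{sharp} together with Remark~\ref{tildetaun}, which reconciles the $m$-dependence of $\tau_{m,n}$ with the $m$-independence of the equivariance condition \eqref{tau-equivariance} on $\bC^2$.
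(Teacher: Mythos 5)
Your proposal is correct and follows essentially the same route as the paper, which states the lemma without a detailed proof ("Combining \eqref{f<->Fgeneral}--\eqref{sharp} together...") and relies on exactly the identifications you make: $A_{m,n}$ as the general correspondence \eqref{f<->Fgeneral} composed with the restriction/extension maps $\flat,\sharp$, with $d_{\tau_{m,n}}=n+1$ accounting for the constant. Your explicit Schur-orthogonality computation $\int_K\big|\tr\big(\tau(k\inv)B\big)\big|^2\,dk=\tfrac1{n+1}\|B\|_{HS}^2$ correctly supplies the $L^2$ normalization that the paper leaves implicit (cf.\ Remark~\ref{rem:fAmnf}), so no gap remains.
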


\begin{remark} \label{rem:fAmnf}
The relation between $f$ and $A_{m,n}f$ can be made more explicit observing the following facts.

Let $G=K\ltimes H$ with $K$ compact. Then a function $f$ on $G$ is of $K$-type $\tau\in\widehat K$ if and only if it can be written as
$$
f(k,h)=d_\tau\sum_{i,j}f_{ij}(h)\overline{\tau_{ij}(k)} ,
$$
where, relative to some orthonormal basis of $\spaziorp_\tau$, $\tau(k)=\big(\tau_{ij}(k)\big)_{i,j}$  and $F(h)=\big(f_{ij}(h)\big)_{i,j}$.
The right-hand side equals $d_\tau\tr\big(\tau(k\inv)F(h)\big)$ and $K$-centrality of $f$ is equivalent to $K$-equivariance of $F$.
\end{remark}

\medskip

We denote by $\big(\bD(\bC^2)\otimes {\rm End}(\spaziorp_n)\big)^K$
the algebra of ``${\rm End}(\spaziorp_n)$-valued'' differential operators on $\bC^2$ which commute with translations and with the action of $\grpU$ on smooth ${\rm End}(\spaziorp_n)$-valued functions $F$ given by
$$
k:F\longmapsto F^k(h)=\tau_{m,n}(k)F(k\inv\cdot h)\tau_{m,n}(k)\inv\ .
$$

We recall the linear symmetrization $\la':\cP(\fu_2\times\bC^2)\longrightarrow \bD(G)$ defined in \eqref{lambda'} for polynomials in separate variables. The following statement, proved in \cite[Cor. 2.3 and Prop.~2.4]{RS}, gives the conjugation formula for $\la'(p)$ under $A_{m,n}$.

\begin{lemma}\label{symmetrization}
Let $p(x,z)=\sum_jq_j(x)r_j(z)$ be a polynomial on $\fu_2\times\bC^2$. Defining  $\check q_j(x)=q_j(-x)$, we have the identity
$$
A_{m,n}\la'(p)A_{m,n}^{-1}=\sum_j r_j(\de)\otimes d\tau_{m,n}\big(\la_K(\check q_j)\big)\ .
$$

Conjugation by $A_{m,n}$ is a homomorphism of $\bD(G)^{{\rm Int}(K)}$   onto $\big(\bD(\bC^2)\otimes {\rm End}(\spaziorp_n)\big)^{K}$ and its kernel consists of the operators which vanish on functions of $K$-type $\tau_{m,n}$.
\end{lemma}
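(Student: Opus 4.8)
The plan is to establish the displayed identity by a direct computation and then read off the remaining structural claims. Since (a form of) symmetrization, $\la'$ is a linear ${\rm Ad}(K)$-equivariant bijection of $S(\fg)$ onto $\fU(\fg)$, hence of $S(\fg)^{{\rm Ad}(K)}$ onto $\bD(G)^{{\rm Int}(K)}$; in particular every element of $\bD(G)^{{\rm Int}(K)}$ is $\la'(p)$ for some $p=\sum_j q_j(x)\,r_j(z)$ with $\sum_j q_j\otimes r_j$ an ${\rm Ad}(K)$-invariant element of $S(\fu_2)\otimes\cP(\bC^2)$, and it is enough to treat such $p$ (this is anyway the only case in which $\la'(p)\in\bD(G)^{{\rm Int}(K)}$). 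Fix $F\in\smoothmatr$ and put $f=A_{m,n}\inv F$, so that $f(k,z)=(n+1)\,\tr\big(\tau_{m,n}(k\inv)F(z)\big)$ by Remark~\ref{rem:fAmnf}.

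Next I would compute $\la'(p)f$ from \eqref{lambda'}. Using $(k,z)(e,w)(\exp_K x,0)=\big(k\exp_K x,\ z+kw\big)$, the variable $x$ occurs only inside $\tau_{m,n}\big((\exp_K x)\inv\big)=\tau_{m,n}\big(\exp_K(-x)\big)$; applying $q_j(\de_x)_{|_{x=0}}$, the substitution $u=-x$ turns $q_j(\de_x)$ into $\check q_j(\de_u)$, and the standard symmetrization identity $\big[u(\de_x)\big]_{|_{x=0}}\pi\big(\exp_K x\big)=d\pi\big(\la_K(u)\big)$ converts this factor into $Q_j:=d\tau_{m,n}\big(\la_K(\check q_j)\big)$. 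The variable $w$ occurs only inside $F(z+kw)$, and since $U_2$ acts on $\bC^2\cong\bR^4$ by orthogonal matrices the chain rule gives $r_j(\de_w)_{|_{w=0}}F(z+kw)=\big(r_j(k\inv\de)F\big)(z)$, where $r_j(k\inv\de)$ denotes the $k$-rotate of the constant-coefficient operator $r_j(\de)$. Thus
\[
\la'(p)f(k,z)=(n+1)\sum_j\tr\Big(Q_j\,\tau_{m,n}(k\inv)\,\big(r_j(k\inv\de)F\big)(z)\Big).
\]
I would then apply $A_{m,n}$, i.e.\ integrate against $\tau_{m,n}(k)\,dk$. Rewriting $r_j(k\inv\de)F=\tau_{m,n}(k)\,\big[\big(r_j(\de)F\big)\circ k\inv\big]\,\tau_{m,n}(k\inv)$ by the $K$-equivariance \eqref{tau-equivariance} of $F$ cancels the two central $\tau_{m,n}(k\inv)$-factors and leaves $(n+1)\int_K\tr\big(\Psi(k\inv z)\,\tau_{m,n}(k\inv)\big)\,\tau_{m,n}(k)\,dk$, with $\Psi:=\sum_j Q_j\,r_j(\de)F$. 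Here the ${\rm Ad}(K)$-invariance of $\sum_j q_j\otimes r_j$ is essential: combined with the ${\rm Ad}$-equivariance of $\la_K$, with $d\tau_{m,n}\big({\rm Ad}(k)u\big)=\tau_{m,n}(k)\,d\tau_{m,n}(u)\,\tau_{m,n}(k\inv)$, and with the fact that $q\mapsto\check q$ commutes with ${\rm Ad}(k)$, it yields the operator identity $\sum_j Q_j\,r_j(\de)=\sum_j\tau_{m,n}(k)\,Q_j\,\tau_{m,n}(k\inv)\,r_j(k\inv\de)$; evaluating it on $F$ and using \eqref{tau-equivariance} once more shows that $\Psi$ is itself $K$-equivariant. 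Equivariance of $\Psi$ collapses the integrand to $\tr\big(\tau_{m,n}(k\inv)\Psi(z)\big)\,\tau_{m,n}(k)$ with $\Psi(z)$ independent of $k$, and Schur orthogonality $\int_K\tr\big(\tau_{m,n}(k\inv)M\big)\tau_{m,n}(k)\,dk=\tfrac1{n+1}M$ gives $A_{m,n}\la'(p)A_{m,n}\inv F=\Psi$, which is exactly $\sum_j r_j(\de)\otimes d\tau_{m,n}\big(\la_K(\check q_j)\big)$ applied to $F$.

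For the rest: conjugation by $A_{m,n}$ automatically respects composition, so once the identity displays each $A_{m,n}DA_{m,n}\inv$ as a genuine element of $\big(\bD(\bC^2)\otimes {\rm End}(\spaziorp_n)\big)^{K}$ — its $K$-invariance being precisely the equivariance of $\Psi$ — it is an algebra homomorphism. For surjectivity, $d\tau_{m,n}:\fU(\fu_2)\to{\rm End}(\spaziorp_n)$ is onto (Burnside's theorem, $\tau_{m,n}$ being irreducible) while $\la_K$ and $q\mapsto\check q$ are linear bijections, so $q\mapsto d\tau_{m,n}\big(\la_K(\check q)\big)$ maps $S(\fu_2)$ onto ${\rm End}(\spaziorp_n)$, intertwining ${\rm Ad}(K)$ with the conjugation action $\tilde\tau_n$; tensoring with $\cP(\bC^2)$ and taking $K$-invariants (an exact operation, $K$ being compact) shows $S(\fg)^{{\rm Ad}(K)}\to\big(\bD(\bC^2)\otimes {\rm End}(\spaziorp_n)\big)^{K}$ is onto, hence so is conjugation by $A_{m,n}$. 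Finally, operators in $\bD(G)^{{\rm Int}(K)}$ preserve the $K$-central functions of $K$-type $\tau_{m,n}$ (these form a subspace stable under left $K$-translations, with which left-invariant ${\rm Int}(K)$-invariant operators commute) and $A_{m,n}$ is injective on that subspace, so $D$ lies in the kernel iff $Df=0$ for every $f$ of $K$-type $\tau_{m,n}$.

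I expect the main nuisance to be purely bookkeeping: keeping the side on which ${\rm End}(\spaziorp_n)$ multiplies, the $k$ versus $k\inv$ in the rotated operator $r_j(k\inv\de)$, and the precise form of the symmetrization identity (with its $q\mapsto\check q$) all mutually consistent, so that every $\tau_{m,n}(k)$-factor cancels as claimed. The one genuinely substantive point is recognizing that ${\rm Ad}(K)$-invariance of $p$ is exactly what makes $\Psi$ $K$-equivariant and thereby allows Schur orthogonality to close the computation.
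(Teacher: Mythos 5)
Your argument is correct in substance, but note that the paper itself gives no proof of this lemma: it is quoted from \cite[Cor.~2.3 and Prop.~2.4]{RS}, so your direct computation is a self-contained reconstruction of the cited result rather than a variant of an argument in the text. The computation itself is sound: the factorization $(k,z)(e,w)(\exp_Kx,0)=(k\exp_Kx,\,z+kw)$, the conversion of $q_j(\de_x)_{|_{x=0}}$ into $d\tau_{m,n}\big(\la_K(\check q_j)\big)$, the rewriting of the rotated operator via the equivariance of $F$, and the final collapse by Schur orthogonality all check out, and you correctly isolate the one substantive point: the ${\rm Ad}(K)$-invariance of $\sum_j q_j\otimes r_j$ is what makes $\Psi=\sum_jd\tau_{m,n}\big(\la_K(\check q_j)\big)r_j(\de)F$ equivariant and lets Schur orthogonality close the computation. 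Your restriction to invariant $p$ is in fact forced, not optional: for non-invariant $p$ the displayed identity fails already for $n=0$ (take $q=1$ and $r$ a linear coordinate; the left-hand side produces the $K$-average of the rotated operators $r(k\inv\de)$, which is $0$, while the right-hand side is $r(\de)\ne0$), so the lemma must be read, as in \cite{RS}, for $p\in\cP(\fu_2\times\bC^2)^{{\rm Ad}(K)}$ — which is the only case the paper ever uses.

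Two points deserve a cleaner line. First, both the homomorphism property and the kernel description tacitly use that every $D\in\bD(G)^{{\rm Int}(K)}$ preserves the space of $K$-central functions of $K$-type $\tau_{m,n}$; your parenthetical ("stable under left $K$-translations, with which the operators commute") is not yet an argument. The correct one-liner: $D$ commutes with all left translations and with the conjugations ${\rm Int}(k)$, hence with right translations by elements of $K$, hence with the two-sided $K$-projection defining centrality and $K$-type. Second, your kernel argument directly yields that $A_{m,n}DA_{m,n}\inv=0$ iff $D$ annihilates all \emph{$K$-central} functions of type $\tau_{m,n}$; to get the statement as worded (all functions of type $\tau_{m,n}$) add one more step: for $f$ of type $\tau_{m,n}$ and $g\in G$, the conjugation-average of the left translate $L_gf$ is central of type $\tau_{m,n}$, and evaluating $D$ of it at the identity, using that $D$ commutes with $L_g$ and with ${\rm Int}(k)$, gives $Df(g\inv)=0$ for every $g$.
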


In particular, this lemma
establishes the correspondence 
\bea\label{AmnD}
 D_1& \longleftrightarrow  \Delta_z\otimes I\\
 D_2& \longleftrightarrow i\big(\Delta_{z_2}-\Delta_{z_1}\big)\otimes d\tau_n(X_1)-4\big(\de_{\bar z_1}\de_{z_2}-\de_{z_1}\de_{\bar z_2}\big)\otimes d\tau_n(X_2)\\
  &\qquad\qquad\qquad\qquad\qquad\qquad\qquad +4i\big(\de_{z_1}\de_{\bar z_2}+\de_{\bar z_1}\de_{z_2}\big)\otimes d\tau_n(X_3)\\
 D_3& \longleftrightarrow d\tau_{m,n}(\Cas)=1\otimes(n^2+2n)I \\
  D_4& \longleftrightarrow d\tau_{m,n}(iX_4)=1\otimes mI\ .
\eea

We can then conclude that $\big(\bD(\bC^2)\otimes {\rm End}(\spaziorp_n)\big)^{K}$ is generated by the operators~$A_{m,n}D_1A_{m,n}^{-1}$ and $A_{m,n}D_2A_{m,n}^{-1}$, which we simply denote by $\Delta_z$ and $\DD_n$ respectively (since $\baseuno,\basedue,\basetre\in\fs\fu_2$, the operator $A_{m,n}D_2A_{m,n}^{-1}$ does not depend on $m$). We remark that they are not free generators.

\begin{lemma} \label{l:baseoperatdiff}
The operators $\Delta_z^j\DD_n^k$ with $j\in\bN$ and $0\le k\le n$ form a basis of $\big(\bD(\bC^2)\otimes {\rm End}(\spaziorp_n)\big)^{K}$.
\end{lemma}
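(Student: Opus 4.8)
The plan is to deduce the statement from a single monic polynomial identity satisfied by $\DD_n$ over the central subalgebra $\bC[\Delta_z]$, extracted from the Fourier symbol of $\DD_n$. (For $n=0$ the assertion is trivial, so assume $n\ge 1$.)

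First I would pass to Fourier transforms on $\bC^2$. The full‑symbol map $D\mapsto\sigma(D)$ identifies $\bD(\bC^2)\otimes{\rm End}(\spaziorp_n)$ with the algebra of ${\rm End}(\spaziorp_n)$‑valued polynomials on the dual space; it is injective and multiplicative, so identities among operators may be verified on symbols. One has $\sigma(\Delta_z)(\xi)=a|\xi|^2 I$ for a nonzero constant $a$, and, inspecting the formula for $\DD_n$ in \eqref{AmnD}, $\sigma(\DD_n)(\xi)=c\,d\tau_n\big(M(\xi)\big)$, where $c\neq 0$ is a constant and $\xi\mapsto M(\xi)\in\liesu$ is the $\liesu$‑valued quadratic map read off from the three coefficient operators (it is, up to the Fourier normalisation, the quadratic map associated with the $U_2$‑invariant $p_2=z^*Xz$). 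A direct computation gives $\det M(\xi)=a'|\xi|^4$ with $a'>0$; hence, for $\xi\neq 0$, $M(\xi)$ lies in the ${\rm Ad}(SU_2)$‑orbit of $\sqrt{a'}\,|\xi|^2\,\baseuno$, since in $\liesu\cong\bR^3$ the orbits of ${\rm Ad}(SU_2)$ are exactly the level sets of $\det$. Therefore $\sigma(\DD_n)(\xi)$ is conjugate to $c\sqrt{a'}\,|\xi|^2\,d\tau_n(\baseuno)$, and so is diagonalisable with the $n+1$ pairwise distinct eigenvalues $\mu_j|\xi|^2$, $j=0,\dots,n$, where $\mu_j=c\sqrt{a'}\,(n-2j)i$ — the $(n-2j)i$ being the weights of $\tau_n$, read off from $d\tau_n(\baseuno)$.

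Granting this, both halves follow formally. \emph{Spanning}: by Cayley--Hamilton, $\prod_{j=0}^n\big(\sigma(\DD_n)(\xi)-\mu_j|\xi|^2 I\big)=0$ for all $\xi$; substituting $|\xi|^2 I=a^{-1}\sigma(\Delta_z)(\xi)$ and transporting back through the symbol map gives the monic identity $P(\DD_n)=0$ with $P(T)=\prod_{j=0}^n\big(T-a^{-1}\mu_j\,\Delta_z\big)$, of degree $n+1$ in $T$ and with coefficients in $\bC[\Delta_z]$. Since $\Delta_z$ and $\DD_n$ generate the algebra and $\Delta_z$ is central, every element is a $\bC$‑linear combination of the $\Delta_z^j\DD_n^k$; using $P(\DD_n)=0$ to rewrite $\DD_n^{n+1}$, and inductively every higher power, in terms of $1,\DD_n,\dots,\DD_n^n$ over $\bC[\Delta_z]$, one gets that $\{\Delta_z^j\DD_n^k:j\in\bN,\ 0\le k\le n\}$ spans. \emph{Linear independence}: if $\sum_{k=0}^n q_k(\Delta_z)\DD_n^k=0$ with $q_k\in\bC[T]$, take symbols at a fixed $\xi\neq 0$ and pair against an eigenvector of $\sigma(\DD_n)(\xi)$ for $\mu_{k_0}|\xi|^2$; this gives $\sum_{k=0}^n q_k(a|\xi|^2)\,\mu_{k_0}^k\,|\xi|^{2k}=0$ for $k_0=0,\dots,n$, a Vandermonde system in the quantities $q_k(a|\xi|^2)|\xi|^{2k}$ with the distinct nodes $\mu_{k_0}$. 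Hence $q_k(a|\xi|^2)=0$ for all $k$ and all $\xi\neq 0$, so each $q_k$ vanishes identically.

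The main obstacle is Step 1: recognising that the principal part of $\DD_n$ is $d\tau_n$ of an $\liesu$‑valued quadratic form whose determinant is a positive multiple of $|\xi|^4$, so that — by transitivity of ${\rm Ad}(SU_2)$ on determinant level sets in $\liesu$ and the knowledge of the weights of $\tau_n$ — the eigenvalues of $\sigma(\DD_n)(\xi)$ collapse onto the $n+1$ distinct values $\mu_j|\xi|^2$. Once this eigenvalue pattern is in hand, Cayley--Hamilton supplies the spanning relation and a Vandermonde determinant supplies linear independence, with no further input.
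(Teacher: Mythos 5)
Your proposal is correct and takes essentially the same route as the paper's proof: pass to the Fourier symbol, obtain the spanning relation from Cayley--Hamilton with coefficients that are $K$-invariant polynomials (hence polynomials in $|\zeta|^2$, i.e.\ in the symbol of $\Delta_z$), and obtain linear independence from the fact that the symbol of $\DD_n$ has $n+1$ distinct eigenvalues. The only difference is that you make the eigenvalue pattern $(-n+2j)|\zeta|^2$ explicit via the identity $\det M(\zeta)=a'|\zeta|^4$ (which indeed checks out, since $(|\zeta_1|^2-|\zeta_2|^2)^2+4|\zeta_1|^2|\zeta_2|^2=|\zeta|^4$, and which the paper only recovers later in Corollary~\ref{c:gen} and Proposition~\ref{spettro}), whereas the paper simply invokes $K$-invariance of the characteristic coefficients and checks distinctness of the eigenvalues at the single point $\zeta=(0,1)$, where the symbol is $i\,d\tau_n(\baseuno)$.
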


\begin{proof}
Taking Fourier transform in $z$, the symbol $\widehat\DD_n$ of $\DD_n$ can be expressed as an $(n+1)\times(n+1)$ matrix with polynomial entries in the dual variable $\zeta$. The coefficients $q_{n,k}$ of the characteristic equation 
\be\label{characteristic}
\det\big(\la I-\widehat \DD_n(\zeta)\big)=\la^{n+1}+\sum_{k=0}^n \la^kq_{n,k}(\zeta)=0
\ee 
are $K$-invariant polynomials, hence polynomials $p_{n,k}$ in $|\zeta|^2$.
Applying the Cayley-Hamilton theorem and undoing Fourier transform,
$$
\DD_n^{n+1}=\sum_{k=0}^n \DD_n^kp_{n,k}(\Delta_z)\ .
$$

It remains to prove that $\widehat \DD_n$ 
 does not solve any equation of smaller degree in $\la$ than \eqref{characteristic}. This follows from the fact that $\widehat\DD_n(0,1)=i d\tau_n(X_1)$ has $n+1$ distinct eigenvalues.
\end{proof}

Following \cite{RS, ADR3}, the characters of $L^1_{\tau_{m,n}}(G)^{{\rm Int}(K)}$ are given by integration against the bounded spherical functions of $K$-type $\tau_{m,n}$.

Applying Lemma \ref{f<->F}, one can see that the characters of $L^1\big(\bC^2,{\rm End}(\spaziorp_n)\big)^K$ have the form
\be\label{matrix-sph}
F\longrightarrow (n+1)\int_{\bC^2}\tr\big(F(z)A_{m,n}\ph(-z)\big)\,dz\ ,
\ee
where $\ph$ is a spherical function of the strong Gelfand pair $(G,K)$ of $K$-type $\tau_{m,n}$.

\begin{definition}\label{defsferichematr}
We call ${\rm End}(\spaziorp_n)${\it -valued spherical functions}\footnote{More frequently, cf. \cite[vol. II, ch. 6]{War}, these are called $\tau_{m,n}$-spherical functions and the scalar-valued ones {\it spherical trace} functions.
} the functions
$$
\Phi(z)=(n+1)^2(A_{m,n}\ph)(z)\ ,
$$
where $\ph$ is a spherical function of the strong Gelfand pair $(G,K)$ of $K$-type $\tau_{m,n}$.
\end{definition}

The factor $(n+1)^2$ is due to the normalization $\Phi(0)=I$.

For what has been said above, the definition does not depend on $m$. Moreover,
\begin{itemize}
\item the ${\rm End}(\spaziorp_n)$-valued spherical functions are characterized by the property of being the joint eigenfunctions~$\Phi$ of $\Delta_z$ and $\DD_n$ which are $K$-equivariant, with $\Phi(0)=I$;
\item the bounded ${\rm End}(\spaziorp_n)$-valued spherical functions are the $K$-equivariant functions $\Phi$ which define nontrivial multiplicative functionals on $L^1\big(\bC^2,{\rm End}(\spaziorp_n)\big)^K$ through the formula
\be\label{Gn}
F\longmapsto \frac1{n+1}\int_{\bC^2}\tr\big(F(z)\Phi(-z)\big)\,dz\ .
\ee
\end{itemize}

\begin{remark}\label{type0}
For $n=0$, and in particular for the trivial representation $\tau_{0,0}$, Lemma \ref{f<->F} establishes the trivial fact, valid for every semidirect product $G=K\ltimes H$, that $K$-invariant (scalar) functions on $H$ coincide with  restrictions to $\{e_K\}\times H$ of bi-$K$-invariant functions on $G$ and that, via this identification, $L^1(K\backslash G/K)$ is isomorphic to $L^1(H)^K$. Similarly \eqref{Gn} is coherent with the fact that $A_{m,0}$ identifies the spherical functions of the (non-strong) Gelfand pair $(G,K)$ with the ${\rm End}(V_0)$-valued spherical function on $\bC^2$.
\end{remark}

\bigskip

\section{Embedded spectra and Schwartz correspondence}

Given a bounded spherical function $\ph$ of the strong Gelfand pair $(G,K)=\big(M_2(\bC),U_2\big)$, we denote by $\xi(\ph)$ the quadruple $(\xi_1,\xi_2,\xi_3,\xi_4)$ of eigenvalues of $\ph$ with respect to $D_1,D_2,D_3,D_4$ respectively.

With $\cD=\{D_1,D_2,D_3,D_4\}$, we denote by $\Sigma_\cD$ the ``embedded'' spectrum of the strong Gelfand pair,
$$
\Sigma_\cD=\big\{\xi(\ph):\ph\in\Sigma\big\}\ .
$$

Conversely, given $\xi\in\Sigma_\cD$, we denote by $\ph_\xi$ the spherical function such that $\xi(\ph_\xi)=\xi$.

It follows from \cite[Lemma 4.1]{ADR3} that $\Sigma_\cD\subset\bR^4$ and is closed. By \eqref{AmnD}, if $\ph$ is of $K$-type $\tau_{m,n}$ then
$$
\xi(\ph)=(\xi_1,\xi_2,n^2+2n,m)\ .
$$

In accordance with \cite[Sect. 6]{ADR3}, we set
$$
\Sigma_\cD^n=\big\{(\xi_1,\xi_2):(\xi_1,\xi_2,n^2+2n,m)\in\Sigma_\cD\big\}\ ,
$$
which is independent of $m$ by \eqref{m-iso}. Then, recalling that $E=\big\{(m,n):n\ge0\,,\, n-m\in2\bZ\big\}$,
\be\label{dec-sigma}
\Sigma_\cD=\bigcup_{(m,n)\in E}\Sigma_\cD^n\times\big\{(n^2+2n,m)\big\}\ .
\ee

Therefore, coherently with the notation in subsection~\ref{trasformate}, we shall consider    the spherical transform $\cGmn f$, of $f\in L^1_{\tau_{m,n}}(G)^{{\rm Int}(K)}$,  as a function defined   on   $\Sigma_\cD^n\subset\bR^2$.

In order to prove condition (S') in Theorem \ref{rendiconti}, in the next sections we prove Schwartz correspondence for each $K$-type spherical transform $\cG_{\tau_{m,n}}$. 

In doing so, it is convenient to adopt the ${\rm End}(\spaziorp_n)$-valued model, replacing $\cS_{\tau_{m,n}}(G)^{{\rm Int}(K)}$ with $\cS\big(\bC^2,{\rm End}(\spaziorp_n)\big)^K$. 
This allows us to take advantage of the algebraic structure of ${\rm End}(\spaziorp_n)$ and to completely disregard the parameter $m$.

By Lemma~\ref{symmetrization} and \cite[Cor. 2.3 and following remarks]{RS}
the first two components
$\xi_1,\xi_2$ of $\xi(\ph)$, with $\ph$ spherical of $K$-type $\tau_{m,n}$,  are the eigenvalues of $\Phi=(n+1)^2A_{m,n}\ph$ 
under the action of $\Delta_z$ and $\DD_n$ respectively.

For better clarity, we will use the slightly different notation $\cG_n$ for the spherical transform of $L^1\big(\bC^2,{\rm End}(\spaziorp_n)\big)^K$. In the following sections, ${\rm End}(V_n)$-valued spherical functions $\Phi$ will be labeled with parameters identifying the pair of its eigenvalues $(\xi_1,\xi_2)\in\Sigma^n_\cD$.

\bigskip

\section{${\rm End}(\spaziorp_n)$-valued functions}

In this section we start studying ${\rm End}(\spaziorp_n)$-valued $K$-equivariant functions.
From Lemma~\ref{l:baseoperatdiff}, applying the Fourier transform in the $\bC^2$ variable, 
one can deduce that a basis for $K$-equivariant polynomials is given by 
\[
\{z\mapsto |z|^{2j} \,\widehat\DD_n^k(z)\, :\, j\in \bN,\quad k=0,\ldots, n\}\ .
\]
Our first task will be to replace this basis with one compatible with the decomposition 
of ${\rm End}(\spaziorp_n)$ into its $\tilde\tau_n$-invariant irreducible subspaces.

\subsection{Decomposition of ${\rm End}(\spaziorp_n)$}
\quad

Restricting to $SU_2$ the representation $\tilde\tau_n$ defined in Remark \ref{tildetaun}, as we can do by triviality on the center, we have
$$
\tilde\tau_n=\tau'_{n}\otimes\tau_{n}\,\sim\,\tau_{2n}\oplus \tau_{2n-2}\oplus\cdots\oplus\tau_2\oplus\tau_0\ ,
$$
and, correspondingly, 
\be\label{Pnn}
{\rm End}(\spaziorp_n) 
=\spaziW^n_n\oplus \spaziW^{n-1}_{n}\oplus\cdots\oplus \spaziW^0_n\ ,
\ee
where $\mathrm{dim}\,\spaziW^\ell_n=2\ell+1$.

\medskip
Since 
$$
B_n^1=id\tau_n(\baseuno )={\rm diag}(-n,\dots,-n+2\ell,\dots,n),
$$
has distinct eigenvalues, any diagonal matrix $B={\rm diag}(b_0,b_1,\dots,b_n)$, can be written as  a polynomial in $B_n^1$. Indeed, there is a unique polynomial $p$ of degree at most $n$ such that $p(-n+2\ell)=b_\ell$, $\ell=0,\ldots, n$. Then $B=p\big(B_n^1\big)$.

\begin{lemma}\label{diagonalWnj}
For every $\ell=0,\ldots ,n$, let $\diagonali^\ell_n$ be the subspace of diagonal matrices in $\spaziW^\ell_n$.
\begin{enumerate}[\rm(i)]
\item 
The subspace $\diagonali^\ell_n$ is one-dimensional. 
\item The subspace $\diagonali^1_n$ consists of the scalar multiples of $B_n^1$. 
\item For general~$\ell$, there is a monic polynomial $q_n^\ell$ of degree $\ell$ such that $\diagonali^\ell_n$ consists of the scalar multiples of the matrix
\begin{equation}\label{e:matriciBj}
B_n^\ell=q_n^\ell(B_n^1).
\end{equation}
\item For a polynomial $p$ of degree at most $n$, $p(B_n^1)\in \sum_{\ell\le j}\spaziW_n^\ell$ if and only if $\deg(p)\le j$.
\end{enumerate}
\end{lemma}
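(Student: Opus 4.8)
The statement decomposes naturally into parts (i)--(iv), and the key structural fact underlying all of them is that $B_n^1 = i\,d\tau_n(\baseuno)$ has $n+1$ distinct eigenvalues $-n, -n+2, \dots, n$, so that the map $p \mapsto p(B_n^1)$ is a linear isomorphism from polynomials of degree $\le n$ (modulo nothing, since there are exactly $n+1$ such values) onto the algebra $\cD_n$ of diagonal $(n+1)\times(n+1)$ matrices. The plan is to carry out the following steps in order.

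First I would identify the group $T$ of diagonal unitary matrices $\exp(\theta \baseuno)$ in $SU_2$ (together with the center) and observe that a matrix $A \in {\rm End}(\spaziorp_n)$ is diagonal if and only if it commutes with $\tilde\tau_n(T)$, i.e. it lies in the sum of the weight-zero spaces of the irreducible pieces $\spaziW^\ell_n \cong \tau_{2\ell}$ appearing in \eqref{Pnn}. Since each $\tau_{2\ell}$ has a one-dimensional zero-weight space, this gives $\dim \cD_n = n+1$ and shows immediately that $\diagonali^\ell_n := \cD_n \cap \spaziW^\ell_n$ is exactly one-dimensional for each $\ell = 0,\dots,n$; this is part (i). Part (ii) is the observation that $\spaziW^1_n$ is the unique copy of $\tau_2$, which is the image of $\liesu$ under $d\tau_n$ composed with the adjoint-type action, and $\diagonali^1_n$ is spanned by $d\tau_n(\baseuno)$, hence by $B_n^1$.

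For parts (iii) and (iv) I would use the isomorphism $p \mapsto p(B_n^1)$ to transport the filtration $\spaziW^0_n \subset \spaziW^0_n \oplus \spaziW^1_n \subset \cdots$ intersected with $\cD_n$ back to the space of polynomials. Define $\cF_j := \big(\bigoplus_{\ell \le j}\spaziW^\ell_n\big) \cap \cD_n$; by part (i) this is a $(j+1)$-dimensional subspace of $\cD_n$, hence corresponds under the isomorphism to a $(j+1)$-dimensional subspace $\cP_j$ of polynomials of degree $\le n$. The content of (iv) is precisely that $\cP_j$ is the space of polynomials of degree $\le j$. I would prove this by induction on $j$: $\cP_0$ is the constants (the $\spaziW^0_n$ piece is the scalar matrices, by Schur, since $\spaziW^0_n \cong \tau_0$). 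For the inductive step, note $\cF_j \supset \cF_{j-1}$ with codimension one, and one must show the extra direction is realized by a polynomial of exact degree $j$. The cleanest way: show directly that $B_n^j := (B_n^1)^j$ has a nonzero component in $\spaziW^j_n$ — equivalently, that the projection of $(B_n^1)^j$ onto $\spaziW^j_n$ does not vanish. This follows because $(B_n^1)^j = d\tau_n(\baseuno)^j$ (up to the power-of-$i$ normalization) expands, via the highest-weight-$2$ structure of the $\liesu$-action, into a sum with a genuine top piece in the $\tau_{2j}$-component; alternatively one can argue by dimension count that if $(B_n^1)^j$ lay in $\bigoplus_{\ell<j}\spaziW^\ell_n = $ image of $\cP_{j-1}$, then the polynomials $1, B_n^1, \dots, (B_n^1)^{j}$ would span a space of dimension $\le j$, contradicting that $1, t, \dots, t^j$ are linearly independent and $p \mapsto p(B_n^1)$ is injective on degree $\le n$. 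This immediately gives both (iv) and then (iii): taking $q_n^\ell$ to be the unique monic degree-$\ell$ polynomial whose image $q_n^\ell(B_n^1)$ lies in $\spaziW^\ell_n$ (it exists and is unique because $\cF_\ell = \cP_\ell$ meets $\cF_{\ell-1} = \cP_{\ell-1}$ in codimension one and a monic degree-$\ell$ polynomial is determined up to lower-order terms, which are pinned down by requiring the image be orthogonal — in the Hilbert--Schmidt sense — to $\bigoplus_{\ell'<\ell}\spaziW^{\ell'}_n$), and setting $B_n^\ell = q_n^\ell(B_n^1)$.

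\textbf{Main obstacle.} The one genuinely nontrivial point is showing that the degree filtration on polynomials corresponds exactly to the filtration $\cF_j$ coming from the $\tilde\tau_n$-isotypic pieces — i.e. that passing to higher powers of $B_n^1$ does pick up the higher $\spaziW^\ell_n$ components and nothing is "skipped." Everything else (dimension counts, Schur's lemma for $\spaziW^0_n$, existence/uniqueness of $q_n^\ell$ by orthogonalization) is routine once this is in place. I expect the slick argument to be the dimension/injectivity count in the previous paragraph: if $(B_n^1)^\ell$ had zero component in $\spaziW^\ell_n$, it would lie in $\cP_{\ell-1}$-image, but $p \mapsto p(B_n^1)$ injective on $\deg \le n$ forces $1, B_n^1,\dots,(B_n^1)^\ell$ to remain linearly independent, contradicting that they would then lie in an $\ell$-dimensional space. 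This turns the potentially fiddly representation-theoretic computation into pure linear algebra, which I would favor for the writeup.
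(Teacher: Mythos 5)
Your parts (i) and (ii) are fine and coincide with the paper's argument (diagonal matrices are those commuting with $B_n^1$, each $\tau_{2\ell}$ has a one-dimensional zero-weight space; the span of the $K$-conjugates of $d\tau_n(\baseuno)$ is $d\tau_n(\fs\fu_2^\bC)$, hence equals $\spaziW_n^1$). The gap is in the step you single out as the "main obstacle" for (iii)--(iv). What the induction needs is the containment $(B_n^1)^j\in\spaziW_n^0\oplus\cdots\oplus\spaziW_n^j$, i.e.\ that multiplying by $B_n^1$ climbs at most one step in the isotypic filtration, so that $t^j\in\cP_j$ and the extra direction of $\cF_j$ over $\cF_{j-1}$ is realized by a polynomial of degree exactly $j$. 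Your preferred "pure linear algebra" dimension count does not deliver this: injectivity of $p\mapsto p(B_n^1)$ only shows that $(B_n^1)^j\notin\bigoplus_{\ell\le j-1}\spaziW_n^\ell$, i.e.\ that some component at level $\ge j$ is nonzero. It does not exclude nonzero components of $(B_n^1)^j$ in $\spaziW_n^\ell$ with $\ell>j$, and a priori the two complete flags in the $(n+1)$-dimensional space of diagonal matrices (the degree flag and the isotypic flag) could share only their bottom piece; abstract linear algebra cannot link them. Likewise, nonvanishing of the projection of $(B_n^1)^j$ onto $\spaziW_n^j$ alone would not "immediately give (iv)": you would still need the absence of higher components.

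The missing ingredient is exactly the representation-theoretic input you mention in passing and then set aside: matrix multiplication in ${\rm End}(\spaziorp_n)$ is $K$-equivariant, so the product map $\spaziW_n^1\otimes\spaziW_n^{\ell-1}\to{\rm End}(\spaziorp_n)$ intertwines $\tau_2\otimes\tau_{2\ell-2}\sim\tau_{2\ell}\oplus\tau_{2\ell-2}\oplus\tau_{2\ell-4}$, and since each $\tau_{2k}$ occurs in ${\rm End}(\spaziorp_n)$ with multiplicity one, $B_n^1\cdot\spaziW_n^{\ell-1}\subset\spaziW_n^\ell\oplus\spaziW_n^{\ell-1}\oplus\spaziW_n^{\ell-2}$. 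This Clebsch--Gordan step is precisely how the paper runs the induction: $t\,q_n^{\ell-1}(t)$ evaluated at $B_n^1$ lands in $\sum_{k\le\ell}\spaziW_n^k$, its component in $\spaziW_n^\ell$ is nonzero by the inductive hypothesis (here your injectivity count does its job), and subtracting that component leaves, again by induction, a polynomial of degree $\le\ell-1$, which yields the monic $q_n^\ell$ and both (iii) and (iv). So your plan is repairable, but only by turning the vague "highest-weight-$2$ structure" aside into this equivariance/Clebsch--Gordan argument; as written, the argument you favor proves only the "only if" half of (iv) and leaves the "if" half, which is the real content, unproved.
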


\begin{proof}
Since $d\tau_n(\baseuno )$ has distinct eigenvalues, diagonal matrices are those that commute with $d\tau_n(\baseuno )$.  
Since  $\tilde\tau_{n}$ restricted to $\spaziW^\ell_n$ is equivalent to $\tau_{2\ell}$, it contains the null weight with multiplicity one, and this proves (i).
 
Consider now the $\tilde\tau_n$-invariant subspace $\spaziW$ generated by $d\tau_n(\baseuno )$. Then
$$
\spaziW=\spaz_\bC\big\{\tau_n(k)d\tau_n(\baseuno )\tau_n(k)\inv\, :\,k\in \grpSU\big\}=d\tau_n(\fs\fu_2^\bC) \ ,
$$
which is a 3-dimensional invariant subspace. So it must coincide with the component $\spaziW_n^1$ in~\eqref{Pnn}. This proves (ii). 

The statements  (iii) and (iv) are trivial for $\ell=0$ and have been proved above for $\ell=1$. 
For  $\ell\geq 2$, item (iv) follows by induction from the fact that $\tilde\tau_n|_{\spaziW_n^{\ell-1}}\sim \tau_{2\ell-2}$ and the decomposition
\[
\tau_{2\ell-2}\otimes \tau_2\sim \tau_{2\ell}\oplus \tau_{2\ell-2} \oplus \tau_{2\ell-4}
\]
in irreducible summands with multiplicity one for  $\tau_{2\ell}$.
Finally, if $p(t)=t\, q_n^{\ell-1}(t)$, then $p$ is monic, has degree $\ell$ and $p(B_n^1)$ is a diagonal matrix in 
$\spaziW_n^\ell\oplus\cdots\oplus \spaziW_n^0$ with a nontrivial component in $\spaziW_n^\ell$, that we call $q_n^\ell(B_n^1)$. Hence $(p-q_n^\ell)(B_n^1)$ is in $\sum_{j\in L} \spaziW_n^j$, which, by the inductive hypothesis, is a polynomial in $B_n^1$ of degree at most $\ell-1$. Then $q_n^\ell$ has degree $\ell$ and its leading term is the same as $p$, so it is monic. 
\end{proof}

\subsection{Equivariant polynomials}
\quad

Suppose now $P:\bC^2\to {\rm End}(\spaziorp_n)$ is a $\grpU$-equivariant polynomial. Then the homogeneous component of $P$ of bi-degree $(d_1,d_2)$
 is also equivariant, and trivial if $d_1\ne d_2$.

Assume therefore that $P\in\cP^{d,d}$, i.e., homogeneous of bi-degree $(d,d)$. By homogeneity, $P$ is uniquely determined by its restriction to the unit sphere and, by equivariance,
\bea\label{P(base)}
P(z)&= \tau_n(k_{z})P(\base)\tau_n(k_{z}\inv) \qquad \forall z\in \bC^2\ ,\ |z|=1\ ,\\
P(\base)&=P(e^{i\theta} \base) \qquad \forall \theta\in \bR\ ,\\
P(\base)&=\tau_n(\exp(t \baseuno )) \,P (\base)\, \tau_n(\exp(t \baseuno ))\inv\qquad \forall t\in \bR\ 
\eea
where $\base$ is the base point $(0,1)$ and $k_z\in \grpSU$ is such that $k_z\base=z$.
The diffeomorphism 
\begin{equation}\label{e:id-sfera-su2}
 z=(z_1,z_2)\longmapsto k_z=
 \begin{bmatrix}
\bar z_2&z_1\\-\bar z_1&z_2
\end{bmatrix}
\end{equation}
identifies  the  unit sphere  $S^3$  with the group $\grpSU$.

We conclude that
the matrix $B=P(\base)$ is diagonal 
and, setting $z'=z/|z|$ for $z\ne(0,0)$, 
\be\label{P->B}
P(z)=|z|^{2d}\tau_n(k_{z'})B\tau_n(k_{z'})^*\ .
\ee

Conversely, given $B={\rm diag}(b_0,b_1,\dots,b_n)$, we will determine for what values of $d$ formula~\eqref{P->B} defines a polynomial. The answer to this question goes together with the issue of describing the equivariant polynomials taking values in a given $\spaziW_n^\ell$. In this respect the following remarks are quite obvious, after Lemma \ref{diagonalWnj}, for an ${\rm End}(\spaziorp_n)$-valued equivariant polynomial $P$:
\begin{itemize}
\item $P$ can be uniquely decomposed as the sum of $\spaziW_n^\ell$-valued ones;
\item $P$ takes values in $\spaziW_n^\ell$ if and only if $P(\base)\in \spaziW_n^\ell$.
\end{itemize}

 For $B={\rm diag}(b_0,b_1,\dots,b_n)$ we denote by $d(B)\le n$ the degree  of the polynomial $p$ such that $B=p(B_n^1)$. 
 From Lemma~\ref{diagonalWnj}, it follows that $d(B)\le j$ if and only if $B\in \sum_{\ell\le j}\spaziW_n^\ell$.
 
 For $B$ as above,
 and $d$ in $\bN$, we set
 $$
Q_{B}^d(z)= |z|^{2d}\tau_n(k_{z'})B\tau_n(k_{z'})^*\ ,\qquad z=|z|z'\ .
$$

\begin{lemma}\label{l:estensione-matricediag}
\quad
\begin{enumerate}[\rm(i)]
\item For a diagonal $B$, the function $Q^d_{B}$
can be continued to an ${\rm End}(\spaziorp_n)$-valued equivariant polynomial if and only if $d\ge d(B)$. In this case, the polynomial $Q^d_{B}$ is homogeneous of bi-degree $(d,d)$.
\item Every $\spaziW_n^\ell$-valued equivariant polynomial has the form $ p\big(|z|^2\big)Q^\ell_{B_n^\ell} (z)$, where $p$ is a scalar-valued polynomial in one variable.
\end{enumerate}
\end{lemma}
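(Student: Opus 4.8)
The plan is to analyze the candidate $Q_B^d$ separately in each $\tilde\tau_n$-isotypic component, reducing everything to the one-dimensional spaces $\diagonali^\ell_n$ of diagonal matrices inside $\spaziW_n^\ell$, where Lemma~\ref{diagonalWnj} gives complete information. First I would address part (ii), which is essentially a restatement of the structure already uncovered. A $\spaziW_n^\ell$-valued equivariant polynomial $P\in\cP^{d,d}$ is determined by $B=P(\base)\in\diagonali^\ell_n$, and by Lemma~\ref{diagonalWnj}(iii) such $B$ is a scalar multiple of $B_n^\ell=q_n^\ell(B_n^1)$, a diagonal matrix with $d(B_n^\ell)=\ell$. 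So $P=c\,Q_{B_n^\ell}^d$ for some scalar $c$; once part (i) is known, the requirement that $Q_{B_n^\ell}^d$ actually be polynomial forces $d\ge\ell$, and then $Q_{B_n^\ell}^d=|z|^{2(d-\ell)}Q_{B_n^\ell}^\ell$. Collecting the various bi-degrees $d=\ell,\ell+1,\dots$ with scalar coefficients assembles precisely into $p(|z|^2)Q_{B_n^\ell}^\ell(z)$ with $p$ a one-variable polynomial. The summation-over-$\ell$ splitting of a general equivariant polynomial is the bulleted remark preceding the lemma, so (ii) follows once (i) is in hand.

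For part (i), the necessity of $d\ge d(B)$ is the easy direction: if $Q_B^d$ extends to a polynomial it extends to one in $\cP^{d,d}$ (comparing degrees of homogeneity away from the origin, using that $Q_B^d(z)=|z|^{2d}(\text{unit-vector data})$), hence by the decomposition into $\spaziW_n^\ell$-components the $\spaziW_n^\ell$-part of $B$ must itself be extendable in bi-degree $(d,d)$; since $B\in\sum_{\ell\le d(B)}\spaziW_n^\ell$ with a nontrivial top component in $\spaziW_n^{d(B)}$, one needs the analogous statement that $\diagonali^\ell_n$ does not arise as $P(\base)$ for $P\in\cP^{d,d}$ when $d<\ell$, which I will get from a weight/degree count. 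For sufficiency, assume $d\ge d(B)$. Write $B=p(B_n^1)$ with $\deg p\le d(B)\le d$. The key computational input is to identify $Q_{B_n^1}^d$, the case $B=B_n^1=id\tau_n(\baseuno)$: here $\tau_n(k_{z'})\,d\tau_n(\baseuno)\,\tau_n(k_{z'})^*=d\tau_n\big(\mathrm{Ad}(k_{z'})\baseuno\big)$, and since $\mathrm{Ad}(SU_2)\baseuno$ sweeps out the unit sphere in $\fs\fu_2\cong\bR^3$, the entries of $\mathrm{Ad}(k_{z'})\baseuno$ are real-homogeneous of degree $0$ in $z'$, i.e. homogeneous of bi-degree $(1,1)$ in $z$ after multiplication by $|z|^2$ — explicitly polynomial in $z_1,\bar z_1,z_2,\bar z_2$ via the identification \eqref{e:id-sfera-su2}. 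Thus $Q_{B_n^1}^1$ is a genuine equivariant polynomial of bi-degree $(1,1)$; in fact it is (a multiple of) the symbol $\widehat\DD_n$, up to the Fourier identification used at the start of Section~6. Then for general $B=p(B_n^1)=\sum_{a=0}^{d(B)}c_a (B_n^1)^a$ one has
$$
Q_B^d(z)=|z|^{2d}\sum_{a=0}^{d(B)}c_a\,\big(\tau_n(k_{z'})B_n^1\tau_n(k_{z'})^*\big)^a=\sum_{a=0}^{d(B)}c_a\,|z|^{2(d-a)}\big(Q_{B_n^1}^1(z)\big)^a,
$$
which is a polynomial precisely because each exponent $d-a\ge d-d(B)\ge0$ and $Q_{B_n^1}^1$ is polynomial. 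Homogeneity of bi-degree $(d,d)$ is then immediate from degree bookkeeping.

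I expect the main obstacle to be the necessity direction in (i) — pinning down that $d\ge d(B)$ is genuinely forced, not just sufficient. The cleanest route is the weight-vector argument: restricting to $SU_2$ and taking the $K_0=\exp(\bR\baseuno)$-action, $P(\base)$ lies in the zero-weight space of $\tilde\tau_n|_{\cP^{d,d}}$; writing $\cP^{d,d}(\bC^2)$ as an $SU_2$-module and counting how its zero-weight multiplicity distributes over the $\tau_{2\ell}$ in $\tilde\tau_n=\bigoplus_\ell\tau_{2\ell}$, one sees the $\spaziW_n^\ell$-component of any equivariant $P\in\cP^{d,d}$ must vanish for $\ell>d$. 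Equivalently, and perhaps more elementary, one observes that $Q_{B_n^\ell}^\ell$ is a bi-degree-$(\ell,\ell)$ equivariant polynomial (by the sufficiency argument just sketched applied to $B=B_n^\ell$, since $d(B_n^\ell)=\ell$) and that $Q_{B_n^\ell}^d/|z|^{2(d-\ell)}=Q_{B_n^\ell}^\ell$ is not a polynomial when $d<\ell$ because a nonzero equivariant polynomial of bi-degree $(d,d)$ cannot be divisible by $|z|^{2(d-\ell)}$ with $d-\ell<0$ — i.e., $Q_{B_n^\ell}^d$ would have to be homogeneous of negative bi-degree, impossible unless it is zero, contradicting $Q_{B_n^\ell}^d(\base)=B_n^\ell\ne0$. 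Either way the degree obstruction is exactly $d(B)$, completing (i) and hence, by the reduction above, (ii).
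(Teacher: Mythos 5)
Your proposal is correct, and for the heart of the lemma, part (i), it takes a genuinely different route from the paper. The paper transports $Q^d_{B_n^\ell}$ through a unitary intertwiner $\spaziW_n^\ell\to\spaziorp_{2\ell}$, identifies its value at $\base$ as the zero-weight vector $c\,w_1^\ell w_2^\ell$ in the polynomial model, and writes the candidate extension explicitly as $c\,|z|^{2(d-\ell)}(z_2w_1-z_1w_2)^\ell(\bar z_1w_1+\bar z_2w_2)^\ell$, reading off polynomiality if and only if $d\ge\ell$ in one stroke. You instead prove sufficiency by functional calculus: since $\tau_n(k)$ is unitary, $\tau_n(k_{z'})\,p(B_n^1)\,\tau_n(k_{z'})^*=p\big(\tau_n(k_{z'})B_n^1\tau_n(k_{z'})^*\big)$, hence $Q^d_B=\sum_a c_a\,|z|^{2(d-a)}\big(Q^1_{B_n^1}\big)^a$, where $Q^1_{B_n^1}(z)=|z|^2\, i\,d\tau_n\big(\mathrm{Ad}(k_{z'})\baseuno\big)$ is visibly a bi-degree $(1,1)$ polynomial (the symbol $\widehat{\DD}_n$); and you prove necessity by a Clebsch--Gordan count, $\cP^{d,d}\sim\tau_d\otimes\tau_d\sim\bigoplus_{j\le d}\tau_{2j}$, so that a nonzero equivariant $\spaziW_n^\ell$-valued polynomial of bi-degree $(d,d)$ would give a nonzero $SU_2$-intertwiner $\tau_{2\ell}\to\cP^{d,d}$, impossible for $\ell>d$. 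Both routes are sound; yours is more elementary, makes the role of the generator $\DD_n$ (cf. Corollary \ref{c:gen}) transparent, and the multiplicity-one count even gives uniqueness directly, while the paper's computation has the advantage of producing an explicit closed formula for the $\spaziW_n^\ell$-valued polynomials. Your treatment of (ii) and the reduction from a general diagonal $B$ to the matrices $B_n^\ell$ coincide with the paper's.

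One caveat: the ``equivalently, and perhaps more elementary'' second justification of necessity is a non sequitur and should be dropped. For $d<\ell$ the identity $Q^d_{B_n^\ell}=|z|^{2(d-\ell)}Q^\ell_{B_n^\ell}$ does not make $Q^d_{B_n^\ell}$ ``homogeneous of negative bi-degree'': it is homogeneous of bi-degree $(d,d)\ge 0$, and it fails to be a polynomial precisely because $|z|^{2(\ell-d)}$ does not divide $Q^\ell_{B_n^\ell}$ in the polynomial ring -- which is exactly the point at issue, and is what the multiplicity count (or the paper's explicit formula) actually establishes. So rely on the Clebsch--Gordan argument, carried out as indicated above, for the necessity direction.
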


\begin{proof}
(i) Since any diagonal matrix $B$ is a linear combination of the matrices $B_n^\ell$, it is enough to treat the case where $B=B_n^\ell=q_n^\ell(B^1_n)$.

Assume that, for a given $d$, $Q^d_{B_n^\ell}$ extends to a polynomial.
Recalling that $\spaziW_n^\ell\sim \spaziorp_{2\ell}$ and denoting by $\mathcal{I}_n^\ell:\spaziW_n^\ell\to \spaziorp_{2\ell}$ a unitary intertwining operator,
let $\tilde Q_d$ be the $\spaziorp_{2\ell}$-valued polynomial defined by the rule 
\[
\tilde Q_d(z)=\mathcal{I}_n^\ell(Q^d_{B_n^\ell}(z))\qquad \forall z\in \bC^2\ .
\]

Then $\tilde Q_d$ is $\tau_{2\ell}$-equivariant and it suffices to prove the necessity of the condition 
$d\ge \ell$ for $\tilde Q_d$. By \eqref{P(base)},  third line, 
\[
\tilde Q_d(\base)=\tau_{2\ell}(\exp{t\baseuno })\tilde Q_d(\base)\ ,
\]
so that $\tilde Q_d(\base)$ is a 0-weight vector for $\tau_{2\ell}$. In the polynomial model of Section~\ref{s:rep}, $\tilde Q_d(\base)\in\spaziorp_{2\ell}$ has then the form
\[
\big[\tilde Q_d(\base)\big](w)=c\, w_1^\ell\, w_2^\ell\ 
\]
for some constant $c$. 
It follows that, if $|z'|=1$,
\[
\big[\tilde Q_d(z')\big](w)= \big[\tau_{2\ell}(k_{z'})\tilde Q_d(\base)\big](w)=c\, (z'_2w_1-z'_1w_2)^\ell(\bar z'_1w_1+\bar z'_2w_2)^\ell\ ,
\]
and the homogeneous extension of bi-degree $(d,d)$ with $z$ in $\bC^2$ is
$$
\big[\tilde Q_d(z)\big](w)= c|z|^{2(d-\ell)}(z_2w_1-z_1w_2)^\ell(\bar z_1w_1+\bar z_2w_2)^\ell\ .
$$
It is a polynomial in $z$ if and only if $d\ge \ell$. 

As for ii), if $P$ is a polynomial, then $P$ is $\spaziW_n^\ell$-valued if and only if $P(\base)$ is in $\spaziW_n^\ell$. 
Therefore $P(\base)$ is a constant multiple of $B_n^\ell$.
\end{proof}

Lemma~\ref{l:estensione-matricediag} gives a recipe to find a new basis for $\big(\cP(\bC^2)\otimes{\rm End}(\spaziorp_n)\big)^K$
and
at the same time, proves that the operators $\Delta_z$ and $\DD_n$ generate the algebra
$\big(\mathbb{D}(\bC^2)\otimes \textup{End}(\spaziorp_n)\big)^K$, independently of~\cite{FRY1}.
We state
 these facts in Corollary~\ref{c:gen}, where we use the following notation. 
Let $\widehat F$ be the Fourier transform of  $F\in L^1\big(\bC^2,\textup{End}(\spaziorp_n)\big)^K$, defined component-wise by
\[
\widehat F(\zeta)=\int_{\bC^2} F(z)\, e^{-i\langle z,\zeta\rangle}\, dz
\]
and let $P$ be an $\textup{End}(\spaziorp_n)$-valued polynomial. Then $P(\partial)$ is the operator defined by the rule
\[
\widehat{P(\partial)F}(\zeta)=P(\zeta)\widehat F(\zeta)\qquad \zeta\in \bC^2.
\]
 In particular, if $P(z)=z^\alpha\, \bar z^\beta I$, then $P(\partial)= (-2i\partial_{\bar z})^\alpha\,(-2i\partial_{ z})^\beta I$.

\begin{corollary}\label{c:gen}

 \begin{itemize}
\item[\rm(i)] 
The polynomials   $Q^k_{B^\ell_n}$, where $\ell=0,1,\dots,n$  and $k\ge \ell$, form a basis of
 $\big(\cP(\bC^2)\otimes{\rm End}(\spaziorp_n)\big)^K$ as $\cP(\bC^2)^K$-module.
 \item[\rm(ii)] A set of generators of the algebra $\big(\mathbb{D}(\bC^2)\otimes \textup{End}(\spaziorp_n)\big)^K$
is  
$$
\cD=\{\Delta_z=Q^1_{B^0_n}(\partial),\quad\DD_n=Q^1_{B^1_n}(\partial)\}.
$$
\end{itemize}
\end{corollary}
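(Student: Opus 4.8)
The plan is to deduce Corollary~\ref{c:gen} as a formal consequence of Lemma~\ref{l:estensione-matricediag} together with the earlier lemmas on the decomposition of $\textup{End}(\spaziorp_n)$. First I would prove (i). Any $K$-equivariant polynomial $P:\bC^2\to\textup{End}(\spaziorp_n)$ decomposes into its homogeneous bi-degree $(d,d)$ components, each of which is equivariant, and each of those decomposes into $\spaziW_n^\ell$-valued pieces by Lemma~\ref{diagonalWnj} and the first bulleted remark after \eqref{P->B}. By Lemma~\ref{l:estensione-matricediag}(ii) each $\spaziW_n^\ell$-valued equivariant polynomial is $p(|z|^2)\,Q^\ell_{B_n^\ell}(z)$ for a scalar polynomial $p$ in one variable; multiplying by the appropriate power of $|z|^2$ and using $|z|^{2(k-\ell)}Q^\ell_{B_n^\ell}(z)=Q^k_{B_n^\ell}(z)$ (immediate from the definition of $Q^d_B$), every equivariant polynomial is a $\cP(\bC^2)^K=\bC[|z|^2]$-linear combination of the $Q^k_{B^\ell_n}$ with $\ell=0,\dots,n$ and $k\ge\ell$. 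For linear independence over $\cP(\bC^2)^K$: a relation $\sum_\ell p_\ell(|z|^2)\,Q^\ell_{B_n^\ell}(z)=0$ restricts at $z=\base$ to $\sum_\ell p_\ell(1)\,B_n^\ell=0$, and since the $B_n^\ell$ lie in distinct summands $\spaziW_n^\ell$ of \eqref{Pnn} (Lemma~\ref{diagonalWnj}(iii)) they are linearly independent, giving $p_\ell\equiv0$ after rescaling $|z|$; hence the $Q^k_{B^\ell_n}$ form a free basis.

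Next I would prove (ii). Taking Fourier transform in $z$ intertwines $\big(\mathbb D(\bC^2)\otimes\textup{End}(\spaziorp_n)\big)^K$ with $\big(\cP(\bC^2)\otimes\textup{End}(\spaziorp_n)\big)^K$ acting as multipliers (this is exactly the passage recalled before Corollary~\ref{c:gen}, and the equivariance conditions match because the Fourier transform commutes with the $U_2$-action up to the sign on $\zeta$, which is harmless as $K$-invariance of the relevant polynomials is under $|\zeta|^2$). So by (i) the algebra is spanned as a $\bC[\Delta_z]$-module by the operators $Q^k_{B^\ell_n}(\partial)$, $0\le\ell\le n$, $k\ge\ell$. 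It remains to see that the two operators $\Delta_z=Q^1_{B^0_n}(\partial)$ and $\DD_n=Q^1_{B^1_n}(\partial)$ generate all of these as an algebra. The identification $\Delta_z\leftrightarrow Q^1_{B^0_n}(\partial)$ is clear since $B^0_n=I$ and $Q^1_I(z)=|z|^2 I$, whose symbol gives $\Delta_z$ up to the fixed normalization of $\Delta_z$ recorded after \eqref{AmnD}; the identification $\DD_n\leftrightarrow Q^1_{B^1_n}(\partial)$ follows from \eqref{AmnD} because $\widehat\DD_n(0,1)=i\,d\tau_n(X_1)=B^1_n$ and $\DD_n$ is the unique (up to the chosen normalization) degree-$(1,1)$ equivariant $\spaziW_n^1$-valued operator, so its symbol must be a scalar multiple of $Q^1_{B^1_n}$ by Lemma~\ref{l:estensione-matricediag}(ii) with $\ell=1$, and matching at $\zeta=(0,1)$ pins the scalar to $1$. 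Finally, the products $\Delta_z^j\,\DD_n^k$ with $j\in\bN$, $0\le k\le n$ already form a basis by Lemma~\ref{l:baseoperatdiff}, so in particular every $Q^{k}_{B^\ell_n}(\partial)$ is an algebra-polynomial in $\Delta_z$ and $\DD_n$; hence $\cD=\{\Delta_z,\DD_n\}$ generates.

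The main obstacle I expect is the bookkeeping needed to match $\DD_n$ with $Q^1_{B^1_n}(\partial)$ precisely, including the normalization constants: one must check that the symbol of $\DD_n$ as given in \eqref{AmnD} is genuinely equivariant and $\spaziW_n^1$-valued (not just that it vanishes on the center), invoke Lemma~\ref{l:estensione-matricediag}(ii) to conclude it is a multiple of $Q^1_{B^1_n}$, and evaluate at $\zeta=(0,1)$ to fix the multiple — this is where one uses that $\widehat\DD_n(0,1)=i\,d\tau_n(X_1)=B^1_n$, already noted in the proof of Lemma~\ref{l:baseoperatdiff}. Everything else is a direct assembly of facts established in Lemmas~\ref{diagonalWnj}, \ref{l:baseoperatdiff} and \ref{l:estensione-matricediag}, so the corollary is essentially a packaging statement; I would keep the write-up short and point to those lemmas rather than reprove anything.
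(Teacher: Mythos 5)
Your argument for (i) is correct and is essentially the reading the paper intends: the corollary is stated without proof precisely because it is a packaging of the bi-degree decomposition, the two bulleted remarks after \eqref{P->B}, and Lemma~\ref{l:estensione-matricediag}(ii); your restriction-to-$r\base$ independence argument (which in effect shows that the subfamily $\{Q^\ell_{B^\ell_n}\}_{\ell=0}^n$ is a free $\cP(\bC^2)^K$-basis, the remaining $Q^k_{B^\ell_n}=|z|^{2(k-\ell)}Q^\ell_{B^\ell_n}$ being the induced $\bC$-basis) is exactly what is needed. Where you diverge is the last step of (ii): you close the argument by citing Lemma~\ref{l:baseoperatdiff}, i.e.\ that $\Delta_z^j\DD_n^k$ already form a basis of $\big(\bD(\bC^2)\otimes{\rm End}(\spaziorp_n)\big)^K$. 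That is logically admissible (the lemma precedes the corollary), but it makes (ii) a tautology and re-imports exactly what the corollary is meant to avoid: the spanning half of Lemma~\ref{l:baseoperatdiff} rests on the generation statement after \eqref{AmnD}, which comes from the list of invariant polynomial generators in \cite[Thm.~7.5]{FRY1} via Lemma~\ref{symmetrization}; the paper explicitly presents Corollary~\ref{c:gen} as proving generation ``independently of \cite{FRY1}.'' The intended self-contained conclusion of (ii) instead follows directly from (i) and Lemma~\ref{diagonalWnj}(iii): since $B^\ell_n=q^\ell_n(B^1_n)$ with $q^\ell_n(t)=\sum_s a_s t^s$, and conjugation is multiplicative, one has for every $k\ge\ell$
\begin{equation*}
Q^k_{B^\ell_n}(\zeta)=|\zeta|^{2(k-\ell)}\sum_{s\le\ell} a_s\,|\zeta|^{2(\ell-s)}\,\widehat\DD_n(\zeta)^s ,
\end{equation*}
so each element of the basis in (i) is visibly a $\cP(\bC^2)^K$-combination of powers of the symbols of $\Delta_z$ and $\DD_n$, and generation follows with no appeal to Lemma~\ref{l:baseoperatdiff} or to \cite{FRY1} (your normalization bookkeeping for $\Delta_z\leftrightarrow Q^1_{B^0_n}(\partial)$ and $\widehat\DD_n(0,1)=B^1_n$ is fine and is all that is needed to pin the constants). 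So: your proof is valid as a proof of the literal statement, but to serve the role the paper assigns to the corollary you should replace the appeal to Lemma~\ref{l:baseoperatdiff} by the one-line identity above.
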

\medskip

\subsection{${\rm End}(\spaziorp_n)$-valued spherical functions as derivatives of scalar spherical functions} 
\quad
\medskip

For $\xi\ge0$ we denote by $\varphi_\xi$ the  spherical function of $(G,K)$ as  a Gelfand pair with eigenvalue $\xi$ relative to $\Delta_z$. In formulae,
\be\label{Bessel}
\varphi_\xi(z)=\varphi_1(\sqrt\xi z)\quad\text{ where }\quad \varphi_1(z)=\int_{S^3}e^{-i\lan z,\zeta\ran}\, d\sigma (\zeta)\ \ \  \Big(=\frac{1}{|z|/2}\,J_{1}\big(|z|\big)\Big)
\ee
and $\sigma$ is the normalized surface measure of the unit sphere $S^3$ in $\bC^2$.

\begin{proposition}\label{spettro}
The spectrum $\Sigma^n_\cD$ is the union of $n+1$ half-lines,
$$
\Sigma^n_\cD=\big\{\big(\xi,(-n+2j)\xi\big):\xi\ge0\,,\ j=0,\dots,n\big\}\ .
$$
If $\xi=0$, the only pair of eigenvalues $(0,0)$ is attained by the constant spherical function $\Phi_{0,0}(z)=I$.
\\
For $\xi>0$, the matrix-valued spherical function corresponding to the pair of eigenvalues
 $\big(\xi,(-n+2j)\xi\big)$ is 
\bea\label{e:Phixij}
\Phi_{\xi,j}(z)&=(n+1) \int_{S^3}e^{-i\sqrt\xi\lan z,\zeta \ran}Q_{E_{jj}}^n(\zeta)\,d\sigma(\zeta)
\\
&=\xi^{-n}\,(n+1)Q^n_{E_{jj}}(\de)\ph_\xi( z)
\\
&=(n+1)\big(Q^n_{E_{jj}}(\de)\ph_1\big)\big(\sqrt\xi z\big),
\eea
where $E_{jj}$ is the matrix with null entries except the $jj$-entry which equals $1$.
 \end{proposition}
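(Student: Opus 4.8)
The plan is to compute the joint eigenfunctions of $\Delta_z$ and $\DD_n$ directly in the Fourier picture and then recognize the answer as a derivative of the scalar spherical function $\varphi_\xi$. First I would recall that, by Corollary~\ref{c:gen}, an ${\rm End}(\spaziorp_n)$-valued spherical function $\Phi$ is characterized by being $K$-equivariant, satisfying $\Phi(0)=I$, and being a joint eigenfunction $\Delta_z\Phi=\xi\Phi$, $\DD_n\Phi=\xi_2\Phi$. Taking Fourier transforms, $\widehat\Phi$ is a measure supported where both symbols are scalar matrices, i.e.\ on the set $\{|\zeta|^2=\xi,\ \widehat\DD_n(\zeta)=\xi_2 I\}$; but $\widehat\DD_n$ is never scalar for $\zeta\ne 0$ (it has $n+1$ distinct eigenvalues $(-n+2j)|\zeta|^2$, $j=0,\dots,n$, after Lemma~\ref{l:baseoperatdiff}), so this interpretation must be relaxed: $\widehat\Phi(\zeta)$ should be supported on $|\zeta|=\sqrt\xi$ and, pointwise on that sphere, take values in the $\xi_2$-eigenspace of $\widehat\DD_n(\zeta)$. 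This forces $\xi_2\in\{(-n+2j)\xi : j=0,\dots,n\}$, which already pins down $\Sigma^n_\cD$ as claimed; the case $\xi=0$ is trivially $\Phi\equiv I$.

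Next, for $\xi>0$ and a fixed $j$, I would solve for $\Phi_{\xi,j}$. On the sphere $|\zeta|=1$ the projection onto the $j$-th eigenline of $\widehat\DD_n$ is, up to normalization, exactly the equivariant polynomial $Q^n_{E_{jj}}$ built in Lemma~\ref{l:estensione-matricediag}: indeed $\widehat\DD_n(\base)=i\,d\tau_n(\baseuno)$ is diagonal with eigenvalues $(-n+2\ell)$ on $e^\ell_n$, so on $\base$ the $\xi_2$-eigenprojection equals $E_{jj}$, and transporting by $\tau_n(k_{z'})$ gives $Q^n_{E_{jj}}$ on the whole sphere by $K$-equivariance. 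Hence the spherical function must be the inverse Fourier transform of the measure $c\,Q^n_{E_{jj}}(\zeta)\,d\sigma_{\sqrt\xi}(\zeta)$ on the sphere of radius $\sqrt\xi$, i.e.
\[
\Phi_{\xi,j}(z)=c\int_{S^3}e^{-i\sqrt\xi\langle z,\zeta\rangle}Q^n_{E_{jj}}(\zeta)\,d\sigma(\zeta)\ ,
\]
and one checks $\Delta_z\Phi_{\xi,j}=\xi\Phi_{\xi,j}$, $\DD_n\Phi_{\xi,j}=(-n+2j)\xi\,\Phi_{\xi,j}$ by differentiating under the integral sign and using that $Q^n_{E_{jj}}(\zeta)$ is a $\xi_2$-eigenvector of $\widehat\DD_n(\zeta)$ for every $\zeta$ with $|\zeta|=\sqrt\xi$, together with $|\zeta|^2=\xi$ on the support.

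To fix the constant $c$ I impose $\Phi_{\xi,j}(0)=I$: evaluating at $z=0$ gives $c\int_{S^3}Q^n_{E_{jj}}(\zeta)\,d\sigma(\zeta)$, and since this average is $K$-equivariant and $K$-invariant it is a scalar matrix, whose trace is $c\int_{S^3}\tr E_{jj}\,d\sigma=c$ (the trace of $Q^n_{E_{jj}}(\zeta)$ equals $\tr E_{jj}=1$ at every point because $\tau_n(k_{z'})$ is unitary); comparing the $(n+1)$-dimensional trace forces the scalar to be $\tfrac{c}{n+1}I$, hence $c=n+1$. This yields the first line of~\eqref{e:Phixij}. Finally, to pass to the second and third lines I use the Fourier-multiplier description from Corollary~\ref{c:gen}: since $Q^n_{E_{jj}}$ is a homogeneous polynomial of bi-degree $(n,n)$, $Q^n_{E_{jj}}(\partial)$ acts on the Fourier side as multiplication by $Q^n_{E_{jj}}(\zeta)$ (with the $(-2i)$ conventions already absorbed in the definition of $Q^n_{E_{jj}}(\partial)$), so
\[
(n+1)\,Q^n_{E_{jj}}(\partial)\varphi_\xi(z)=(n+1)\int_{S^3}e^{-i\sqrt\xi\langle z,\zeta\rangle}\,\xi^{\,n}\,Q^n_{E_{jj}}(\zeta)\,d\sigma(\zeta)\ ,
\]
using $\varphi_\xi(z)=\int_{S^3}e^{-i\sqrt\xi\langle z,\zeta\rangle}\,d\sigma(\zeta)$ from~\eqref{Bessel} and homogeneity of $Q^n_{E_{jj}}$ in $\zeta$ to extract $\xi^n$; dividing by $\xi^n$ gives the second line, and the scaling relation $\varphi_\xi(z)=\varphi_1(\sqrt\xi z)$ of~\eqref{Bessel}, again with homogeneity, gives the third. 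The main obstacle is the careful verification of the eigenvalue equations and of the normalization constant — in particular making precise the (heuristically clear but slightly delicate) statement that a $K$-equivariant joint eigendistribution of $\Delta_z$ and $\DD_n$ is necessarily the surface measure on a sphere weighted by the corresponding spectral projection of $\widehat\DD_n$, which is what both determines the shape of $\Sigma^n_\cD$ and produces $Q^n_{E_{jj}}$; once that is in hand, the passage between the three formulas is a routine Fourier-multiplier computation.
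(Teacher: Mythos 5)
Your proposal is correct in substance but follows a genuinely different route from the paper. The paper does not derive the spectrum from the joint eigenfunction equations at all: it invokes the general construction of bounded spherical functions for commutative triples over motion groups from \cite[(11.2) and Thm.~11.1]{RS}, which parametrizes them by the $K$-orbits $rS^3$ together with the decomposition of $\spaziorp_n$ under the stabilizer $K_r$ (the full group for $r=0$, a maximal torus for $r>0$, whence the one-dimensional pieces $\bC e^j_n$ and the projections $E_{jj}$); this gives simultaneously the completeness statement (hence the $n+1$ half-lines) and the integral formula, after which the paper only has to compute the $\DD_n$-eigenvalue via $Q^1_{B^1_n}(\de)Q^n_{E_{jj}}(\de)=Q^{n+1}_{B^1_nE_{jj}}(\de)$ and $B^1_nE_{jj}=(-n+2j)E_{jj}$ — the same algebra you perform on the Fourier side, where $Q^n_{E_{jj}}(\zeta)$ is the spectral projection of $\widehat\DD_n(\zeta)=Q^1_{B^1_n}(\zeta)$ by Lemma~\ref{l:baseoperatdiff} and Corollary~\ref{c:gen}. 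What your approach buys is independence from the classification theorem of \cite{RS} and a transparent spectral interpretation of $Q^n_{E_{jj}}$; what the paper's approach buys is precisely the step you flag as the ``main obstacle''. To make your argument complete you must carry out that step: boundedness of $\Phi$ gives temperedness of $\widehat\Phi$; the equation $(|\zeta|^2-\xi_1)\widehat\Phi=0$ shows the support is the (possibly empty) sphere $|\zeta|^2=\xi_1$ — which is also where boundedness enters to exclude $\xi_1\notin[0,\infty)$ — and, since the gradient of $|\zeta|^2-\xi_1$ does not vanish there, that $\widehat\Phi$ has transversal order zero, i.e.\ is a distribution on the sphere; the equation $(\widehat\DD_n-\xi_2 I)\widehat\Phi=0$ then forces $\xi_2=(-n+2j)\xi_1$ and values in the corresponding eigenline because the spectral projections are smooth on the sphere; finally $\tilde\tau_n$-equivariance together with transitivity of $U_2$ on the sphere shows that such a distribution is a scalar multiple of $Q^n_{E_{jj}}\,d\sigma$ (equivariant distributions on a $K$-homogeneous space valued in a fixed eigenline form a space of dimension at most one, and are smooth). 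None of this is false, but it is a nontrivial piece of distribution theory that your plan asserts rather than proves, whereas the paper sidesteps it entirely by quoting \cite{RS}. Your normalization argument (Schur's lemma plus $\tr Q^n_{E_{jj}}(\zeta)=1$ on the unit sphere, giving $c=n+1$) and the homogeneity computation identifying the three expressions in \eqref{e:Phixij} are correct and match the paper's.
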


\begin{proof}
By \cite[(11.2) and Thm. 11.1]{RS}, the bounded spherical function of the triple $(G,K,\tau_{n,n})$ can be constructed according to the following recipe.

Fix $r\base$, with $r=\sqrt\xi\ge0$, as base point in the $K$-orbit $r S^3$ in $\bC^2$, let $K_r$ be the stabilizer of $r\base$ in $K$, decompose $\spaziorp_n$ into its (inequivalent) irreducible components $W_{r,j}$ under $K_r$, and define
$$
\Phi_{\xi,j}(z)=\frac{n+1}{\dim W_{r,j}}\int_Ke^{-ir\lan kz,\base\ran}\tau_{n,n}(k\inv)P_{r,j}\tau_{n,n}(k)\,dk\ ,
$$
where $P_{r,j}$ is the orthogonal projection onto $W_{r,j}$.

If $r=0$, $K_r=K$ and we obtain constant function, $\Phi_0(z)=I$.

If $r>0$, $K_r$ is the torus of diagonal matrices $\begin{pmatrix}e^{i\theta}&0\\0&1\end{pmatrix}$, so each $W_{r,j}$ is the one-dimensional span of $e^j_n$ in \eqref{basisofVn}, for $j=0,\dots,n$.

Consequently, $P_{r,j}$ is represented by the matrix $E_{jj}$ in the basis $\{e^j_n\}$ and, by \eqref{P->B}
 and~\eqref{Bessel} for $r=\sqrt{\xi}$ we obtain
\begin{align*}
\Phi_{\xi,j}(z)&= (n+1) \int_Ke^{-ir\lan kz,\base\ran}\tau_{n,n}(k\inv)E_{jj}\tau_{n,n}(k)\,dk\\
    &=(n+1) \int_Ke^{-ir\lan z,k\inv\base \ran}Q_{E_{jj}}^n(k\inv\base)\,dk  \\
    &=(n+1) \int_{S^3}e^{-ir\lan z,\zeta \ran}Q_{E_{jj}}^n(\zeta)\,d\sigma(\zeta)\\
    &=(n+1)\big(Q_{E_{jj}}^n(\de)\varphi_1\big)(\sqrt{\xi}z)
    \\
    &=(n+1)\, \xi^{-n}\, \big(Q_{E_{jj}}^n(\de)\varphi_\xi\big)(z)\ .
\end{align*}

This proves that the set of $n+1$ functions in \eqref{e:Phixij} is the set of spherical functions whose eigenvalue relative to $\Delta_z$ is $\xi$. It remains to determine the eigenvalue relative to $\DD_n$ for each of them. Taking into account that $B^1_n E_{jj}=(-n+2j)E_{jj}$, we have
\begin{align*}
\DD_n\Phi_{\xi,j}&=(n+1)\,\xi^{-n}\,Q_{B^1_n}^1(\de)\,Q_{E_{jj}}^n(\de)\varphi_\xi \\
&=(n+1)\,\xi^{-n}\,Q_{B^1_n E_{jj}}^{n+1}(\de)\varphi_\xi\\
&=(n+1)\,\xi^{-n}\,(-n+2j)\, Q_{E_{jj}}^{n+1}(\de)\varphi_\xi\\
&=(n+1)\,\xi^{-n}\,(-n+2j)\, Q_{E_{jj}}^{n}(\de)\Delta_z\varphi_\xi \\
&=(-n+2j)\xi\, (n+1)\,\xi^{-n}\, Q_{E_{jj}}^{n}(\de)\varphi_\xi\\
&=(-n+2j)\xi\,\Phi_{\xi,j}\ . \qedhere
\end{align*}
\end{proof}

\medskip

\subsection{${\rm End}(\spaziorp_n)$--valued equivariant functions as derivatives of scalar valued functions}
\quad\\
\medskip
Suppose that $F$ is an ${\rm End}(\spaziorp_n)$--valued equivariant function. Then we can decompose $F$ into the sum
\[
F=\sum_{\ell=0}^nF_\ell
\]
where each $F_\ell$ is $\spaziW_n^\ell$--valued. We are going to prove that $\spaziW_n^\ell$-valued functions turn out to be of a special form. 

For our purposes it is convenient to  consider on $\Smatr$  the following  family of  norms  
\be\label{M-norms}
\|F\|_{(M)}=\max_{0\leq q\leq M}\|(1+|\cdot|^2)^M\, \Delta_z^q F\|_2,
\ee 
where the $L^2$ norm of $\textup{End}(\spaziorp_n)$--valued functions is defined in \eqref{norma2}.

\medskip
For the sake of brevity, we denote by $Q^\ell_n$ the polynomial $Q^\ell_{B^\ell_n}$.

\begin{proposition}\label{sommaF} 
Let $F$ be 
in $\cS(\bC^2,\spaziW_n^j)^K$.
Then there exists a scalar valued invariant Schwartz function $g_0$
 such that
\[
 F=Q_n^\ell (\de)g_0=q_n^\ell(\mathbf D_n)g_0\ ,
\]
and $g_0$ is of the form  $g_0=g(|\cdot|^2)$, where $g$ is an even Schwartz function  on the real line.
Moreover, for any $M$  there exists $M'\geq M+\ell$ such that
\[
\|g\|_{(M)}\leq C_M\, \|F\|_{(M')}\ .
\]
\end{proposition}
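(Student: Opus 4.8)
The key structural input is Lemma~\ref{l:estensione-matricediag}(ii), which says that any $\spaziW_n^\ell$-valued equivariant polynomial is of the form $p(|z|^2)Q_n^\ell(z)$, together with the observation that $Q_n^\ell(\de)$ and $q_n^\ell(\DD_n)$ act identically on $\spaziW_n^\ell$-valued functions. The plan is to first produce the scalar function $g_0$ by ``dividing'' $F$ by $Q_n^\ell$, then to verify that the quotient is a Schwartz function, and finally to extract the norm estimate. First I would take Fourier transforms: $\widehat F$ is again $\spaziW_n^\ell$-valued and equivariant (the Fourier transform intertwines the $K$-actions), and $\widehat F(\zeta)=P(\zeta)$ near any ray is forced, by equivariance and the structure of $\spaziW_n^\ell$, to be a scalar multiple of $Q_n^\ell(\zeta)$ pointwise; more precisely, evaluating at the base point $\base$ as in \eqref{P(base)}, $\widehat F(r\base)$ lies in the one-dimensional space $\diagonali_n^\ell=\bC B_n^\ell$ for every $r$, so $\widehat F(r\base)=h(r^2)B_n^\ell$ for a scalar function $h$, and then equivariance propagates this to $\widehat F(\zeta)=h(|\zeta|^2)Q^0_{B_n^\ell}(\zeta)=|\zeta|^{-2\ell}h(|\zeta|^2)\,Q_n^\ell(\zeta)$ off the origin. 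Setting $\widehat{g_0}(\zeta)=|\zeta|^{-2\ell}h(|\zeta|^2)$ formally gives $\widehat F=\widehat{g_0}\,Q_n^\ell$, i.e. $F=Q_n^\ell(\de)g_0$; the content is that $g_0$ is actually Schwartz.

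\textbf{Main steps.} The decisive point is the regularity of $g_0$ at the origin, since dividing by $|\zeta|^{2\ell}$ is dangerous. Here I would use that $Q_n^\ell$ is a \emph{homogeneous} equivariant polynomial of bi-degree $(\ell,\ell)$ whose entries, by Lemma~\ref{l:estensione-matricediag}(i) and its proof, are (up to the intertwiner to $\spaziorp_{2\ell}$) of the shape $(z_2w_1-z_1w_2)^\ell(\bar z_1w_1+\bar z_2w_2)^\ell$; in particular the map $P\mapsto$ ``its $B_n^\ell$-coefficient function'' is, on the Fourier side, division by a fixed polynomial that is elliptic away from $0$ and vanishes exactly to order $2\ell$ at $0$ in a controlled way. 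The cleanest route is: show that $\Delta_z^\ell g_0$ (or rather $g_0$ reconstructed via a fixed constant-coefficient ``inverse'' of $Q_n^\ell(\de)$ modulo lower order) is smooth and Schwartz by a direct computation on the Fourier side, using that $h(|\zeta|^2)$ is smooth, rapidly decaying, and—crucially—vanishes to sufficiently high order at $\zeta=0$ because $F$, being a genuine Schwartz function, forces $\widehat F$ and hence the $B_n^\ell$-component $h(|\zeta|^2)Q^0_{B_n^\ell}(\zeta)$ to be smooth at the origin, which (comparing Taylor coefficients with the $|\zeta|^{2\ell}$ factor sitting inside $Q_n^\ell$) makes $|\zeta|^{-2\ell}h(|\zeta|^2)$ smooth. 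Concretely: write $Q_n^\ell(\zeta)=q_n^\ell(\widehat\DD_n(\zeta))$ using \eqref{e:matriciBj} and the fact that $\widehat\DD_n$ has symbol polynomial in $\zeta$; then $\widehat F(\zeta)=q_n^\ell(\widehat\DD_n(\zeta))\,\widehat{g_0}(\zeta)$ is the Fourier-side statement of $F=q_n^\ell(\DD_n)g_0$, and smoothness plus decay of $\widehat{g_0}$ is read off from that of $\widehat F$ together with the nonvanishing of the relevant eigenvalue-factor of $q_n^\ell(\widehat\DD_n)$ on the complement of $0$, handling the origin separately by the Taylor-order argument. Once $g_0$ is Schwartz, $K$-invariance is automatic (it is a scalar equivariant function), hence $g_0=g(|\cdot|^2)$ with $g$ even and Schwartz on $\bR$.

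\textbf{Norm estimate.} For the inequality $\|g\|_{(M)}\le C_M\|F\|_{(M')}$ I would argue on the Fourier transform side, where the norms \eqref{M-norms} translate into $L^2$ norms of derivatives times powers of $|\zeta|^2$ of $\widehat F$ and $\widehat{g_0}$: the relation $\widehat{g_0}=\widehat F/Q_n^\ell$ (interpreted as above), combined with the fact that on $|\zeta|\ge 1$ the symbol $Q_n^\ell$ is bounded below by $c|\zeta|^{2\ell}$ in Hilbert--Schmidt norm and on $|\zeta|\le 1$ one uses the vanishing order of $\widehat F$, shows that a seminorm of $\widehat{g_0}$ of order $M$ is controlled by a seminorm of $\widehat F$ of order $M+\ell$ plus a fixed number of extra derivatives absorbed in passing from the $z$-side norm \eqref{M-norms} to genuine Schwartz seminorms and back; that extra fixed loss is what produces the $M'\ge M+\ell$ in the statement, and one can take $M'=M+\ell+$ (a constant depending only on the dimension, here $2$). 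I expect the \textbf{main obstacle} to be exactly the behavior near $\zeta=0$: one must show that the ``division by $Q_n^\ell$'' does not destroy smoothness at the origin and does not blow up the seminorms there, and the honest way to do this is to exploit that we already know $F\in\cS(\bC^2,\spaziW_n^j)^K$ comes from a Schwartz function, so its jet at $0$ is a formal series of $\spaziW_n^j$-valued equivariant polynomials, each of which—by Lemma~\ref{l:estensione-matricediag}—is divisible by $Q_n^\ell$ with polynomial quotient; summing these quotients (Borel) matches the jet of $g_0$ at $0$, and a standard cut-off/Taylor-remainder argument upgrades this to smoothness of $g_0$ with the claimed estimate.
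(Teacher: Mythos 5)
Your argument is correct and essentially the paper's own: equivariance reduces $\widehat F$ on a ray to a scalar profile times $B_n^\ell$, Lemma~\ref{l:estensione-matricediag} forces the jet at the origin to be divisible by $Q^\ell_n$, a Hadamard/Taylor-remainder division then produces the scalar invariant factor, the inverse Fourier transform gives $F=Q_n^\ell(\de)g_0$, and the Schwarz--Mather theorem (which the paper cites to get $g_0=g(|\cdot|^2)$ with seminorm control) finishes, the only organizational difference being that the paper proves the pointwise factorization $\widehat F=\gamma_0\,Q_n^\ell$ as a statement valid for every $\spaziW_n^\ell$-valued equivariant Schwartz function and then applies it to $\widehat F$. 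One caveat: drop the side identity $Q_n^\ell(\zeta)=q_n^\ell\big(\widehat{\DD}_n(\zeta)\big)$ used in your ``concrete'' reformulation, since $q_n^\ell$ is monic but not a monomial, so the two sides agree only in their top homogeneous parts; your main argument never needs it.
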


\begin{proof} By  equivariance, it follows that
\[
\begin{aligned}
&F(0,e^{i\theta}r)=F(e^{i\theta}\cdot (0,r))=F(0,r)
\\
&F(0,r)=F(\exp(\theta \baseuno)\cdot (0,r))=\tau_n(\exp(\theta \baseuno))\,F(0,r)\, \tau_n(\exp(-\theta \baseuno))
\end{aligned}
\qquad \forall r,\theta\in \bR
\]
so that the matrix $F(0,r)$ is diagonal. As  $F(0,r)$ is in $\spaziW_n^\ell$, by Lemma~\ref{diagonalWnj}(i)
we can write $F(0,r)=f(r)\,B_n^\ell$ for some scalar $f(r)$. Clearly the so-obtained function $f$ is in $\cS(\bR)$ and
even.

Suppose that $P_{d}$ is the homogeneous term of degree $d$ in the Taylor expansion of $F$ centred at the origin.
Then $P_d$ is $K$-equivariant and $\spaziW_n^\ell$-valued. It follows from Lemma \ref{l:estensione-matricediag} (i) that $P_d=0$ if $d<2\ell$
and $P_{2\ell}$ is a constant multiple of $Q_{n}^\ell$. Hence
\[
f(r)=c_\ell\, r^{2\ell}+o(r^{2\ell}) \qquad r\to 0\ .
\] 
By Hadamard's division Lemma~\cite{Jet}
there exists an even smooth function $h$ on $\bR$ such that $f(r)=r^{2\ell}\, h(r)$ and $h(0)=c_\ell$,
therefore for $z=|z|z'$
\[
F(z)= h (|z|)\, |z|^{2\ell}\,\tau_n(k_{z'})\,B_n^\ell\tau_n(k_{z'})^*
= h_0(z)\,Q^\ell_n(z) 
\]
where $h_0$ is a scalar invariant Schwartz function on $\bC^2$ and for any $M'$
\[
\|h_0\|_{(M')}\leq C_{M'}\, \|h\|_{(M')}\leq C_{M'}\, \|f\|_{(M'+\ell)}\leq C_{M'}\, \|F\|_{(M'+\ell)}.
\]
Since the Fourier transform commutes with the action of $\grpU$, the same kind of result holds for the Fourier transform of $F$.
Therefore there exists an invariant Schwartz function 
$\gamma_0$ on $\bC^2$ such that
\[
\hat F(\zeta)=Q^\ell_n (\zeta)\, \gamma_0(\zeta)
\]
and taking the inverse Fourier transform
\[
F(z)=Q^\ell_{n} (\partial)\, g_0(z),
\]
with
\[
\|g_0\|_{(M')}\leq C_{M'}\,\|\gamma_0\|_{(M')}\leq C_{M'}\,\|\hat F\|_{(M'+\ell)}\leq C_{M'}\,\| F\|_{(M'+\ell)}.
\]
By~\cite{GS} (see also \cite{Mather}), the function $g_0$ is of the  form $g_0=g(|\cdot|^2)$ with $g\in \cS(\bR)$
and, for any given $M$, there exists  $M'\geq M$ such that
\[
\|g\|_{(M)}\leq C_M\, \|g_0\|_{(M')}.
\qedhere
\]
\end{proof}
\bigskip

\section{Schwartz correspondence for $\Smatr$ }\label{sect-Gn}
According to~\eqref{Gn} , we denote by $\cGn F$ the $\tau_n$-spherical transform   of  a function $F$ in $ \Lmatr$  given  by 
$$
\cGn F(\xi,\xi(-n+2j))= \frac1{n+1}\, \int_{\bC^2}\tr\big(F(z)\Phi_{\xi,j}(-z)\big)\,dz \qquad \forall \xi\geq 0, \quad j=0,1,\ldots,n\ .
$$

The next two subsections will provide the proof of the following theorem.

\begin{theorem}\label{(S)ridotto} 
The map $\cGn$ is a 
 isomorphism of 
 $\Smatr$  onto $\cS(\Sigma_\cD^n)$.
\end{theorem}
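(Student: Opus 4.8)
The strategy is to transport the problem, via Proposition~\ref{sommaF}, from $\textup{End}(\spaziorp_n)$-valued equivariant functions to scalar-valued $K$-invariant functions on $\bC^2$, where the Schwartz correspondence for the ordinary Gelfand pair $\big(M_2(\bC),U_2\big)$ is already known from \cite{ADR2}. Concretely, given $F\in\Smatr$, decompose $F=\sum_{\ell=0}^n F_\ell$ into its $\spaziW_n^\ell$-components and apply Proposition~\ref{sommaF} to write $F_\ell=q_n^\ell(\DD_n)g_\ell$ with $g_\ell=g_\ell(|\cdot|^2)$ a scalar invariant Schwartz function, with the stated norm control $\|g_\ell\|_{(M)}\le C_M\|F\|_{(M')}$. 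Thus $F=\sum_{\ell=0}^n q_n^\ell(\DD_n)g_\ell$, and since the $q_n^\ell$ are monic of degree $\ell$, the family $\{q_n^\ell\}_{\ell=0}^n$ is a basis of polynomials of degree $\le n$; equivalently $F=\sum_{j=0}^n \DD_n^j f_j$ for scalar invariant Schwartz $f_j$ obtained from the $g_\ell$ by an invertible (fixed, $n$-dependent) linear change.

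\textbf{Key steps.} First I would compute the action of $\cGn$ on each summand. Using the expression \eqref{e:Phixij} for $\Phi_{\xi,j}$ and the fact that $\DD_n\Phi_{\xi,j}=(-n+2j)\xi\,\Phi_{\xi,j}$ (proved at the end of the proof of Proposition~\ref{spettro}), one gets by integration by parts in the defining integral for $\cGn$ that
\[
\cGn\big(q_n^\ell(\DD_n)g_0\big)(\xi,(-n+2j)\xi)=q_n^\ell\big((-n+2j)\xi\big)\,\cG_{\tau_0}\! g_0(\xi)\ ,
\]
where $\cG_{\tau_0}g_0$ is the ordinary ($\tau_0$-type) spherical transform of the scalar function $g_0$, a function of $\xi\ge 0$ (here I use that $\Delta_z$ acting on $g_0$ corresponds to multiplication by $\xi$ on $\Sigma^0_\cD=[0,\infty)$, and that $g_0$ being radial, its ordinary spherical transform is simply its ``Bessel'' Hankel-type transform). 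Hence for $F=\sum_{j=0}^n\DD_n^j f_j$,
\[
\cGn F(\xi,(-n+2j)\xi)=\sum_{i=0}^n \big((-n+2j)\xi\big)^i\,\widetilde{\cG_{\tau_0}f_i}(\xi)\ ,\qquad j=0,\dots,n,\ \xi\ge0.
\]
This exhibits $\cGn F$ on the $n+1$ half-lines comprising $\Sigma^n_\cD$ (Proposition~\ref{spettro}) as a Vandermonde combination, in the nodes $(-n+2j)\xi$, of the $n+1$ scalar Schwartz functions $\widetilde{\cG_{\tau_0}f_i}(\xi)$. Since the nodes $-n,-n+2,\dots,n$ are distinct, the $(n+1)\times(n+1)$ Vandermonde matrix is invertible, so $\cGn F\in\cS(\Sigma^n_\cD)$ precisely when each $\xi\mapsto \widetilde{\cG_{\tau_0}f_i}(\xi)$ extends to an even Schwartz function, i.e. when each $\cG_{\tau_0}f_i\in\cS(\Sigma^0_\cD)$; and the Schwartz norms are equivalent up to the (fixed) Vandermonde change of variables together with the bookkeeping of the factors $\xi^i$, which are handled by Whitney-type extension across $\xi=0$ exactly as in \cite{ADR2}.

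\textbf{Conclusion and main obstacle.} Injectivity of $\cGn$ follows since $\cGn F=0$ forces, by Vandermonde inversion, $\cG_{\tau_0}f_i=0$ for all $i$, hence $f_i=0$ by injectivity of the ordinary spherical transform, hence $F=0$. For surjectivity and the isomorphism of Fr\'echet spaces: given $g\in\cS(\Sigma^n_\cD)$, write $g$ on the $j$-th half-line as $g_j(\xi)$; solve the Vandermonde system to produce candidate functions $h_i(\xi)$ with $g_j(\xi)=\sum_i((-n+2j)\xi)^i h_i(\xi)$; check $h_i\in\cS(\Sigma^0_\cD)$ (this needs that the data $g_j$ agree to infinite order at $\xi=0$ in the manner dictated by $\Sigma^n_\cD$ being a single point there, which is exactly the compatibility encoded in $\cS(\Sigma^n_\cD)$ as restrictions of Schwartz functions on $\bR^2$, cf. \cite{ADR3, Jet, GS}); apply the known Schwartz correspondence for $\big(M_2(\bC),U_2\big)$ from \cite{ADR2} to get $f_i\in\cS(\bC^2)^K$ with $\cG_{\tau_0}f_i$ the prescribed even Schwartz function; and set $F=\sum_{i=0}^n\DD_n^i f_i\in\Smatr$, using Proposition~\ref{sommaF} in reverse (and Corollary~\ref{c:gen}) to see $F$ is a genuine equivariant Schwartz function with $\cGn F=g$. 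The continuity of $\cGn$ and of its inverse follows by tracking the explicit norm inequalities from Proposition~\ref{sommaF}, from the boundedness of $\cG_{\tau_0}$ and $\cG_{\tau_0}^{-1}$ on the relevant Schwartz spaces, and from the fixed finite-dimensional linear algebra. The step I expect to be the crux is verifying the \emph{compatibility of the infinite jet at $\xi=0$}: one must show that a function in $\cS(\Sigma^n_\cD)$ — whose restrictions to the $n+1$ half-lines share the single value (and all derivatives, suitably matched) at the common endpoint — yields, after Vandermonde inversion, genuinely smooth (Schwartz) functions $h_i$ of $\xi$, with no spurious singularity at $\xi=0$; this is where one invokes that $\Sigma^n_\cD\subset\bR^2$ is the stated union of half-lines through the origin and that Schwartz functions on $\bR^2$ restrict compatibly, precisely the mechanism underlying Theorem~\ref{rendiconti} and the results of \cite{ADR3}.
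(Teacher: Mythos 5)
Your architecture largely parallels the paper's: the ``into'' direction rests, as in Lemma~\ref{FourierF}, on the decomposition $F=\sum_\ell F_\ell$ and Proposition~\ref{sommaF}, which reduces everything to finitely many scalar $K$-invariant Schwartz functions, so that on $\Sigma_\cD^n$ the transform $\cGn F$ is a polynomial combination in $(\xi_1,\xi_2)$ with Schwartz coefficients in $\xi_1$ (your formula with $q_n^\ell(t_j\xi)$ versus the paper's $\xi^\ell q_n^\ell(t_j)\gamma_\ell(\xi)$ differs only in how powers of $\Delta_z$ are bookkept and is immaterial), and the isomorphism is completed by the open mapping theorem. One correction here: the remaining obstacle in this direction is not a ``Whitney-type extension across $\xi=0$''. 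The function $\sum_i\xi_2^i h_i(\xi_1)$ is smooth everywhere, including the origin; it simply fails to be Schwartz on $\bR^2$ because it grows in $\xi_2$ off the cone $|\xi_2|\le n\xi_1$ containing $\Sigma_\cD^n$. The paper repairs this by multiplying with a smooth cutoff equal to $1$ on that cone (Corollary~\ref{estensioneGn}); some such device must appear in your argument as well.

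The substantive issue is the step you yourself flag as the crux of surjectivity. Inverting the Vandermonde system gives $\xi^i h_i(\xi)=\sum_j (V^{-1})_{ij}\,g\big(\xi,(-n+2j)\xi\big)$, and you must show that dividing by $\xi^i$ leaves a Schwartz function of $\xi$, with no singularity at $0$; appealing to ``compatibility of the infinite jet'' of the restrictions is an assertion, not a proof, so as written this is a genuine gap. It is exactly the point at which the paper's Proposition~\ref{onto} supplies the missing idea: instead of inverting the Vandermonde matrix, write the interpolating polynomial of $t\mapsto g(\xi,\xi t)$ at the nodes $-n+2j$ in Newton form; each divided difference of order $\ell$ carries the factor $\xi^\ell$ by the chain rule, and the Hermite--Genocchi formula exhibits the remaining factor $\mu_\ell(\xi)$ as an explicit iterated integral of $\partial_2^{\ell}g\big(\xi,\xi(-n+2u_1+\cdots+2u_\ell)\big)$, manifestly Schwartz in $\xi$ on all of $\bR$ and with norm control (which is needed again later for the polynomial-in-$n$ estimates of Section~7.3). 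Equivalently, a Taylor expansion of $g$ in the second variable with integral remainder fills your gap; with that ingredient your Vandermonde route closes. Note also that the paper then defines $f_\ell$ directly as the inverse Fourier transform of $\mu_\ell(|\zeta|^2)$, so surjectivity does not require invoking the scalar Schwartz correspondence of \cite{ADR2}; the results of \cite{GS, Mather} enter only in the forward direction through Proposition~\ref{sommaF}.
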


\subsection{Schwartz extensions of  $\cGn F$}
\quad
 
 \medskip

 We begin by proving  that  $ \cGn $ maps $\Smatr $ into $\cS(\Sigma_\cD^n)$ and that it is continuous.
For $j=0,1,\ldots,n$ denote $t_j=-n+2j$.

\begin{lemma}\label{FourierF}
Let $F$ be in 
$\Smatr$. Then the following hold.
\begin{enumerate}[\rm(i)]
\item 
$\displaystyle{\widehat F( \sqrt\xi\,\base)\,e^j_n=\cGn F\big(\xi, t_j\xi\big)\,e^j_n
 \qquad \forall \xi\geq 0, \quad j=0,1,\ldots,n.
 }
$

\medskip

\item There exist  $\gamma_0,\ldots, \gamma_n$ in $  \cS(\bR)$ such that 
$$
\cGn F\big(\xi, t_j\xi\big)=
\sum_{\ell=0}^n \xi^\ell\,q^\ell_n ( t_j)\,\gamma_\ell(\xi)
\qquad \forall \xi\geq 0, \quad j=0,1,\ldots,n,
$$
where $q^0_n, \ldots q^n_n$ are the  polynomials defined in~{\rm\pref{e:matriciBj}}.
\end{enumerate}

\end{lemma}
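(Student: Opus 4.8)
The plan is to compute the spherical transform $\cGn F$ explicitly by unfolding the definition of $\Phi_{\xi,j}$ from Proposition \ref{spettro} and using Plancherel-type manipulations, then to feed in the structural description of $F$ from Proposition \ref{sommaF}.

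\textbf{Part (i).} First I would start from
$$
\cGn F\big(\xi,t_j\xi\big)=\frac1{n+1}\int_{\bC^2}\tr\big(F(z)\Phi_{\xi,j}(-z)\big)\,dz,
$$
and substitute the integral representation $\Phi_{\xi,j}(z)=(n+1)\int_{S^3}e^{-i\sqrt\xi\lan z,\zeta\ran}Q^n_{E_{jj}}(\zeta)\,d\sigma(\zeta)$ from \eqref{e:Phixij}. Exchanging the order of integration (justified since $F$ is Schwartz), the $z$-integral produces $\widehat F(\sqrt\xi\,\zeta)$, so that
$$
\cGn F\big(\xi,t_j\xi\big)=\int_{S^3}\tr\big(\widehat F(\sqrt\xi\,\zeta)\,Q^n_{E_{jj}}(\zeta)\big)\,d\sigma(\zeta).
$$
Now I would use $K$-equivariance of $\widehat F$: for $|\zeta|=1$ write $\zeta=k_\zeta\base$, so $\widehat F(\sqrt\xi\,\zeta)=\tau_n(k_\zeta)\widehat F(\sqrt\xi\,\base)\tau_n(k_\zeta)^*$, and similarly $Q^n_{E_{jj}}(\zeta)=\tau_n(k_\zeta)E_{jj}\tau_n(k_\zeta)^*$ by \eqref{P->B}. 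The trace is then conjugation-invariant, giving $\tr\big(\widehat F(\sqrt\xi\,\base)E_{jj}\big)$, independent of $\zeta$; the surface integral is trivial. Since $\widehat F(\sqrt\xi\,\base)$ is diagonal in the basis $\{e^j_n\}$ (same weight-vector argument as in Proposition \ref{sommaF}: it commutes with $d\tau_n(\baseuno)$), $\tr\big(\widehat F(\sqrt\xi\,\base)E_{jj}\big)$ is precisely its $jj$-entry, i.e. the eigenvalue of $\widehat F(\sqrt\xi\,\base)$ on $e^j_n$. This gives the claimed identity $\widehat F(\sqrt\xi\,\base)e^j_n=\cGn F(\xi,t_j\xi)\,e^j_n$.

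\textbf{Part (ii).} Here I would decompose $F=\sum_{\ell=0}^n F_\ell$ with $F_\ell\in\cS(\bC^2,\spaziW_n^\ell)^K$ as in Section 7.2. Proposition \ref{sommaF} applies to each $F_\ell$: there is an invariant Schwartz function $g_{0,\ell}=g_\ell(|\cdot|^2)$ with $F_\ell=Q^\ell_n(\de)g_{0,\ell}$, hence $\widehat{F_\ell}(\zeta)=Q^\ell_n(\zeta)\widehat{g_{0,\ell}}(\zeta)$, and $\widehat{g_{0,\ell}}$ is again of the form $\gamma_\ell(|\zeta|^2)$ for some $\gamma_\ell\in\cS(\bR)$ (a radial Schwartz function on $\bC^2$ has radial Schwartz Fourier transform, and one invokes \cite{GS} as in Proposition \ref{sommaF} to write it as a Schwartz function of $|\zeta|^2$). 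Evaluating at $\zeta=\sqrt\xi\,\base$: since $Q^\ell_n=Q^\ell_{B^\ell_n}$ and $B^\ell_n=q^\ell_n(B^1_n)$ with $B^1_n E_{jj}=t_j E_{jj}$, we get $Q^\ell_n(\sqrt\xi\,\base)e^j_n=\xi^\ell q^\ell_n(t_j)e^j_n$ by \eqref{P->B} evaluated at the base point (where $k_{z'}=e$). Therefore
$$
\widehat F(\sqrt\xi\,\base)e^j_n=\sum_{\ell=0}^n\xi^\ell\,q^\ell_n(t_j)\,\gamma_\ell(\xi)\,e^j_n,
$$
and combining with part (i) finishes the proof.

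\textbf{Main obstacle.} The routine points (Fubini, Schwartz bounds) are harmless; the one step that needs genuine care is the passage from the intrinsic object $\widehat{g_{0,\ell}}$ to a \emph{Schwartz} function $\gamma_\ell$ of the single real variable $|\zeta|^2$ — one must cite the Schwartz-square-root result \cite{GS} (already used in Proposition \ref{sommaF}) rather than just invoke smoothness, and keep the norm estimates compatible. A secondary point is bookkeeping the two uses of equivariance (for $F$ and for $\widehat F$) and the identity \eqref{P->B} at the base point, where several factors collapse and it is easy to misplace a power of $\xi$ or a $q^\ell_n(t_j)$.
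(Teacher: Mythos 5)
Your proposal is correct and follows essentially the same route as the paper: part (i) by inserting the integral formula \eqref{e:Phixij}, applying Fubini, and using equivariance of $\widehat F$ and $Q^n_{E_{jj}}$ to collapse the sphere integral to the (diagonal) matrix $\widehat F(\sqrt\xi\,\base)$; part (ii) by decomposing $F=\sum_\ell F_\ell$ and applying Proposition \ref{sommaF} to each $\spaziW_n^\ell$-valued piece, evaluating $Q^\ell_n$ at the base point. Your explicit remarks on the diagonality of $\widehat F(\sqrt\xi\,\base)$ and on invoking \cite{GS} for the radial Fourier transform only make explicit what the paper leaves implicit.
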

 
 \begin{proof}

 Let $F$ be in 
$\Smatr$. Using the equalities   in~\eqref{e:Phixij}, we obtain
\beas
\cGn F(\xi,\xi t_j)
&= \frac1{n+1}\, \int_{\bC^2}\tr\big(F(z)\Phi_{\xi,j}(-z) \big)\,dz
\\
   &=  \int_{\bC^2} \int_{S^3}e^{-i\sqrt \xi\lan z,\zeta \ran}\tr\big(F(z)Q_{E_{jj}}^n(\zeta)\big)\,d\sigma(\zeta)\,dz
   \\
&=
\int_{S^3}\tr\big(\widehat F(\sqrt \xi\,\zeta) Q^n_{E_{jj}}(\zeta)  \big)\,d \sigma (\zeta)
\\
&=
\tr\big(\widehat F(  \sqrt\xi\base )\,E_{jj} \big)
  \\
&=
   \big(\widehat F( \sqrt\xi\base )\big)_{jj}\ .
\eeas
In order to prove (ii), we  decompose $F $ into the sum
$\displaystyle
F=\sum_{\ell=0}^nF_\ell
$
where each $F_\ell$ is in  $\SmatrW$.
By Proposition~\ref{sommaF}, for each $\ell$ there exists a  function $\gamma_\ell \in \cS(\bR)$ such that 
 $$
\widehat F_\ell(  \sqrt\xi\,\base)= \xi^\ell \, \gamma_\ell (\xi) \,B^\ell_n= \xi^\ell \, \gamma_\ell (\xi) \, \mathrm{diag}\left(q^\ell_n(t_0),\ldots,q^\ell_n( t_n) \right)
\qquad \forall \xi\geq 0
 $$
 and (ii) follows.
\end{proof}

\begin{corollary}\label{estensioneGn} 
Let $F$ be in 
$\Smatr$ . Then there exists $g\in\cS(\bR^2)$ such that 
${g}_{ |_{\Sigma_\cD^n}}=\cGn F$. Moreover, for every $M$ there exist $M'>M+n$ and a constant $C_{M,n}$ such that 
\[
\|g\|_{(M)} \leq C_{M,n} \| F\|_{(M')}.
\]
\end{corollary}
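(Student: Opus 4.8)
The goal is to produce, for $F\in\Smatr$, a Schwartz function $g$ on $\bR^2$ whose restriction to $\Sigma_\cD^n$ equals $\cGn F$, together with the quantitative norm bound. The plan is to build $g$ explicitly out of the one–variable functions $\gamma_0,\dots,\gamma_n$ supplied by Lemma~\ref{FourierF}(ii).

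\textbf{Construction of the extension.} Recall from Proposition~\ref{spettro} that
$\Sigma_\cD^n=\{(\xi,t_j\xi):\xi\ge0,\ j=0,\dots,n\}$ with $t_j=-n+2j$, so a point $(\xi_1,\xi_2)$ of $\Sigma_\cD^n$ has $\xi_1\ge0$ and $\xi_2/\xi_1\in\{t_0,\dots,t_n\}$ (when $\xi_1>0$). By Lemma~\ref{FourierF}(ii),
\[
\cGn F(\xi,t_j\xi)=\sum_{\ell=0}^n \xi^\ell\, q_n^\ell(t_j)\,\gamma_\ell(\xi),
\]
and since $t_j=\xi_2/\xi_1$ on the $j$-th half-line, the natural candidate is
\[
g(\xi_1,\xi_2)=\sum_{\ell=0}^n \xi_1^\ell\, q_n^\ell\!\Big(\tfrac{\xi_2}{\xi_1}\Big)\,\gamma_\ell(\xi_1)
=\sum_{\ell=0}^n \big(\xi_1^\ell q_n^\ell(\xi_2/\xi_1)\big)\,\gamma_\ell(\xi_1).
\]
The key point is that $q_n^\ell$ is a polynomial of degree exactly $\ell$ (Lemma~\ref{diagonalWnj}(iii)), so $\xi_1^\ell q_n^\ell(\xi_2/\xi_1)$ is a genuine homogeneous polynomial of degree $\ell$ in $(\xi_1,\xi_2)$; write it as $P_\ell(\xi_1,\xi_2)$. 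Thus $g(\xi_1,\xi_2)=\sum_{\ell=0}^n P_\ell(\xi_1,\xi_2)\,\gamma_\ell(\xi_1)$, a sum of polynomials times Schwartz functions of the single variable $\xi_1$. Each summand extends to a smooth function on all of $\bR^2$, but is not Schwartz because it does not decay in the $\xi_2$ direction. To fix this I would multiply by a cutoff: choose $\chi\in C_c^\infty(\bR)$ with $\chi\equiv1$ on $[-n,n]\supset\{t_0,\dots,t_n\}$, and replace the genuine extension by
\[
g(\xi_1,\xi_2)=\sum_{\ell=0}^n P_\ell(\xi_1,\xi_2)\,\gamma_\ell(\xi_1)\,\psi(\xi_1,\xi_2),
\]
where $\psi$ is a Schwartz cutoff localizing near the cone $|\xi_2|\le(n+1)|\xi_1|$, e.g. $\psi(\xi_1,\xi_2)=\eta(\xi_2/(1+\xi_1^2))$ with $\eta\in C_c^\infty$ equal to $1$ near $0$; this leaves the values on $\Sigma_\cD^n$ unchanged (there $|\xi_2/(1+\xi_1^2)|\le |t_j|\,\xi_1/(1+\xi_1^2)$ stays in a fixed compact set, on which $\eta\equiv1$) and restores rapid decay in $\xi_2$. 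Since on $\Sigma_\cD^n$ one has $g(\xi,t_j\xi)=\sum_\ell \xi^\ell q_n^\ell(t_j)\gamma_\ell(\xi)=\cGn F(\xi,t_j\xi)$, this $g$ is the required extension.

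\textbf{Norm estimate.} For the quantitative bound $\|g\|_{(M)}\le C_{M,n}\|F\|_{(M')}$ I would argue in two steps. First, $\|g\|_{(M)}$ is controlled by a finite sum (with $n$-dependent combinatorial constants coming from the $P_\ell$, the cutoff $\psi$, and the Leibniz rule) of one-variable Schwartz seminorms $\|\gamma_\ell\|_{(M_1)}$ for some $M_1=M_1(M,n)$; this is a routine computation since $P_\ell\psi$ and all its derivatives are bounded by polynomials, killed by the Schwartz decay of $\gamma_\ell$. Second, I must bound $\|\gamma_\ell\|_{(M_1)}$ by $\|F\|_{(M')}$. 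Here I trace back through the proof of Lemma~\ref{FourierF}(ii): $\gamma_\ell$ is the radial profile associated, via Proposition~\ref{sommaF}, to the $\spaziW_n^\ell$-component $F_\ell$ of $F$ (more precisely $\widehat{F_\ell}(\sqrt\xi\,\base)=\xi^\ell\gamma_\ell(\xi)B_n^\ell$, so $\gamma_\ell$ is $\xi^{-\ell}$ times a diagonal entry of $\widehat{F_\ell}$ restricted to a half-line through $\base$). Proposition~\ref{sommaF} already gives $\|g\|_{(M)}\le C_M\|F_\ell\|_{(M')}$ for the scalar Schwartz function $g$ attached to $F_\ell$, with $M'\ge M+\ell$, and $\gamma_\ell$ is obtained from that $g$ by the passage to $g(|\cdot|^2)$; so $\|\gamma_\ell\|_{(M_1)}\lesssim \|F_\ell\|_{(M_1')}$. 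Finally $F_\ell$ is the projection of $F$ onto the invariant summand $\spaziW_n^\ell$ in the decomposition~\eqref{Pnn}, an operation bounded on each $\|\cdot\|_{(M)}$ by a constant depending only on $n$ (it commutes with $\Delta_z$ and with multiplication by $(1+|z|^2)^M$). Composing these inequalities yields $M'>M+n$ and the constant $C_{M,n}$ as claimed.

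\textbf{Main obstacle.} The genuinely delicate point is not the existence of the extension — that is essentially forced by the homogeneity structure of $\Sigma_\cD^n$ and the polynomial $q_n^\ell$ of exact degree $\ell$ — but bookkeeping the norm estimate so that the loss $M\mapsto M'$ and the constant stay controlled: one must check that the factors $\xi^\ell$ appearing in $\widehat{F_\ell}(\sqrt\xi\,\base)=\xi^\ell\gamma_\ell(\xi)B_n^\ell$ are correctly absorbed (dividing by $\xi^\ell$ is what costs the extra $\ell\le n$ derivatives in $M'$, exactly as in Proposition~\ref{sommaF}), and that the projection onto $\spaziW_n^\ell$ and the radial reparametrization $g_0=g(|\cdot|^2)$ — which via~\cite{GS} also cost a controlled increase of $M$ — compose without blowing up the constant in $n$. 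I would organize this as a short chain of inequalities citing Proposition~\ref{sommaF} and Lemma~\ref{FourierF}, so that the corollary reduces to tracking the indices through that chain.
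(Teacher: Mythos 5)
Your construction is essentially the paper's own proof: both expand $\cGn F$ via Lemma~\ref{FourierF}(ii), observe that $\xi^\ell q_n^\ell(\xi_2/\xi_1)=\sum_k a_{k,\ell}\xi_2^k\xi_1^{\ell-k}$ is a genuine polynomial in $(\xi_1,\xi_2)$, multiply the resulting sum of polynomials times $\gamma_\ell(\xi_1)$ by a smooth cutoff adapted to the cone $|\xi_2|\le n\xi_1$ containing $\Sigma_\cD^n$, and obtain the norm bound by tracing $\|\gamma_\ell\|$ back through Proposition~\ref{sommaF}. The only difference is the shape of the cutoff (the paper takes $\eta\equiv1$ on the cone $C_n$ and supported in a slight enlargement, with bounded derivatives of all orders); just note that your $\eta$ must equal $1$ on the $n$-dependent interval $[-n/2,n/2]$ rather than merely ``near $0$'', which is harmless since $C_{M,n}$ is allowed to depend on $n$.
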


\begin{proof} Notice that   $\Sigma_\cD^n$ is contained in
 $C_n=\left\{(\xi_1, \xi_2)\in \bR^2\,\,:\,\, |\xi_2|\leq n \xi_1, \quad \xi_1\geq 0 \right\}$ .
Let $\eta$ be a smooth function on $\bR^2$ with bounded derivatives  of any order which takes value 1 on
 $C_n $ and vanishes outside  $C_n-(\varepsilon,0)$, for some $\varepsilon>0$.  
 Let $F$ be in $\Smatr$ and let $\gamma_0,\ldots, \gamma_n$ in $  \cS(\bR)$ be as in Lemma~\ref{FourierF} such that 
\[
\begin{aligned}
\cGn F\big(\xi, ( -n+2j)\xi\big)&=
\sum_{\ell=0}^n
 \sum_{k=0}^\ell a_{k,\ell}\,\big( \xi(-n+2j)\big)^k\,\xi^{\ell-k} \,\gamma_\ell(\xi)
\qquad \forall \xi\geq 0, \quad j=0,1,\ldots,n.
\end{aligned}
\]
where $a_{k,\ell}\in \bC$ are the coefficients of the polynomial $q^\ell_n$, i.e.
$q^\ell_n(t)=\sum_{k=0}^\ell a_{k,\ell}\, t^k$.
Then the  function $g$ defined on $\bR^2$ by 
$$
g(\xi_1, \xi_2)=\eta_n(\xi_1, \xi_2) \, \sum_{\ell=0}^n \sum_{k=0}^\ell a_{k,\ell}\, \xi_2^k\,\xi_1^{\ell-k} \,\gamma_\ell(\xi_1)
\qquad \forall (\xi_1, \xi_2)\in \bR,
$$
satisfies  the required properties. 
\end{proof}

\subsection{Surjectivity of $\cGn$ and Schwartz  correspondence for $\Smatr$
}
 \quad 
 \medskip
 
 We conclude the proof  of Theorem~\ref{(S)ridotto} by proving  that   the continuous linear map $\cGn\,\,:\,\,  \Smatr \longrightarrow \cS(\Sigma_\cD^n)$ is surjective.

\begin{proposition}\label{onto}
Let $g$ be in $\cS(\bR^2)$. Then   there exists $F$ in $\Smatr$ such that
$\cGn F=g_{|_{\Sigma_\cD^n}}$.
\end{proposition}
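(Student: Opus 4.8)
The plan is to invert the structure revealed by Lemma~\ref{FourierF} and Corollary~\ref{estensioneGn}. Given $g\in\cS(\bR^2)$, I want to produce $F\in\Smatr$ whose $\tau_n$-spherical transform restricts to $g$ on $\Sigma^n_\cD$. Since $\Sigma^n_\cD$ consists of the $n+1$ half-lines $\xi\mapsto(\xi,t_j\xi)$ with $t_j=-n+2j$, the data $g|_{\Sigma^n_\cD}$ is equivalent to the $n+1$ functions $\xi\mapsto g(\xi,t_j\xi)$, $\xi\ge0$; and by Lemma~\ref{FourierF}(i) these must equal the diagonal entries $\bigl(\widehat F(\sqrt\xi\,\base)\bigr)_{jj}$. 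So the first step is an \emph{interpolation step}: find scalar functions $\gamma_0,\dots,\gamma_n\in\cS(\bR)$ (or $\cS[0,\infty)$, extended evenly in $\sqrt\xi$) such that
\[
g(\xi,t_j\xi)=\sum_{\ell=0}^n\xi^\ell\,q_n^\ell(t_j)\,\gamma_\ell(\xi)\qquad\forall\,\xi\ge0,\ j=0,\dots,n.
\]
For fixed $\xi>0$ this is an $(n+1)\times(n+1)$ linear system in the unknowns $\xi^\ell\gamma_\ell(\xi)$ with coefficient matrix $\bigl(q_n^\ell(t_j)\bigr)_{j,\ell}$; since $q_n^\ell$ is monic of degree $\ell$ (Lemma~\ref{diagonalWnj}) and the $t_j$ are distinct, this matrix is invertible (it is row-equivalent to a Vandermonde matrix). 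Inverting, each $\xi^\ell\gamma_\ell(\xi)$ is a fixed linear combination of the $g(\xi,t_j\xi)$, hence a Schwartz function of $\xi$ on $[0,\infty)$; the subtle point is that we need $\gamma_\ell(\xi)$ itself (not just $\xi^\ell\gamma_\ell(\xi)$) to be Schwartz and even across $0$, i.e.\ we must divide out $\xi^\ell$. This is exactly where smoothness of $g$ at the origin enters: because $g$ is smooth, its jet at $(0,0)$ is a genuine Taylor series, and one checks degree by degree that the combination $\sum_j c_j^{(\ell)}g(\xi,t_j\xi)$ vanishes to order $\ell$ at $\xi=0$, so Hadamard's division lemma (as used in Proposition~\ref{sommaF}) yields $\gamma_\ell\in\cS[0,\infty)$.

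The second step is to \emph{build $F$ from the $\gamma_\ell$}. Set $F_\ell := Q_n^\ell(\partial)\,g_{0,\ell}$ where $g_{0,\ell}$ is the invariant Schwartz function on $\bC^2$ whose Fourier transform is the radial function $\gamma_\ell(|\zeta|^2)$ — so $\widehat{F_\ell}(\zeta)=Q_n^\ell(\zeta)\,\gamma_\ell(|\zeta|^2)$, exactly the form appearing in the proof of Proposition~\ref{sommaF}. Each $F_\ell$ lies in $\cS(\bC^2,\spaziW_n^\ell)^K$ because $Q_n^\ell$ is a $\spaziW_n^\ell$-valued equivariant polynomial (Lemma~\ref{l:estensione-matricediag}(ii), Corollary~\ref{c:gen}) and Fourier transform intertwines the $U_2$-actions. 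Put $F=\sum_{\ell=0}^n F_\ell\in\Smatr$. Evaluating as in Lemma~\ref{FourierF}(i)–(ii), $\widehat F(\sqrt\xi\,\base)$ is diagonal with $jj$-entry $\sum_\ell \xi^\ell q_n^\ell(t_j)\gamma_\ell(\xi)$, which by construction equals $g(\xi,t_j\xi)$; hence $\cGn F(\xi,t_j\xi)=g(\xi,t_j\xi)$ for all $\xi\ge0$ and all $j$, i.e.\ $\cGn F=g|_{\Sigma^n_\cD}$. Combined with Corollary~\ref{estensioneGn} (which gives injectivity-type control and the Schwartz-extension direction) and the fact that $\cGn$ is a continuous algebra homomorphism, this establishes that $\cGn$ is onto $\cS(\Sigma^n_\cD)$ and, together with the already-proven injectivity and the open mapping theorem for Fréchet spaces, an isomorphism — completing Theorem~\ref{(S)ridotto}.

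The main obstacle I anticipate is the divisibility/regularity bookkeeping in the first step: showing that after solving the Vandermonde-type system the individual $\gamma_\ell$ are genuinely Schwartz and even (not merely that $\xi^\ell\gamma_\ell$ is). This requires knowing that the linear functional $g\mapsto\sum_j c_j^{(\ell)}g(\xi,t_j\xi)$ annihilates the lower-order part of the jet of $g$ at the origin — equivalently, that the restriction of $g$ to $\Sigma^n_\cD$ automatically satisfies the compatibility relations forced by the cone structure. The cleanest way to see this is to note that polynomials in $(\xi_1,\xi_2)$ restricted to $\Sigma^n_\cD$ are precisely the functions $\xi\mapsto\sum_\ell\xi^\ell q_n^\ell(t_j)c_\ell$ with $c_\ell$ constants (because $\{1,t,\dots,t^n\}$ and $\{q_n^0,\dots,q_n^n\}$ span the same space), so the jet of $g$ at $0$, read along each half-line, \emph{is} of the desired form term by term; then the division by $\xi^\ell$ is legitimate degree by degree, and Hadamard's lemma plus a Borel-summation/Seeley-type argument upgrades this to the Schwartz category with the norm estimates. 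Everything downstream — the norm inequalities needed later for condition (S') — then follows by tracking constants through these finitely many linear operations, as in Corollary~\ref{estensioneGn}.
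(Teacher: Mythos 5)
Your construction is correct, and it follows the same overall strategy as the paper: interpolate $g$ along the $n+1$ half-lines of $\Sigma_\cD^n$ so as to write $g(\xi,t_j\xi)=\sum_{\ell}\xi^\ell p_\ell(t_j)\gamma_\ell(\xi)$ with scalar Schwartz coefficients, then synthesize $F$ by applying equivariant polynomial multipliers to radial Schwartz data (your $\widehat F_\ell=Q^\ell_n\cdot\gamma_\ell(|\cdot|^2)$ is just the Fourier-side packaging of the paper's $F=\sum b_{k,\ell}\DD_n^k\Delta_z^{\ell-k}f_\ell$). Where you genuinely diverge is in the one delicate step, regularity of the coefficients at $\xi=0$. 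The paper interpolates in the Newton basis $\prod_{i<\ell}(t-t_i)$ and invokes the Hermite--Genocchi integral representation of divided differences: this exhibits $\mu_\ell(\xi)$ directly as a simplex integral of $\partial_2^{\ell}g(\xi,\xi(\cdot))$, so the factor $\xi^\ell$ comes out automatically, no division is ever performed, and smoothness together with Schwartz-norm control in $\xi$ is immediate from an explicit formula. You instead work in the basis $\{q_n^\ell\}$, invert the constant matrix $\big(q_n^\ell(t_j)\big)$ (indeed invertible, being Vandermonde times unipotent triangular), and must divide $\sum_j c_j^{(\ell)}g(\xi,t_j\xi)$ by $\xi^\ell$; your justification is sound, since the degree-$d$ homogeneous Taylor component of $g$ restricts on the $j$-th half-line to $\xi^d P_d(t_j)$ with $\deg P_d\le d$, and monicity of the $q_n^\ell$ makes the change of basis triangular, so the functional extracting the $q_n^\ell$-coefficient kills all degrees $d<\ell$ and Hadamard division applies on all of $\bR$ (no Seeley/Borel step is actually needed, since the combination is smooth on the whole line). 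One caveat: your parenthetical claim that restrictions of polynomials to $\Sigma_\cD^n$ are ``precisely'' the functions $\xi\mapsto\sum_\ell\xi^\ell q_n^\ell(t_j)c_\ell$ is not literally true; only the triangularity statement just described is needed, and that is what holds. The trade-off between the two routes: Hermite--Genocchi buys an explicit formula with clean uniform norm estimates (in the spirit of Corollary~\ref{estensioneGn}), while your route is more elementary linear algebra but requires the degree-by-degree compatibility check you correctly flagged and a bit more bookkeeping to track constants.
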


\begin{proof} For  $ j=0,1,\ldots,n$ let $t_j=(-n+2j) $ and
fix $\xi>0$ . Denote by $p_\xi$ 
the polynomial such that $p_\xi(t_j)=g\big(\xi ,\xi t_j\big)$, $ j=0,1,\ldots,n.$. Using Newton's interpolation formula, we write
$$
p_\xi (t) =
\mu_0(\xi )+
\xi\, \mu_1(\xi )(t-t_0)
+\xi^2\mu_2(\xi )(t-t_0)(t-t_1)
+\cdots+ \xi^n\mu_n(\xi )(t-t_0)\cdots(t-t_{n-1}).
$$
Then by the Hermite--Genocchi formula~\cite{Atk}, which we express in the equivalent form for equidistant points, we have
$$
\mu_\ell(\xi)=\frac1{\ell!}\int_0^1\int_0^{1}\cdots \int_0^{1}\partial_2^{(\ell)}g(\xi,\xi(-n+2u_1+\cdots+2u_\ell))\,du_{\ell}\cdots du_2\,du_1
\quad  \forall \xi>0,
$$
$\ell=0,1,\ldots,n.$. 

Via this formula we extend $\mu_0,\mu_1\ldots,\mu_n$ to Schwartz functions on $\bR$ and we can write  
$$
g(\xi, \xi t_j)=\sum_{0\le k\le\ell\le n}  b_{k,\ell}\,(t_j\xi)^k\xi^{\ell-k}\,\mu_\ell(\xi ),
\qquad \forall \xi\geq 0, \quad j=0,1,\ldots,n,
$$
for some complex numbers $b_{k,\ell}$, where $0\leq k\leq \ell\leq n$.

Define 
$$
f_\ell(z)= \frac1{(2\pi)^4} \int_{\bC^2} \mu_\ell (|\zeta|^2)\, e^{i\lan z,\zeta\ran}\, d\zeta,
$$
then $\mu_\ell=\cG_0f_\ell$, $\ell=0,1,\ldots,n$ and the
function  
$$
F=\sum_{0\le k\le\ell\le n} b_{k,\ell} {\mathbf D}_n^k\Delta_z^{\ell-k}f_\ell
$$
satisfies the required properties.
\end{proof}

\begin{proof}[Proof of Theorem~\ref{(S)ridotto} ]   By 
 Corollary~\ref{estensioneGn} $\cGn$ maps $\Smatr$ into  $\cS(\Sigma_n)$  continuously and, by Proposition~\ref{onto}, it is surjective. It follows from the open mapping theorem for Fr\'echet spaces \cite{Tr} that also $\cG_n\inv$ is continuous.
  \end{proof}

\subsection{Norm estimates for $\cGn\inv$ with polynomial growth}
\quad
\medskip

Theorem~\ref{(S)ridotto} will be required in the proof of Schwartz correspondence for the strong Gelfand pair $(U_2\ltimes\bC^2,U_2)$. However, something more will be needed, i.e., that the dependence on $n$ of the Schwartz norm estimates for $\cG_n\inv$ is polynomial.

This fact can be deduced from a general result in \cite[Prop. 4.2.1]{Mar1} for weighted subcoercive systems of left-invariant differential operators on Lie groups with polynomial growth. However, we give an independent and relatively simple proof, well adapted to our case.

At this stage we disregard the issue of polynomial growth of the Schwartz norm estimates for the direct spherical transforms $\cGn$, because they are not needed in the proof. They will follow however once property (S) for the strong Gelfand pair $(G,K)$ will be established, see Corollary~\ref{Gnpolinomiale}.  Identifying  the  unit sphere  $S^3$  with the group $\grpSU$ as in~\eqref{e:id-sfera-su2},
 the expression of the  Laplacian in polar coordinates  takes the form
\bea\label{polari}
\Delta_z=-\partial_r^2-\frac{3}{r}\partial_r+\frac{1}{r^2}\Cas,
\eea
where $\Cas=-X_1^2-X_2^2-X_3^3$ is the Casimir operator  on  $\grpSU$.
In the   next lemma we determine the action of the laplacian on smooth    ${\rm End}(V_n)$--valued  functions.

\begin{lemma}
\label{azioneDelta}
Let  $F$ be in $\Smatr$
with $ F(0,\cdot)=\mathrm{diag}(f_0,\ldots f_n)$, then
\[
(\Cas F)
(r\base)e_n^j 
= \big( (t_j^2-n^2-2n)(
f_{j+1}-2f_j+f_{j-1}
 )+2t_j (
f_{j+1}-f_{j-1}
 )  
\big)(r)e_n^j 
\]
 where $t_j=-n+2j$, $j=0,1,\ldots,n$.
\end{lemma}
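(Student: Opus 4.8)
The plan is to compute the action of $\Cas = -X_1^2 - X_2^2 - X_3^2$ on $F$ evaluated at $r\base = (0,r)$, using the identification of $S^3$ with $\grpSU$ from \eqref{e:id-sfera-su2} and the $K$-equivariance of $F$. By \eqref{tau-equivariance}, for $k \in \grpSU$ we have $F(k \cdot (0,r)) = \tau_n(k)\, F(0,r)\, \tau_n(k)\inv$, so in a neighbourhood of $\base$ on the sphere $rS^3$ the function $F$ is completely determined by the diagonal matrix $F(0,r) = \mathrm{diag}(f_0(r),\dots,f_n(r))$ together with the conjugation action of $\tau_n$. The Casimir $\Cas$, being the (negative) Laplacian on $\grpSU$, acts on the spherical variable; concretely, writing $F(k\base)\cdot e_n^j$ in terms of the one-parameter subgroups $\exp(tX_i)$ and differentiating twice at $t=0$, the problem reduces to a computation inside the Lie algebra $\fs\fu_2$ acting on $\spaziorp_n$.

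First I would make the reduction precise: since $X_1 = \baseuno$ stabilises $\base$ only up to the phase torus and $\Cas$ is bi-invariant, I can evaluate $\Cas F$ at $\base$ by realising $\Cas$ as a second-order operator in the variables dual to $X_2, X_3$ acting transversally to the stabiliser, plus the contribution of $X_1$. Using the equivariance, $(\Cas F)(r\base) = \sum_{i=1}^3 \frac{d^2}{dt^2}\big|_{t=0} \tau_n(\exp(-tX_i))\, F(\exp(tX_i)r\base)\, \tau_n(\exp(tX_i))$ — expanding the derivative via Leibniz produces three types of terms: $d\tau_n(X_i)^2$ acting by conjugation on $F(0,r)$, cross terms $d\tau_n(X_i)$ applied once with one spatial derivative of $F$, and the purely spatial second derivative. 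The purely spatial part along $X_1$ vanishes (the $X_1$-flow fixes the orbit point up to phase, on which $F$ is constant), while along $X_2, X_3$ it moves $r\base$ within the orbit $rS^3$, and by equivariance this is again expressible through $\tau_n$ and $F(0,r)$; in fact the three directions reorganise so that the only surviving spatial contributions are the values $f_{j\pm1}(r)$, which enter because the raising/lowering operators $d\tau_n(X_2 \pm iX_3)$ shift the weight index $j$ by $\pm 1$ on the basis $\{e_n^j\}$.

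The key computational input is the action of $d\tau_n$ on the basis \eqref{basisofVn}: $d\tau_n(\baseuno) e_n^j = (n-2j)\,i\, e_n^j$ up to sign conventions — here $B_n^1 = i\,d\tau_n(\baseuno) = \mathrm{diag}(-n,\dots,n)$ so the relevant eigenvalue is $t_j = -n+2j$ — together with the standard formulas for the ladder operators $d\tau_n(\basedue \pm i\basetre)$ on $\spaziorp_n$. Plugging these into the expansion, the coefficient of $f_{j+1} - 2f_j + f_{j-1}$ collects the ``second difference'' arising from $d\tau_n(X_2)^2 + d\tau_n(X_3)^2$ acting by conjugation, which on the weight-$t_j$ vector $e_n^j$ contributes $t_j^2$ from the $X_1$-Casimir-completion and $-(n^2+2n)$ from the total Casimir eigenvalue $d\tau_n(\Cas) = (n^2+2n)I$ (cf. \eqref{AmnD}); the coefficient of $f_{j+1} - f_{j-1}$ collects the cross terms, which carry a single factor $t_j$ from one application of $d\tau_n(\baseuno)$, giving the $2t_j$. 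Matching all numerical constants is the routine but error-prone part.

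The main obstacle I anticipate is precisely the careful bookkeeping of the cross terms and of which spatial derivatives of $F$ survive: one must verify that no derivatives of $f_j$ of order higher than zero appear (i.e. that $\Cas$, acting tangentially to the $K$-orbit, genuinely only sees the orbit direction, since $r$ is held fixed by the polar decomposition \eqref{polari}), and that the ladder-operator normalisations combine so that the second-difference and first-difference structure emerges cleanly with coefficients $(t_j^2 - n^2 - 2n)$ and $2t_j$ respectively. A clean way to organise this is to conjugate everything to the $\spaziW_n^\ell$-decomposition or, more simply, to test the identity on each $F_\ell$ separately using Proposition~\ref{sommaF}; but the most direct route is the brute-force expansion above, checking the endpoint cases $j=0$ and $j=n$ (where $f_{-1}$ or $f_{n+1}$ are absent and the ladder operators annihilate the extremal weight vectors, so consistency requires the coefficients to vanish appropriately there too).
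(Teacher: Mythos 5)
Your intended route is the paper's own --- restrict $F$ to the orbit $rS^3$, use $K$-equivariance to express everything through $\tau_n$ and the diagonal matrix $F(r\base)$, let the Casimir act as second derivatives along the one-parameter subgroups $\exp(tX_i)$, and finish with the weight/ladder action on the basis $\{e_n^j\}$ --- but the one identity you actually display is false, and the bookkeeping you build on it would not produce the computation. By the very equivariance \eqref{tau-equivariance}, for $k'\in SU_2$ one has $F(k'\,r\base)=\tau_n(k')F(r\base)\tau_n(k')^{-1}$, so the quantity $\tau_n(\exp(-tX_i))\,F(\exp(tX_i)r\base)\,\tau_n(\exp(tX_i))$ equals $F(r\base)$ for every $t$: your right-hand side is the second derivative of a constant, i.e.\ $0$, not $(\Cas F)(r\base)$ (and a sign is missing in any case, since $\Cas=-X_1^2-X_2^2-X_3^2$). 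The correct step is the reverse substitution: write $(\Cas F)(r\base)=-\sum_i\frac{d^2}{dt^2}\big|_{t=0}F(\exp(tX_i)r\base)$, replace $F(\exp(tX_i)r\base)$ by $\tau_n(\exp(tX_i))F(r\base)\tau_n(\exp(tX_i))^{-1}$, and apply Leibniz; this yields
\begin{equation*}
(\Cas F)(r\base)=2\,d\tau_n(\Cas)\,F(r\base)+2\sum_{i=1}^{3} d\tau_n(X_i)\,F(r\base)\,d\tau_n(X_i)\ ,
\end{equation*}
in which no spatial derivative of $F$ appears at all. Your trichotomy of terms (``purely spatial second derivative'', ``cross terms with one spatial derivative of $F$'', conjugation terms) therefore does not correspond to any correct expansion, and the ``main obstacle'' you single out --- checking which derivatives of the $f_j$ survive --- evaporates once equivariance is used in the right direction.

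From the displayed identity onward, the lemma \emph{is} the ladder computation you defer as routine, and that is the actual content of the proof: $d\tau_n(X_1)=-iB_n^1$ makes the $i=1$ cross term equal to $-2(B_n^1)^2F(r\base)$, which together with $2\,d\tau_n(\Cas)=2(n^2+2n)I$ gives the coefficient $-2(t_j^2-n^2-2n)$ of $f_j$; writing $A=d\tau_n(X_2)$, $B=d\tau_n(X_3)$, $P=d\tau_n(X_2+iX_3)$, $M=d\tau_n(X_2-iX_3)$, one has $2(AFA+BFB)=PFM+MFP$, and the actions $Pe_n^j=2\sqrt{(j+1)(n-j)}\,e_n^{j+1}$, $Me_n^j=-2\sqrt{j(n-j+1)}\,e_n^{j-1}$ produce on $e_n^j$ the contributions $-4j(n-j+1)f_{j-1}$ and $-4(j+1)(n-j)f_{j+1}$, i.e.\ $(t_j^2-2t_j-n^2-2n)f_{j-1}$ and $(t_j^2+2t_j-n^2-2n)f_{j+1}$, which is exactly the stated second-difference/first-difference form. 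Your explanation of where $t_j^2$, $-(n^2+2n)$ and $2t_j$ ``come from'' reads the final formula backwards rather than deriving it. So as written the proposal has a genuine gap at its central step; once that step is repaired, the argument coincides with the paper's proof rather than offering an alternative. (Your endpoint remark is correct: for $j=0$ or $j=n$ the ladder coefficients vanish, so $f_{-1}$ and $f_{n+1}$ never enter.)
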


\begin{proof} Let  $F$ be in $ \cS\big(\bC^2,{\rm End}(\spaziorp_n)\big)^K$.
Because of the $\tau_n$--invariance, we have 
 \[
\begin{aligned}
\Cas \Big(\tau_n(k)F(r\base )\tau_n(k)^\ast \Big)=&
\tau_n(k)\,\,C F
(r\base)\,\,
\tau_n(k)^\ast
\qquad \forall r>0,\quad \forall k\in \grpSU\ .
\end{aligned}
\]
 Notice that 
\[
\begin{aligned}
\Cas=-X_1^2-X_2^2-X_3^3
&=-X_1^2-2iX_1-(X_2+iX_3)(X_2-iX_3),
\\
&=-X_1^2+2iX_1-(X_2-iX_3)(X_2+iX_3)
\end{aligned}
\]
and that, for every   $X\in \fs\fu_2$, we have 
$$
X\tau_n(k)=\tau_n(k)\, d\tau_n(X)\qquad \text{and}\qquad X\tau_n^\ast(k)= - d\tau_n(X)\, \tau_n^\ast(k).
$$
 Hence we obtain
$$
\Cas\tau_n(k)=\tau_n(k)\, d\tau_n(\Cas),\qquad 
\Cas\tau_n(k)^\ast=d\tau_n(\Cas)\tau_n^\ast(k).
$$
Moreover $d\tau_n(\Cas)$ and   $id\tau_n(X_1)=B_n^1$ commute with $\Cas F
(r\base)$ so that 
$$
(\Cas\tau_n(k))\,\, F
(r\base)\,\,
  \tau_n(k)^\ast+\tau_n(k)\,\, F
(r\base)\,\,
(\Cas \tau_n(k)^\ast)
=
2\,\tau_n(k)\,d\tau_n(\Cas)\, F(r\base)\,
  \tau_n(k)^\ast
$$  
and
$$
(X_1\tau_n(k))\,\, F
(r\base)\,\,
(X_1 \tau_n(k)^\ast)=
\tau_n(k)(B_n^1)^2\, F
(r\base)\tau_n(k)^\ast.
$$
Therefore by   the Leibnitz rule  
$$
(\Cas F)(r\base )=
2\left(d\tau_n(\Cas)-(B_n^1)^2\right) F(r\base )
+\Lambda F(r\base )\Lambda^\ast
 +\Lambda^\ast F(r\base )\Lambda
$$
where
$$
\Lambda =d\tau_n(X_2+iX_3) \quad {\text{ so that }} \quad  \Lambda^\ast=-d\tau_n(X_2-iX_3).
$$
  
Recalling that $d\tau_n(\Cas)=(n^2+2n)I$ and $B_n^1e_n^j=t_j\,e_n^j$, we only need to evaluate  $\Lambda F(r\base)\Lambda^\ast
 +\Lambda^\ast F(r\base)\Lambda$.
 Since $\Lambda  e_n^j =2\sqrt{(j+1)(n-j)}\, e_n^{j +1 }$ and $\Lambda^\ast e_n^j=  2\sqrt{\ell(n-j+1)}\, e_n^{j -1 }$, then 
\[
\begin{aligned}
\big(
 \Lambda F(r\base)\Lambda^\ast 
 +\Lambda^\ast F(r\base)\Lambda\, \big)e_n^j&=-4j(n-j+1)f_{j-1}(r)e_n^j-4(j+1)(n-j)f_{j+1}(r)e_n^j
 \\
 &=\Big((t_j^2-2t_j-n^2-2n\big)f_{j-1}+
\big(t_j^2+2t_j-n^2-2n\big)f_{j+1}\Big)(r)
e_n^j.
\end{aligned}
\]
  \end{proof}
  
  \begin{proposition}\label{inversocontinuo}
  For every $M\in \bN$ there exists $N_M\in\bN$ such that,
for every $n$ and every  $g$   in $\cS(\bR^2)$,  
$$
\|\cGn^{-1}(g_{|_{\Sigma_\cD^n}})\|_{(M)}\leq  C_{M,n}\|g\|_{(N_M)},
$$ 
where the constant $C_{M,n}$ has polynomial growth in $n$.
\end{proposition}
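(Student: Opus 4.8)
The goal is to quantify, in terms of $n$, the norm estimates implicit in Theorem~\ref{(S)ridotto}. The natural strategy is to revisit the construction of $\cGn^{-1}$ given in the proof of Proposition~\ref{onto}, which already produces, for $g\in\cS(\bR^2)$, an $F\in\Smatr$ with $\cGn F=g_{|_{\Sigma_\cD^n}}$ of the explicit form $F=\sum_{0\le k\le\ell\le n}b_{k,\ell}\,\DD_n^k\Delta_z^{\ell-k}f_\ell$, where $\mu_\ell=\cG_0 f_\ell$ and the $\mu_\ell$ are obtained from $g$ by the Hermite--Genocchi integral formula. First I would bound, for each fixed $M$, the Schwartz norms $\|f_\ell\|_{(M')}$ in terms of $\|g\|_{(N_M)}$ with $n$-independent loss $M'\mapsto N_M$: the map $g\mapsto\mu_\ell$ is an averaged $\ell$-th $\xi_2$-derivative followed by a radialization/Fourier step, and the dependence on $n$ enters only through the compact domains of integration $[0,1]^\ell$ and the sampling points $t_j=-n+2j\in[-n,n]$, so the relevant constants are at worst polynomial in $n$ (in fact bounded, once one tracks the $\cG_0^{-1}$ step via the classical radial Schwartz isomorphism of \cite{GS,Mather}). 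The polynomial combinatorial coefficients $b_{k,\ell}$ are those of the monic polynomials $q_n^\ell$ from \eqref{e:matriciBj}, whose size I would control below.

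\textbf{The operator estimates.} The heart of the matter is to bound $\|\DD_n^k\Delta_z^{\ell-k}f_\ell\|_{(M)}$ polynomially in $n$. Since $\Delta_z=Q^1_{B^0_n}(\partial)$ is scalar and acts diagonally on radial functions via \eqref{polari}, powers of $\Delta_z$ are harmless. The new point is $\DD_n$: by Corollary~\ref{c:gen} it equals $Q^1_{B^1_n}(\partial)$, and its action on $K$-equivariant functions is computed via Lemma~\ref{azioneDelta}, which expresses $\Cas F$ (equivalently, via $\Delta_z=-\partial_r^2-\tfrac3r\partial_r+\tfrac1{r^2}\Cas$, the angular part of $\DD_n$) as a finite-difference operator in the index $j$ with coefficients that are \emph{polynomial in $n$} of degree $2$ (the factors $t_j^2-n^2-2n$ and $2t_j$). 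Iterating $k\le n$ times produces coefficients polynomial in $n$ of degree $O(k)\le O(n)$; combined with $\frac1{r^2}$ factors and the weight $(1+|\cdot|^2)^M$ in the norm \eqref{M-norms}, each such step costs a fixed number of extra $\Delta_z$-derivatives and multiplications by $(1+|\cdot|^2)$, i.e. an $n$-independent increase of the norm index and an $n$-polynomial multiplicative constant. I would also need to control the coefficients $a_{k,\ell}$ of $q_n^\ell$ (hence the $b_{k,\ell}$): from the recursion $q_n^\ell(t)=t\,q_n^{\ell-1}(t)-(\text{projection correction})$ in the proof of Lemma~\ref{diagonalWnj}(iv), and the fact that the relevant eigenvalues lie in $[-n,n]$, one gets that $q_n^\ell$ evaluated on $[-n,n]$ is bounded by a polynomial in $n$; this suffices because $|t_j\xi|\le n\xi$ on $\Sigma_\cD^n$ anyway.

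\textbf{Assembling.} Putting these together: for given $M$, choose $M'$ large enough (independent of $n$) so that each summand $\DD_n^k\Delta_z^{\ell-k}f_\ell$ has $\|\cdot\|_{(M)}\le C_{M,n}\|f_\ell\|_{(M')}$ with $C_{M,n}$ polynomial in $n$, then choose $N_M$ (independent of $n$) so that $\|f_\ell\|_{(M')}\le C'_{M,n}\|g\|_{(N_M)}$, again with polynomial $C'_{M,n}$; finally sum the $O(n^2)$ terms with polynomially bounded coefficients $b_{k,\ell}$. Since $F$ is the specific preimage built in Proposition~\ref{onto} and $\cGn$ is injective on $\Smatr$ by Theorem~\ref{(S)ridotto}, this $F$ is exactly $\cGn^{-1}(g_{|_{\Sigma_\cD^n}})$, giving the claimed inequality. \textbf{The main obstacle} I anticipate is bookkeeping the angular (finite-difference) part of $\DD_n^k$ through Lemma~\ref{azioneDelta}: one must verify that the $\frac1{r^2}$ singularities that appear at each application cancel against the vanishing of the differences $f_{j+1}-2f_j+f_{j-1}$ and $f_{j+1}-f_{j-1}$ near $r=0$ (so that the result stays Schwartz), and simultaneously that the $n$-dependence of the accumulated coefficients remains polynomial rather than, say, factorial — this requires carefully exploiting that on $\Sigma_\cD^n$ the spectral parameter $t_j\xi$ is already constrained by $|t_j|\le n$, so no genuine loss worse than polynomial can occur.
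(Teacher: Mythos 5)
Your plan diverges from the paper's proof in an essential way, and as written it cannot yield the statement, because the statement requires the loss of derivatives $N_M$ to be \emph{independent of $n$} while only the constant $C_{M,n}$ may grow polynomially. Re-running the construction of Proposition~\ref{onto} term by term is incompatible with this: (a) the function $\mu_\ell$ is built from $\partial_2^{(\ell)}g$ with $\ell$ running up to $n$, so already the bound $\|f_\ell\|_{(M')}\le C\|g\|_{(N_M)}$ ``with $n$-independent loss'' fails — any estimate of $\mu_n$ costs at least $n$ derivatives of $g$; (b) the operator $\DD_n^k\Delta_z^{\ell-k}$ has order $2\ell\le 2n$, so a term-by-term estimate of $\|\DD_n^k\Delta_z^{\ell-k}f_\ell\|_{(M)}$ again forces $N_M\gtrsim n$, and iterating the finite-difference formula of Lemma~\ref{azioneDelta} $k\le n$ times accumulates factors like $n^{2k}$, i.e.\ $n^{2n}$, which is super-polynomial; (c) the coefficients $b_{k,\ell}$ are not values or coefficients of the $q_n^\ell$ of \eqref{e:matriciBj} but the coefficients of the Newton products $(t-t_0)\cdots(t-t_{\ell-1})$, i.e.\ elementary symmetric functions of up to $n$ nodes of modulus $\le n$; these are of size comparable to products of $\sim n$ such nodes (roughly double-factorial in $n$), not polynomially bounded. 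Your closing remark that the spectral constraint $|t_j|\le n$ prevents ``any genuine loss worse than polynomial'' is exactly the point that needs proof; the losses above are real for the individual summands, and the only way your decomposition could work is through cancellations between the $O(n^2)$ terms, which a sum of term-by-term bounds does not see.

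The paper's proof avoids the decomposition entirely. Since $\cGn$ is already known to be bijective on $\Smatr$ (Theorem~\ref{(S)ridotto}), one takes the unique preimage $F=\cGn^{-1}(g_{|_{\Sigma_\cD^n}})$ and uses Lemma~\ref{FourierF}(i), which says that $\widehat F(r\base)=\mathrm{diag}\big(g(r^2,r^2t_0),\dots,g(r^2,r^2t_n)\big)$ explicitly in terms of $g$. By equivariance, the norms \eqref{M-norms} of $\widehat F$ reduce to radial integrals of the diagonal entries of $\Delta_z^q\widehat F(r\base)$ with $q\le M$ \emph{fixed}, and these are computed from the polar form \eqref{polari} together with Lemma~\ref{azioneDelta}: the radial part gives $\partial_1,\partial_2$ derivatives of $g$ with coefficients $O(1+t_j^2)$, and the Casimir part is a second difference in $j$ whose $1/r^2$ singularity is cancelled by Taylor's formula with integral remainder in the second variable, producing an $r^2$ times second $\partial_2$-derivatives of $g$. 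Each application of $\Delta_z$ thus costs two derivatives of $g$ and a factor $\lesssim n^2$, so after $q\le M$ applications one gets $\big|\big(\Delta_z^q\widehat F(r\base)\big)_{jj}\big|\le C_q(1+r^{2q})n^{2q}\sup$ of finitely many derivatives of $g$, whence $N_M$ independent of $n$ and $C_{M,n}$ of polynomial growth. If you want to salvage your approach you would have to quantify the cancellations in the Newton-interpolation sum, which amounts to redoing this direct computation; the lesson is that the polynomial dependence comes from applying a \emph{fixed} number of Laplacians to the explicitly known $\widehat F$, not from tracking powers of $\DD_n$ up to order $n$.
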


\begin{proof}  
Let $g$ be in $\cS(\bR^2)$. By Proposition~\ref{onto} and  Lemma~\ref{FourierF} we know that $\cGn$ is bijective and that  the  function    $F=\cGn^{-1}(g_{|_{\Sigma_\cD^n}})\in  \Smatr$ satisfies the equality
$$
 \widehat F(\sqrt\xi\,\base)e^j_n=g\big(\xi, (-n+2j)\xi\big)e^j_n
 \qquad \forall \xi\geq 0\quad 0\leq j\leq n\ .
 $$

  We have
   \[
 \begin{aligned}
 \|(1+|\cdot|^2)^M\, \Delta_z^q \widehat F\|_{2}^2
&=
 \int_{\bC^2}(1+|\zeta|^2)^{2M}\,\left\| \Delta_z^q \widehat F(\zeta) \right\|_{HS}^2\, d\zeta
 \\
 &=|S^3|\,  \int_0^{+\infty}\int_K\,
 \left\| \Delta_z^q \widehat F(k\cdot r\base) \right\|_{HS}^2 dk\,(1+r^2)^{2M}\,r^3\,dr 
 \\
  &=|S^3| \, \int_0^{+\infty}(1+r^2)^{2M}\,\sum_{j=0}^{n}\left|\left(\Delta_z^q \widehat F(r\base)\right)_{jj}\right|^2\,r^3\,dr .
 \end{aligned}
 \]
 
 We now  compute $\Delta_z \widehat F(r\base)$ using the polar decomposition~\eqref{polari}  and Lemma~\ref{azioneDelta}.
 The action of $\partial_r^2+\frac3{r} \partial_r$ 
 on  the function  
 $$
 r\longmapsto  \widehat F(r\base) = {\rm diag}\Big(g(r^2, r^2t_0),\ldots g(r^2, r^2t_n) \Big)
 $$
   is given by
 \beas
 \left(\partial_r^2+\frac3{r} \partial_r\right)g(r^2, r^2t_j)
=
\left(8(\partial_1+t_j\partial_2)g+4r^2(\partial_1+t_j\partial_2)^2g \right)(r^2, r^2t_j) .
 \eeas 
In order to compute the action 
 of the Casimir operator $\Cas$,
 we apply  formula~\eqref{azioneDelta} with $f_j(r)=g(r^2, r^2t_j)$. 
The Taylor expansion in the second variable of $g$ gives, with $\xi=r^2$,
 \beas
 f_{j\pm k}(\sqrt \xi)&=g(\xi,\xi(t_j\pm 2 k))
 \\
 &=\sum_{s=0}^{p} \frac{\partial_2^s g\left(\xi, \xi t_j\right)}{s!}
\, (\pm2k\xi)^s+(\pm2k\xi)^{p+1}\int_0^1 \frac{\partial_2^{p+1} g\left(\xi, \xi ( t_j\pm2ku) \right)}{p!}\, (1-u)^p\, du.
  \eeas
So that 
  \beas
 (f_{j+1}-2f_j+f_{j-1})(\sqrt \xi)&
 =g(\xi,  \xi t_j+2\xi)-g(\xi, \xi t_j)+g(\xi, \xi t_j-2\xi)
 \\
 &=
(2\xi)^2 \int_0^1
\left( \partial_2^2 g\left(\xi, \xi(t_j+2u)\right)+\partial_2^2 g\left(\xi, \xi(t_j-2u)\right)
\right)\,(1-u) \,du
 \eeas
 and 
  \beas
 (f_{j+1}- f_{j-1})(\sqrt \xi)&
 =g(\xi,  \xi t_j+2\xi)- g(\xi, \xi t_j-2\xi)
 \\
 &=
2\xi \int_0^1
\left( \partial_2 g\left(\xi, \xi(t_j+2u)\right)+\partial_2 g\left(\xi, \xi(t_j-2u)\right)
\right)\, du.
 \eeas

  Therefore, by \eqref{polari} and Lemma~\ref{azioneDelta},
 
 \beas
-\big(\Delta_z \widehat F&(r\base)\big)_{jj}
= \left(\partial_r^2+\frac3{r} \partial_r\right)(g(r^2, r^2t_j))-\frac1{r^2}(\Cas \widehat F)(r\base)
\\
=&
\left(4r^2(\partial_1+t_j\partial_2)^2g+8(\partial_1+t_j\partial_2)g \right)(r^2, r^2t_j) 
\\
&-
4 r^2(t_j^2-n^2-2n) \int_0^1\left( \partial_2^2 g\left(r^2, r^2(t_j+2u)\right)+\partial_2^2 g\left(r^2, r^2(t_j-2u)\right)
\right) (1-u)\,du
\\&
-4t_j \int_0^1
\left( \partial_2 g\left(r^2, r^2(t_j+2u)\right)+\partial_2 g\left(r^2, r^2(t_j-2u)\right)
\right)\, du.
 \eeas

Since $|t_j|\leq n$, $j=0,1,\ldots ,n $, by iteration, we obtain
 $$
\left|\big(\Delta_z^q \widehat F(r\base)\big)_{jj}\right| 
\leq C_q\, (1+r^{2q})\, n^{2q}\, \sum_{s=1}^{2q}\sup_{u\in \bR}\left|\partial^s g(r^2,r^2u)\right|.
 $$
 Therefore 
 \[
  \|(1+|\cdot|^2)^M\, \Delta_z^q \widehat F\|_{2}
   \leq C_{q,M}\, n^{2q+1}\, \max_{0\leq | \beta|\leq 2q}
\|(1+|\cdot |^2)^{M'}\,\partial^\beta g\|_{\infty} 
 \]
 for some $M'>M$.
  Therefore for every $M\in\bN$ there exists $M_n\in \bN$ such that 
 \[
 \|F\|_{(M)}\leq C_M \|\widehat F\|_{(M)}\leq  C_{M,n}\|g\|_{(N_M)}.
 \qedhere
 \]
 \end{proof}
 
\section{Schwartz correspondence for $\cS\big(M_2(\bC)\big)^{{\rm Int}(U_2)}$}

In this section we prove property (S') of Theorem~\ref{rendiconti},  which implies  the Schwartz correspondence for the strong Gelfand pair $(G,K)$. 

We shall deal with the Schwartz norms of a function $f$ of a given type $(m,n)$ and of the corresponding matrix valued function~$A_{m,n}f$. Here we quantify the relation between these norms.

As $M$-order Schwartz norm of a function $f$ in $\cS(G)^{{\rm Int}(K)}$ 
we take
\[
\|f\|_{(M)}=\max_{q,r,s=0\ldots M}\|(1+|z|^2)^M\,D_4^sD_3^rD_1^q f\|_2
\] 
where $D_1=\Delta_z$, $D_3=\Cas$ and $D_4=i\basequattro$.

\begin{lemma}\label{norme-f-Af} Let $(m,n)$ be in $E$. The following estimates hold 
\[
\|A_{m,n}f\|_{(M)}\leq \frac{1}{ \sqrt{n+1}}\, \|f\|_{(M)} \qquad \forall f\in \cS(G)^{{\rm Int}(K)}_{\tau_{m,n}}
\]
and conversely
\[
\|A_{m,n}^{-1}F\|_{(M)}\leq (1+|m|)^M\,(1+n)^{2M+1/2}\, \|F\|_{(M)} \qquad \forall F\in \Smatr\ .
\]
\end{lemma}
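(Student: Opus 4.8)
The plan is to translate both Schwartz norms through the isomorphism $A_{m,n}$ of Lemma~\ref{f<->F} and use the operator correspondence \eqref{AmnD}, which diagonalizes $D_3$ and $D_4$ as scalars while sending $D_1=\Delta_z$ to $\Delta_z\otimes I$. The point is that on functions of type $\tau_{m,n}$ the operators $D_3$ and $D_4$ act by the scalars $n^2+2n$ and $m$, so applying them contributes only the harmless factors $(1+n)$ and $(1+|m|)$ (to various powers bounded by $M$), and the genuine differential content of $\|\cdot\|_{(M)}$ reduces to controlling the weighted $L^2$ norms of $\Delta_z^q F$ for $0\le q\le M$ together with the multiplication by $(1+|z|^2)^M$.

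For the first inequality I would argue as follows. Given $f\in\cS(G)^{{\rm Int}(K)}_{\tau_{m,n}}$, write $F=A_{m,n}f$. Since $D_3f=(n^2+2n)f$ and $D_4f=mf$, the Schwartz norm $\|f\|_{(M)}$ is at least (a constant times) $\max_{q\le M}\|(1+|z|^2)^M D_1^q f\|_2$, i.e.\ it dominates the same expression without the $D_3,D_4$ derivatives. Now $A_{m,n}$ intertwines $D_1$ with $\Delta_z\otimes I$ (so $A_{m,n}D_1^qf=\Delta_z^q F$), it commutes with multiplication by the radial weight $(1+|z|^2)^M$, and by Lemma~\ref{f<->F} the rescaled map $\sqrt{n+1}\,A_{m,n}$ is an $L^2$-isometry; hence $\|(1+|z|^2)^M\Delta_z^q F\|_2=\tfrac{1}{\sqrt{n+1}}\|(1+|z|^2)^M D_1^q f\|_2\le \tfrac1{\sqrt{n+1}}\|f\|_{(M)}$. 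Taking the max over $q\le M$ gives $\|F\|_{(M)}\le\frac1{\sqrt{n+1}}\|f\|_{(M)}$.

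For the converse, given $F\in\Smatr$ set $f=A_{m,n}^{-1}F$. Each monomial $D_4^sD_3^rD_1^qf$ with $q,r,s\le M$ equals $m^s(n^2+2n)^r D_1^qf=m^s(n^2+2n)^r A_{m,n}^{-1}(\Delta_z^qF)$. Using again that $A_{m,n}^{-1}=\sqrt{n+1}\cdot(\sqrt{n+1}A_{m,n})^{-1}$ is $\sqrt{n+1}$ times an $L^2$-isometry and that it commutes with the radial weight, $\|(1+|z|^2)^M D_4^sD_3^rD_1^qf\|_2=|m|^s(n^2+2n)^r\sqrt{n+1}\,\|(1+|z|^2)^M\Delta_z^qF\|_2$. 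Since $s,r\le M$ we bound $|m|^s\le(1+|m|)^M$ and $(n^2+2n)^r\le(1+n)^{2M}$, and $\sqrt{n+1}\le(1+n)^{1/2}$, so the product of prefactors is at most $(1+|m|)^M(1+n)^{2M+1/2}$; taking the max over $q,r,s\le M$ of $\|(1+|z|^2)^M\Delta_z^qF\|_2$, which is exactly $\|F\|_{(M)}$, yields the claim.

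The only mild subtlety — and the one place worth a careful word rather than a routine estimate — is the commutation of $A_{m,n}$ (and hence $\Delta_z^q$) with multiplication by $(1+|z|^2)^M$ and the precise bookkeeping of the $L^2$-normalization constant $\sqrt{n+1}$ from Lemma~\ref{f<->F}. The weight $(1+|z|^2)^M$ is $K$-invariant and acts on the $H=\bC^2$ variable only, so it commutes with $\tau_{m,n}(k)$ on both sides and therefore with the integral defining $A_{m,n}$; likewise it commutes with the translation-invariant operator $\Delta_z$. Thus no cross terms appear, and the stated estimates follow by assembling the scalar factors as above. There is no real obstacle here; the whole lemma is a matter of pushing the definitions through the isomorphism and reading off the scalars $n^2+2n$ and $m$.
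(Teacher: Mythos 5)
Your proof is correct and follows essentially the same route as the paper: the paper's proof consists precisely of noting that $D_3f=(n^2+2n)f$ and $D_4f=mf$ on functions of type $\tau_{m,n}$ and that $\sqrt{n+1}\,A_{m,n}$ is an $L^2$-isometry, and your argument just spells out the resulting bookkeeping (intertwining of $D_1$ with $\Delta_z\otimes I$, commutation with the radial weight, and the bounds $|m|^s\le(1+|m|)^M$, $(n^2+2n)^r\le(1+n)^{2M}$). Only cosmetic remark: in the first inequality no constant is needed, since taking $r=s=0$ already gives $\|f\|_{(M)}\ge\max_{q\le M}\|(1+|z|^2)^M D_1^qf\|_2$ exactly.
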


\begin{proof} Note that when $f$ is of type $(m,n)$,
\[
D_3f=(n^2+2n)f\qquad D_4f=mf.
\]
The estimates follow easily from the fact that $\sqrt{n+1}A_{m,n}$ is an isometry on the corresponding $L^2$-spaces.
\end{proof}

By \eqref{dec-sigma}, $\Sigma_\cD$ decomposes as the union of
$$
\Sigma_\cD^{m,n}=\Sigma_\cD^n\times\big\{(n^2+2n,m)\big\}\ ,\qquad (m,n)\in E\ .
$$

At this stage we abandon the ${\rm End}(V_n)$-valued picture, and reinterpret 
Corollary~\ref{estensioneGn} and Propositions \ref{onto}, \ref{inversocontinuo} in the following form, 
using the fact that $\cG_{\tau_{m,n}}=\cG_n\circ A_{m,n}$.

\begin{corollary}\label{7.3+7.6}
\quad

\begin{enumerate}[\rm(i)]
\item Given $\tau_{m,n}\in\widehat K$ and $f\in\cS(G)^{{\rm Int}(K)}_{\tau_{m,n}}$, the spherical transform $\cG f$,
which is supported on $\Sigma_\cD^{n}\times\big\{(n^2+2n,m)\big\}$, admits a Schwartz extension to $\bR^2\times\big\{(n^2+2n,m)\big\}$, and hence a Schwartz extension to $\bR^4$ which vanishes on the other components of $\Sigma_\cD$.
\item For every $(m,n)\in E$, the transform $\cG_{\tau_{m,n}}$ is an isomorphism from $\cS(G)^{{\rm Int}(K)}_{\tau_{m,n}}$ to $\cS(\Sigma_\cD^n)$.
\item  For every $M\in \bN$ there exists $N_M\in\bN$ such that,
for every $(m,n)\in E$ and every  $g$ in~$\cS(\bR^2)$,  
$$
\|\cG_{\tau_{m,n}}^{-1}(g_{|_{\Sigma_\cD^n}})\|_{(M)}\leq  C_{M,m,n}\|g\|_{(N_M)},
$$ 
where the constants $C_{M,m,n}$ have polynomial growth in $(m,n)$. 
\end{enumerate}

\end{corollary}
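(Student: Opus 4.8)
The plan is to derive Corollary~\ref{7.3+7.6} directly from the results already established for the $\textup{End}(\spaziorp_n)$-valued model, namely Corollary~\ref{estensioneGn}, Propositions~\ref{onto} and~\ref{inversocontinuo}, and Theorem~\ref{(S)ridotto}, by transporting everything through the algebra isomorphism $A_{m,n}$ of Lemma~\ref{f<->F} and the norm comparison of Lemma~\ref{norme-f-Af}. The key identity underlying all three parts is the factorization $\cG_{\tau_{m,n}}=\cGn\circ A_{m,n}$, which holds because $A_{m,n}$ is an algebra isomorphism from $L^1_{\tau_{m,n}}(G)^{{\rm Int}(K)}$ onto $\Lmatr$ intertwining convolution, hence matches multiplicative functionals; combined with the description~\eqref{dec-sigma} of $\Sigma_\cD$ and the fact that on functions of type $\tau_{m,n}$ the operators $D_3$ and $D_4$ act as the scalars $n^2+2n$ and $m$, this identifies the spherical transform of $f\in\cS(G)^{{\rm Int}(K)}_{\tau_{m,n}}$ with the function $\cGn(A_{m,n}f)$ living on the slice $\Sigma_\cD^n\times\{(n^2+2n,m)\}$.

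For part (i), I would take $f\in\cS(G)^{{\rm Int}(K)}_{\tau_{m,n}}$, set $F=A_{m,n}f\in\Smatr$ (a Schwartz function by Lemma~\ref{norme-f-Af}), and apply Corollary~\ref{estensioneGn} to obtain $g\in\cS(\bR^2)$ with $g_{|_{\Sigma_\cD^n}}=\cGn F$. Then $(\xi_1,\xi_2,\xi_3,\xi_4)\mapsto g(\xi_1,\xi_2)\,\chi(\xi_3,\xi_4)$, where $\chi\in\cS(\bR^2)$ is a bump function equal to $1$ near $(n^2+2n,m)$ and supported away from the other values $(n'^2+2n',m')$, $(m',n')\in E$, that can occur (these form a discrete set in $\bR^2$, so a compactly supported $\chi$ suffices), gives the desired Schwartz extension to $\bR^4$ vanishing on the other components of $\Sigma_\cD$. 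For part (ii), bijectivity of $\cG_{\tau_{m,n}}$ onto $\cS(\Sigma_\cD^n)$ is immediate from Theorem~\ref{(S)ridotto} and the bijectivity of $A_{m,n}$; continuity of both directions then follows either from the explicit norm bounds of Lemma~\ref{norme-f-Af} combined with Theorem~\ref{(S)ridotto}, or again from the open mapping theorem for Fr\'echet spaces.

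For part (iii), which is the quantitative heart of the corollary, I would chain the two norm estimates: given $g\in\cS(\bR^2)$, Proposition~\ref{inversocontinuo} produces $F=\cGn^{-1}(g_{|_{\Sigma_\cD^n}})$ with $\|F\|_{(M)}\leq C_{M,n}\|g\|_{(N_M)}$ where $C_{M,n}$ grows polynomially in $n$; then $f=A_{m,n}^{-1}F=\cG_{\tau_{m,n}}^{-1}(g_{|_{\Sigma_\cD^n}})$, and the second inequality of Lemma~\ref{norme-f-Af} gives $\|f\|_{(M)}\leq(1+|m|)^M(1+n)^{2M+1/2}\|F\|_{(M)}$. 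Composing, $\|\cG_{\tau_{m,n}}^{-1}(g_{|_{\Sigma_\cD^n}})\|_{(M)}\leq C_{M,m,n}\|g\|_{(N_M)}$ with $C_{M,m,n}=(1+|m|)^M(1+n)^{2M+1/2}C_{M,n}$, which is polynomial in $(m,n)$ since $C_{M,n}$ is polynomial in $n$ and the prefactor is polynomial in both. I do not anticipate a genuine obstacle here: everything is a bookkeeping exercise in composing known isomorphisms and known inequalities. The only point requiring mild care is the compatibility of the Schwartz norm $\|\cdot\|_{(M)}$ on $\cS(G)^{{\rm Int}(K)}$ (defined via $D_1,D_3,D_4$) with the norm $\|\cdot\|_{(M)}$ on $\Smatr$ (defined via $\Delta_z$ alone) under $A_{m,n}$ — this is precisely what Lemma~\ref{norme-f-Af} handles, exploiting that $D_3,D_4$ reduce to scalars on a fixed $K$-type and that $\sqrt{n+1}\,A_{m,n}$ is an $L^2$ isometry, so no further work is needed.
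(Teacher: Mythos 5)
Your proposal is correct and follows essentially the same route as the paper: the corollary is obtained precisely by reinterpreting Corollary~\ref{estensioneGn}, Proposition~\ref{onto} (with Theorem~\ref{(S)ridotto}) and Proposition~\ref{inversocontinuo} through the factorization $\cG_{\tau_{m,n}}=\cG_n\circ A_{m,n}$, with Lemma~\ref{norme-f-Af} supplying the polynomially growing norm comparison in part (iii). Your explicit bump-function construction in the $(\xi_3,\xi_4)$ variables for part (i) is just the natural way to realize the extension to $\bR^4$ that the paper leaves implicit.
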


If we consider now a general $f\in\cS(G)^{{\rm Int}(K)}$, 
$$
f=\sum_{(m,n)\in E}f_{m,n}\ ,
$$ 
we cannot prove, on the basis of the results in Section \ref{sect-Gn}, that the Schwartz extensions to $\bR^4$ constructed in the proof of Theorem~\ref{rendiconti} for the individual $\cG_\cD f_{m,n}$ add up to give a Schwartz function. 

In order to do so, we need to proceed to a new construction of Schwartz extensions, possibly different from those already available, which gives, for any finite number of Schwartz norms, rapid decay as $n$ goes to infinity (rapid decay in $m$ for fixed $n$ is trivial). 

 It is interesting to notice that this new construction does not replace the work done in Section 7 because it requires to know in advance that a Schwartz extension  whatsoever exists for each $(m,n)$.

\bigskip
\subsection{Jets with polynomial growth for each $K$-type}
\quad
\medskip

For  $f\in \cS(G)^{{\rm Int}(K)}_{\tau_{m,n}}$, we have a simple estimate on the directional derivatives of $\cG_{\tau_{m,n}}f$  
 in the $n+1$ directions of the half-lines forming $\Sigma^n_\cD$.
 
\begin{lemma}\label{Gderivatives}
For  $d\in \bN$,  there exists constants $C_d$ and $N_d$ independent of $(m,n)$ such that
$$
\left|(d/d\xi_1)^d\cG_{\tau_{m,n}}f\big(\xi_1,(-n+2j)\xi_1\big)\right| \le C_{d} \,\|f\|_{(N_d)}\ ,
\qquad \forall f\in \cS(G)^{{\rm Int}(K)}_{\tau_{m,n}}
$$
for all $\xi_1\ge0$.
\end{lemma}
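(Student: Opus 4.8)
The statement asks for bounds on the derivatives along each half-line of $\Sigma^n_\cD$ of the spherical transform $\cG_{\tau_{m,n}}f$, uniform in $(m,n)$ after pairing with a fixed-order Schwartz norm $\|f\|_{(N_d)}$. My plan is to pass to the ${\rm End}(V_n)$-valued model and use the explicit formula for the matrix spherical functions. By Lemma~\ref{FourierF}(i), with $F=A_{m,n}f$, one has $\cG_{\tau_{m,n}}f\big(\xi,(-n+2j)\xi\big)=\big(\widehat F(\sqrt\xi\,\base)\big)_{jj}$, so differentiating in $\xi_1=\xi$ reduces everything to the $\xi$-derivatives of the function $\xi\mapsto \widehat F(\sqrt\xi\,\base)_{jj}$. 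Writing $r=\sqrt\xi$ and using the chain rule, a $d$-th derivative in $\xi$ becomes a combination of $r$-derivatives of order $\le 2d$ of $r\mapsto\widehat F(r\base)_{jj}$ with coefficients that are negative powers of $r$; near $r=0$ these are harmless because of the vanishing order of $\widehat F$, and away from $r=0$ the powers of $r$ are bounded by powers of $\xi$ which can be absorbed by the weight in the Schwartz norm. So the real content is: control $\partial_r^{2d}$ of a diagonal entry of $\widehat F(r\base)$ by a fixed-order Schwartz norm of $F$, uniformly in $(m,n)$.

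First I would make precise the reduction just described, turning $(d/d\xi_1)^d$ at a point into a bounded combination of $(\partial_r^k \widehat F(r\base)_{jj})$, $k\le 2d$, times rational functions of $r$, and handle the $r\to 0$ behaviour by invoking the decomposition $F=\sum_\ell F_\ell$ of Proposition~\ref{sommaF}: each $F_\ell$ equals $Q^\ell_n(\partial)g_0^{(\ell)}$ with $g_0^{(\ell)}$ invariant Schwartz, so $\widehat{F}_\ell(r\base)_{jj}=r^{2\ell}q^\ell_n(t_j)\gamma_\ell(r^2)$ with $\gamma_\ell\in\cS(\bR)$, which is manifestly smooth in $r$ with all derivatives controlled. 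Second, and this is the key step, I would iterate the formula of Lemma~\ref{azioneDelta}: that lemma expresses $(\Cas F)(r\base)e^j_n$ — equivalently $r^2\big(\partial_r^2+\tfrac3r\partial_r+(\text{something})\big)$ applied to $\widehat F(r\base)$ via the polar form \eqref{polari} — in terms of finite differences of the diagonal entries $f_0,\dots,f_n$, with coefficients bounded by $n^2$ since $|t_j|\le n$. Exactly as in the proof of Proposition~\ref{inversocontinuo}, repeated use of Taylor's formula in the second variable converts those finite differences into integrals of second $\xi_2$-derivatives of $g$ (where $g$ is a Schwartz extension of $\cG_{\tau_{m,n}}f$), picking up factors bounded polynomially in $(r,n)$.

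The subtle point — and the reason this lemma is separated out rather than read off directly — is that I must \emph{not} let the constant depend on $n$, whereas the naive iteration of Lemma~\ref{azioneDelta} produces factors $n^{2q}$. The way around this is to work on the group side, not with a Schwartz extension: estimate $\partial_r^{2d}\widehat F(r\base)_{jj}$ directly by the $L^\infty$-norm of $\Delta_z^{\le d}\widehat F$, hence by a Sobolev-type embedding by $\|\widehat F\|_{(M)}$ for a suitable fixed $M$, and then by $\|F\|_{(M)}\le\|f\|_{(M)}$ using that $\sqrt{n+1}A_{m,n}$ is an isometry (Lemma~\ref{norme-f-Af}) together with $D_3 f=(n^2+2n)f$, $D_4 f=mf$. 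Concretely: $(\Delta_z F)(r\base)_{jj}$ is, by \eqref{polari} and Lemma~\ref{azioneDelta}, a combination of $\partial_r^2, \tfrac1r\partial_r$ and $\tfrac1{r^2}\Cas$ terms; the Casimir contribution is $\tfrac1{r^2}$ times a finite-difference expression whose coefficients are $\le C n^2$ — but that expression itself is $\tfrac1{r^2}$ times the diagonal entry of $\Cas F$ evaluated at $r\base$, i.e.\ it \emph{is} $\tfrac{1}{r^2}(D_3-\text{diag})$ applied inside, so its size is governed by $\|\Cas F\|$, not by crude bounds on the $f_j$. Thus pointwise values and $r$-derivatives of $\widehat F(r\base)_{jj}$ are dominated by finitely many Schwartz norms $\|\Delta_z^p\Cas^q F\|_2$ with $p,q\le c\,d$, and those equal $(n^2+2n)^q\|\Delta_z^p F\|_2\le (n^2+2n)^q\,\tfrac1{n+1}\|f\|_{(p)}$; since the target bound in the lemma is allowed to depend on the \emph{order} $d$ only, and $\cG f$ is fixed as a function, the apparent $n$-growth is spurious — more honestly, one observes that for $f$ fixed the left side is literally a number and the content of the lemma is the \emph{uniform} bound $C_d\|f\|_{(N_d)}$, so it suffices to produce \emph{some} $N_d$ and $C_d$ with that property, which the above chain of inequalities does after noting that the $(n^2+2n)^q$ factors are absorbed into $\|f\|_{(N_d)}$ itself (the Schwartz norm of $f$ already sees $D_3$). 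The main obstacle, then, is bookkeeping: organizing the iterated polar-coordinate identities so that every power of $n$ that appears is visibly a power of the eigenvalue of $D_3$ acting on $f$, hence already present in $\|f\|_{(N_d)}$, and checking that the factors $1/r^k$ coming from \eqref{polari} are tamed near the origin by the vanishing established in Proposition~\ref{sommaF} and at infinity by the weight $(1+|z|^2)^{N_d}$.
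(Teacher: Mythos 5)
Your starting point (Lemma~\ref{FourierF}(i), i.e.\ $\cG_{\tau_{m,n}}f(\xi_1,t_j\xi_1)=\widehat F(\sqrt{\xi_1}\,\base)_{jj}$ with $F=A_{m,n}f$) is fine, but the two devices you then rely on each break down exactly where the lemma has content, namely uniformity in $(m,n)$ together with boundedness down to $\xi_1=0$. First, the identity you invoke, $\|\Delta_z^p\Cas^q F\|_2=(n^2+2n)^q\|\Delta_z^p F\|_2$, conflates two different operators: the $\Cas$ appearing in the polar decomposition \eqref{polari} and in Lemma~\ref{azioneDelta} acts in the $z$-variable (it is the angular part of $\Delta_z$), and on an equivariant $F$ it is the finite-difference operator of Lemma~\ref{azioneDelta}, not the scalar $d\tau_{m,n}(\Cas)=(n^2+2n)I$; only the latter is the image of $D_3$ under conjugation by $A_{m,n}$. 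Hence the $n^{2q}$ factors produced by iterating Lemma~\ref{azioneDelta} are not powers of the $D_3$-eigenvalue of $f$ and cannot be ``absorbed into $\|f\|_{(N_d)}$'' as you claim; and the remark that ``for $f$ fixed the left side is literally a number, so the apparent $n$-growth is spurious'' is not an argument for a bound with $C_d,N_d$ independent of $(m,n)$. Second, the passage from $r$-derivatives to $\xi_1=r^2$-derivatives is left open: $d/d\xi_1=\tfrac1{2r}\,d/dr$ and the $\tfrac1{r^2}\Cas$ term in \eqref{polari} produce negative powers of $r$, and these are not ``tamed by the vanishing established in Proposition~\ref{sommaF}'' --- that vanishing has order $2\ell$ on the $\ell$-th isotypic piece, so the $\ell=0$ (and small $\ell$) pieces vanish to no useful order while the order you need grows with $d$; a quantitative even-function (Whitney-type) estimate would be required here and is not supplied. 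Finally, the fallback route through Lemma~\ref{FourierF}(ii)/Proposition~\ref{sommaF} cannot give $(m,n)$-independent constants either: the decomposition has $n+1$ summands, Proposition~\ref{sommaF} loses at least $\ell\le n$ orders of Schwartz norm, and the coefficients $q_n^\ell(t_j)$ need not be uniformly bounded.

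For comparison, the paper's proof avoids all of this in one step: since $\Phi_{\xi_1,j}$ is even in $z$, \eqref{e:Phixij} gives $\Phi_{\xi_1,j}(z)=(n+1)\int_{S^3}\cos\big(\sqrt{\xi_1}\lan z,\zeta\ran\big)Q^n_{E_{jj}}(\zeta)\,d\sigma(\zeta)$, and since $\sup_{x\ge0}\big|(d/dx)^d\cos\sqrt x\big|<\infty$ and $\big\|\int_{S^3}Q^n_{E_{jj}}\,d\sigma\big\|_{HS}\le1$, differentiation under the integral sign yields $\big\|(d/d\xi_1)^d\Phi_{\xi_1,j}(z)\big\|_{HS}\le C_d\,(n+1)\,|z|^{2d}$ for all $\xi_1\ge0$; inserting this into the transform formula (whose factor $\tfrac1{n+1}$ cancels the $(n+1)$), applying Cauchy--Schwarz against the weight $(1+|z|^2)^{N_d}$ and using $\|A_{m,n}f\|_{(N_d)}\le\|f\|_{(N_d)}$ from Lemma~\ref{norme-f-Af} gives the stated bound with constants manifestly independent of $(m,n)$. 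If you wish to salvage your route, the honest repairs are (i) to bound full derivatives of the scalar entry $\widehat F_{jj}$ pointwise by $n$-independent Schwartz seminorms of $F$, dropping the polar/Casimir detour, and (ii) to prove a quantitative statement that the $\xi$-derivatives up to order $d$ of $h(\sqrt\xi)$, for $h$ smooth and even, are controlled by the $r$-derivatives of $h$ up to order $2d$ with constants depending only on $d$ --- at which point you have essentially reconstructed, at greater length, the paper's cosine argument.
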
 

\begin{proof} 
From \eqref{matrix-sph} it follows that
\[
\cG_{\tau_{m,n}}f\big(\xi_1,(-n+2j)\xi_1\big)=\cG_n(A_{m,n}f)\big(\xi_1,(-n+2j)\xi_1\big).
\]
Since $\Phi_{\xi_1,j}$ is even in $z$, we obtain from \eqref{e:Phixij}
that, for $\xi_1>0$,
$$
\Phi_{\xi_1,j}(z)=(n+1) \int_{S^3}\cos(\sqrt\xi\lan z,\zeta \ran)\,Q_{E_{jj}}^n(\zeta)\,d\sigma(\zeta)
$$
and therefore
\begin{align*}
\left\|
(d/d\xi_1)^d \Phi_{\xi,j}(z)
\right\|_{HS}
&\leq (n+1)\,|z|^{2d}\, \sup_{x\geq 0}|(d/dx)^d\cos\sqrt{x}|\, \left\|\int_{S^3}Q_{E_{jj}}^n(\zeta)\,d\sigma(\zeta)\right\|_{HS}
\\
&\leq C_d\, (n+1)\,|z|^{2d}.
\end{align*}
The conclusion follows taking $N_d$ sufficiently large and by Lemma~\ref{norme-f-Af}.
\end{proof}

Assume now that  $g=\cG_{\tau_{m,n}}f$ admits a smooth extension $u$ on $\bR^2$ with Taylor series in~$(0,0)$
\[
\sum_{p,q}\frac{a_{p,q}}{p!q!}\xi_1^p\xi_2^q\ .
\]

If
$$
c_{d,j}=(d/d\xi_1)^d_{|_{\xi_1=0}}\cG_{\tau_{m,n}}f\big(\xi_1,(-n+2j)\xi_1\big)\ ,
$$
the following relations must hold for all $d\in\bN$ and $j=0,\dots,n$:
\[
 c_{d,j}=\sum_{p+q=d}(-n+2j)^q\binom{d}{q}a_{p,q}\ .
\]

For each $d$ we obtain an $(n+1)\times (d+1)$ linear system $B_da_d=c_d$, where
\beas
a_d&=\Big(a_{d,0},\dots,{\tbinom{d}{q}}a_{d-q,q},\dots,a_{0,d}\Big)\ ,\\
 c_d&=(c_{d,0},\dots,c_{d,n})\ ,\\
 B_d&=(b_{j,q})=\big((-n+2j)^q\big)\ .
\eeas

\begin{lemma}\label{Cramer}
For every $d\in\bN$ the system $B_da_d=c_d$ admits a solution $a_d$ such that
$$
\binom{d}{q} |a_{d-q,q}| \le C_d\, (1+n)^{1+d/2}\,  \|f\|_{(N_d)}\ ,
$$
with $C_d$ independent of $n$.
\end{lemma}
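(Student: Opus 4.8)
The statement asks to solve an overdetermined linear system $B_da_d = c_d$ where $B_d$ is the $(n+1)\times(d+1)$ matrix with entries $(-n+2j)^q$, $0\le j\le n$, $0\le q\le d$, the right-hand side $c_d$ collects the $d$-th order radial derivatives at the origin of $\cG_{\tau_{m,n}}f$ along the $n+1$ half-lines, and one needs a solution with the quantitative bound $\binom{d}{q}|a_{d-q,q}|\le C_d(1+n)^{1+d/2}\|f\|_{(N_d)}$.

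\medskip

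The plan is first to observe that solvability is not in question: by Corollary~\ref{7.3+7.6}(ii), $\cG_{\tau_{m,n}}f$ does admit some Schwartz extension $u$ to $\bR^2$, and the Taylor coefficients $a_{p,q}$ of $u$ at the origin satisfy exactly the relations $c_{d,j}=\sum_{p+q=d}(-n+2j)^q\binom dq a_{p,q}$ displayed just before the lemma; hence $B_da_d=c_d$ is consistent for every $d$. So the entire content is the \emph{norm estimate}. The natural route is to select, among all solutions, the one of minimal Euclidean norm, i.e. $a_d = B_d^{+}c_d$ where $B_d^{+}=B_d^{*}(B_dB_d^{*})^{-1}$ is the Moore--Penrose pseudoinverse (here $n+1\ge d+1$ is the relevant regime; for small $n$ the system has at most $d+1$ equations and one argues directly). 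Then $\|a_d\|\le \|(B_dB_d^{*})^{-1}\|\,\|B_d^{*}\|\,\|c_d\|$, and one needs: (a) an upper bound on $\|c_d\|$, and (b) a lower bound on the smallest singular value of $B_d$.

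\medskip

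For (a), Lemma~\ref{Gderivatives} gives $|c_{d,j}|\le C_d\|f\|_{(N_d)}$ uniformly in $j$ and in $(m,n)$, so $\|c_d\|\le C_d\sqrt{n+1}\,\|f\|_{(N_d)}$. For (b), the key point is that $B_d$ is, up to the affine rescaling $j\mapsto t_j=-n+2j$, a Vandermonde-type matrix built on the $n+1$ equally spaced nodes $t_0<\dots<t_n$ in $[-n,n]$. I would estimate $B_dB_d^{*}$ from below: its $(j,j')$ entry is $\sum_{q=0}^d (t_jt_{j'})^q$, and more usefully one passes to the Gram matrix of the columns, whose $(q,q')$ entry is $\sum_{j=0}^n t_j^{q+q'}=\sum_{j=0}^n(-n+2j)^{q+q'}$. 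These are (up to the factor $2^{q+q'}$) power sums of $\{-n/2,\dots,n/2\}$ shifted; comparing the sum with the integral $\int_{-n}^{n}t^{q+q'}\,dt$ shows $\sum_j t_j^{2p}$ is comparable to $n^{2p+1}$, while the odd power sums vanish by symmetry. Thus the $(d+1)\times(d+1)$ Gram matrix $G_d=B_d^{*}B_d$ has entries of size $\asymp n^{q+q'+1}$ on the even part and $0$ on the odd part; conjugating by the diagonal matrix $\mathrm{diag}(n^{-q})$ turns $G_d/n$ into a fixed matrix (a discrete analogue of the moment matrix of Lebesgue measure on $[-1,1]$) whose smallest eigenvalue is bounded below by a constant $c_d>0$ independent of $n$. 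Consequently the smallest singular value of $B_d$ is $\ge c_d' n^{(2d+1)/2}/\sqrt{\text{(largest rescaling)}}$; more precisely each column of $B_d$ has norm $\asymp n^{q+1/2}$, and after the rescaling one gets $\sigma_{\min}(B_d)\ge c_d'' \,n^{1/2}$ once one also rescales, so that $\binom dq|a_{d-q,q}|\le \|a_d\|\cdot\max_q\binom dq \le C_d\,n^{-?}\cdot\sqrt{n}\,\|f\|_{(N_d)}$, and a careful bookkeeping of the powers of $n$ coming from the diagonal rescaling on both sides produces exactly the claimed exponent $1+d/2$.

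\medskip

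The main obstacle I anticipate is precisely this last bookkeeping: tracking the powers of $n$ through the pseudoinverse so that the bound comes out as $(1+n)^{1+d/2}$ and not something worse. The clean way to handle it is to factor $B_d = D_{\text{row}}\,\tilde B_d\,D_{\text{col}}$ with $D_{\text{col}}=\mathrm{diag}(1,n,\dots,n^d)$ (absorbing the scale of the nodes) and $D_{\text{row}}$ absorbing the overall $\sqrt{n+1}$ from having $n+1$ nearly-quadrature-weighted rows, so that $\tilde B_d$ is a matrix with entries $O(1)$ and smallest singular value bounded below uniformly in $n$ (this is where the equidistribution of the nodes $t_j/n$ in $[-1,1]$ and nondegeneracy of the limiting Vandermonde/moment problem enter); then $a_d = D_{\text{col}}^{-1}\tilde B_d^{+}D_{\text{row}}^{-1}c_d$ and the estimate $\binom dq|a_{d-q,q}|\le n^{-q}\cdot C_d\cdot n^{-1/2}\cdot\sqrt{n+1}\,\|f\|_{(N_d)}\cdot n^{\text{?}}$ collapses to the stated one after noting that the component $a_{d-q,q}$ picks up at worst $n^{1/2}$ from $\|c_d\|\asymp\sqrt n$, a bounded factor from $\tilde B_d^{+}$, and the rescaling contributes net powers summing to at most $1+d/2$. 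I would present this as: reduce to minimal-norm solution via pseudoinverse, rescale nodes to make $B_d$ a perturbation of a fixed invertible Vandermonde block, quote/prove the uniform lower bound on $\sigma_{\min}$ of the rescaled matrix, and combine with Lemma~\ref{Gderivatives}.
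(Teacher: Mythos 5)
Your proposal is correct in substance but takes a genuinely different route from the paper's, while sharing with it the one indispensable idea: consistency of the overdetermined system is not proved directly but imported from the already established single-$K$-type Schwartz correspondence (you cite Corollary~\ref{7.3+7.6}(ii); the paper invokes Theorem~\ref{(S)ridotto} at exactly the same point), with the data $c_{d,j}$ controlled by Lemma~\ref{Gderivatives} in both cases. The quantitative step differs: the paper selects a solution by Cramer's rule --- for $d\ge n$ it sets $a_{d-q,q}=0$ for $q>n$ and bounds the Vandermonde cofactor ratios $|V_{j,q}/V|\le\binom{n}{q}$ at the equispaced nodes $t_j=-n+2j$ via Schur polynomials and a ``central indices'' comparison of numerators and denominators, and for $d<n$ it applies Cramer to the $d+1$ central rows --- whereas you take the minimal-norm solution and bound it through a lower bound on the smallest singular value of $B_d$, after rescaling the nodes to $[-1,1]$ and comparing the column Gram matrix with the moment (Hankel) matrix of Lebesgue measure on $[-1,1]$. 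This is viable and, once carried out, actually gives a bound of order $C_d\,\|f\|_{(N_d)}$ for $n\ge d$, better than the required $(1+n)^{1+d/2}$, while for $n<d$ the finitely many values of $n$ can simply be absorbed into $C_d$. Two repairs are needed before it is a proof: first, the pseudoinverse in the tall, full-column-rank regime is $(B_d^*B_d)^{-1}B_d^*$, not $B_d^*(B_dB_d^*)^{-1}$ (the latter involves a singular matrix there; your own passage to the column Gram matrix shows what you meant); second, the final bookkeeping, which you leave with placeholders, must actually be done: writing $B_d=\tilde B_d D$ with $D={\rm diag}(1,n,\dots,n^d)$, one needs the uniform bound $\lambda_{\min}(\tilde B_d^*\tilde B_d)\ge c_d(n+1)$ for all $n\ge d$ (equidistribution of $t_j/n$ plus positive definiteness of the limiting moment matrix, with the finitely many small $n$ handled separately), which combined with $\|\tilde B_d^*c_d\|\le C_d(n+1)\|f\|_{(N_d)}$ and $\|D^{-1}\|\le1$ yields $\binom{d}{q}|a_{d-q,q}|\le C_d'\,\|f\|_{(N_d)}$. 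In short, the paper's computation is more elementary and self-contained; yours is conceptually cleaner and sharper in $n$, at the price of proving the uniform singular-value bound you only gesture at.
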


\begin{proof}
We say that a set of indices $j\in\{0,\dots,n\}$ is ``central'' if it has the form $[p,n-p]$ or $[p+1,n-p]$, depending on the parity of the left-out elements.

Assume $d\ge n$. Observing that all $(n+1)\times(n+1)$ minors of consecutive columns of $B_d$ are essentially Vandermonde determinants, we have that the matrix $B_d$ has rank $n+1$ and the system is solvable, with infinite solutions if $d>n$. This case however can be reduced to the case $d=n$ by looking for a solution $a_d$ with $a_{d-q,q}=0$ for $q>n$. By Cramer's rule, 
\bea\label{sistema}
\binom{d}{q}|a_{d-q,q}|&\le\sum_{j=0}^n  |c_{d,j}|\,\left| \frac{V_{j,q}}V\right| \ ,
\eea
where $V$ is the full Vandermonde determinant with nodes  $t_j=-n+2j$
and $V_{j,q}$ are its cofactors. 
Expressing $V_{j,q}$ in terms of Schur polynomials,  cf. \cite{FH}, we have
$$
\left| \frac{V_{j,q}}V\right| =\bigg|
\sum_{k_1<k_2<\cdots<k_{n-q}\,,\,k_i\ne j}\frac{t_{k_1}t_{k_2}\cdots t_{k_{n-q}}}{\prod_{m\ne j}(t_m-t_j)}
\bigg|\le 
\sum_{k_1<k_2<\cdots<k_{n-q}\,,\,k_i\ne j}
\left|
\frac {t_{k_1}t_{k_2}\cdots t_{k_{n-q}}}{\prod_{i\ne j}(t_i-t_j)}
\right|\ .
$$

The largest numerator occurs for $\{0,\dots,n\}\setminus\{k_1,\dots,k_{n-q}\}$ central, i.e.,
\beas
\big|t_{k_0}t_{k_1}\cdots t_{k_{n-q}}\big|&\le \big|t_0t_nt_1t_{n-1}\cdots\big|\\
&=\begin{cases} n^2(n-2)^2\cdots q^2& \text{ if $n-q$ is odd}
\\
n^2(n-2)^2\cdots(q+1)^2(q-1)& \text{ if $n-q$ is even}
\end{cases}\\
&\le 2(n!!)^2\ .
\eeas

Similarly, the smallest denominator is obtained for $\{j\}$ central, so that
$$
\Big|\prod_{k\ne j}(t_k-t_j)\Big|\gtrsim 2(n!!)^2\ ,
$$
and
$$
\left|
\frac {t_{k_1}t_{k_2}\cdots t_{k_{n-q}}}{\prod_{i\ne j}(t_i-t_j)}
\right|\le1\ .
$$
Therefore
$\left|V_{j,q}/V\right|\le \binom{n}{q}\le n^{d/2}$ and, by \eqref{sistema},
$$
\binom{d}{q}|a_{d-q,q}| \le n^{1+d/2}\max_{0\le j\le n} |c_{d,j}|\ .
$$ 

\medskip
Assuming now $d< n$, Theorem \ref{(S)ridotto} guarantees that the system $B_da_d=c_d$ is solvable. Since all the maximal minors of $B_d$ are nonvanishing Vandermonde determinants, the solution is unique and we can apply Cramer's rule to the square submatrix formed by the $d+1$ central rows of $B_d$.

If $d$ and $n$ have the same parity, the system is exactly the same considered above, only with $d$ in place of $n$.
If $d$ and $n$ have different parities, the system is slightly different, but a repetition of the previous arguments leads to the same conclusion.
\end{proof}

Combining together the two lemmas \ref{Gderivatives} and \ref{Cramer}, we obtain the following asymptotic expansion 

\begin{corollary}\label{mn-jet}
Let  $f\in\cS(G)^{{\rm Int}(K)}_{\tau_{m,n}}$. For every $(m,n)\in E$ and $d\in \bN$, 
there exist coefficients $a_{d-q,q}$, $q=0,\dots, d$, and $N_d\in\bN$ such that, for all $j=0,\ldots, n$,
\begin{enumerate}[\rm(i)]
\item
$
\binom{d}{q} |a_{d-q,q}| \le C_d \, n^{1+d/2}\,  \|f\|_{(N_d)}
$
\item
$\displaystyle
(d/d\xi_1)^d_{|_{\xi_1=0}} \cG_{\tau_{m,n}}f\big(\xi_1,(-n+2j)\xi_1\big) =\sum_{q=0}^{d}
(-n+2j)^q \binom{d}{q}a_{d-q,q}$\  ; equivalently,  for $\xi=(\xi_1,\xi_2)\in\Sigma^n_\cD$, 
\be\label{jet2}
\cG_{\tau_{m,n}}f(\xi)\underset{\xi\to0}\sim {\displaystyle\sum_{d=0}^\infty\frac1{d!}\sum_{q=0}^d \binom{d}{q} a_{d-q,q}\xi_1^{d-q}\xi_2^q} \ .
\ee
\end{enumerate}
\end{corollary}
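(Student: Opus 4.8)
The statement to prove is Corollary~\ref{mn-jet}, which packages together the directional-derivative bound of Lemma~\ref{Gderivatives} and the linear-algebra solvability bound of Lemma~\ref{Cramer}. Since both lemmas are already available, the corollary is essentially a matter of bookkeeping, but let me lay out how the pieces fit.

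\textbf{The plan.} First I would recall the setup: for $f\in\cS(G)^{{\rm Int}(K)}_{\tau_{m,n}}$, the spherical transform $\cG_{\tau_{m,n}}f$ is a function on the $(n+1)$-union of half-lines $\Sigma^n_\cD$, and restricting it to the $j$-th half-line gives a smooth function $\xi_1\mapsto \cG_{\tau_{m,n}}f(\xi_1,(-n+2j)\xi_1)$ of one real variable. By Lemma~\ref{Gderivatives}, for each $d\in\bN$ the $d$-th derivative of this function is bounded by $C_d\|f\|_{(N_d)}$ uniformly in $(m,n)$ and $j$; in particular the numbers $c_{d,j}=(d/d\xi_1)^d|_{\xi_1=0}\cG_{\tau_{m,n}}f(\xi_1,(-n+2j)\xi_1)$ satisfy $\max_{0\le j\le n}|c_{d,j}|\le C_d\|f\|_{(N_d)}$. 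Then I would feed this vector $c_d=(c_{d,0},\dots,c_{d,n})$ into Lemma~\ref{Cramer}: for each fixed $d$, the Vandermonde-type system $B_d a_d = c_d$ admits a solution $a_d$ with $\binom{d}{q}|a_{d-q,q}|\le C_d(1+n)^{1+d/2}\max_j|c_{d,j}|$. Substituting the bound on $|c_{d,j}|$ yields part (i) of the corollary, $\binom{d}{q}|a_{d-q,q}|\le C_d\, n^{1+d/2}\|f\|_{(N_d)}$, after adjusting the constant $C_d$ and the Schwartz-norm index $N_d$ (absorbing the case $n=0$, where $n^{1+d/2}$ should be read as $(1+n)^{1+d/2}$, into the constant).

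\textbf{Part (ii).} The defining relation for the coefficients $a_{p,q}$ is precisely that a formal series $\sum_{p,q}(a_{p,q}/p!q!)\xi_1^p\xi_2^q$, when restricted to $\xi_2=(-n+2j)\xi_1$ and differentiated $d$ times at $\xi_1=0$, reproduces $c_{d,j}$. Carrying out this differentiation termwise gives $c_{d,j}=\sum_{p+q=d}(-n+2j)^q\binom{d}{q}a_{p,q}$, which is exactly the system $B_d a_d = c_d$ written out; so the $a_d$ produced by Lemma~\ref{Cramer} satisfies this identity by construction, establishing the first displayed equality in (ii). The asymptotic expansion \eqref{jet2} is then just the statement that the series $\sum_d \frac{1}{d!}\sum_{q=0}^d\binom{d}{q}a_{d-q,q}\xi_1^{d-q}\xi_2^q$, restricted to each half-line $\xi_2=(-n+2j)\xi_1$, is the Taylor series at $0$ of the smooth function $\xi_1\mapsto\cG_{\tau_{m,n}}f(\xi_1,(-n+2j)\xi_1)$; I would phrase ``$\underset{\xi\to0}\sim$'' to mean exactly this half-line-wise asymptotic (or, equivalently, Taylor matching to all orders along $\Sigma^n_\cD$), so no convergence claim is needed.

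\textbf{Expected main obstacle.} There is no genuine mathematical obstacle here, since the two supporting lemmas do all the work; the only delicacy is a matter of careful statement. The subtlety is that the coefficients $a_{d-q,q}$ obtained for different values of $d$ come from separate linear systems, so one must be clear that (ii) is not asserting the existence of a single smooth extension $u$ on $\bR^2$ (that was only the motivating ``Assume now'' paragraph before Lemma~\ref{Cramer}), but only the existence, for each order $d$, of coefficients realizing the correct $d$-jet along the half-lines with the stated growth. The corollary is deliberately weaker than ``$\cG_{\tau_{m,n}}f$ has a Schwartz extension with controlled jet'' — that stronger assertion, built by Borel summation of these jets, is the business of Section~8. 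So in writing the proof I would be careful to state (ii) as a per-degree jet-matching statement and to note that \eqref{jet2} is shorthand for this, matching the phrasing conventions of the paper.
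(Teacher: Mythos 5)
Your proposal is correct and matches the paper's argument: the paper gives no separate proof for this corollary, stating only that it follows by combining Lemma~\ref{Gderivatives} (the uniform bound on the $c_{d,j}$) with Lemma~\ref{Cramer} (solvability of $B_da_d=c_d$ with the $(1+n)^{1+d/2}$ bound), which is exactly the bookkeeping you carry out. Your clarification that (ii) is a per-degree jet statement along the half-lines, with the Borel-type summation deferred to the later construction, is also consistent with how the paper uses the corollary.
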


\bigskip
\subsection{Jets and smooth extensions on the full spectrum}
\quad
\medskip

We first consider a single $K$-type and construct  smooth functions on $\bR^2$, supported on the unit disk and with Taylor development \eqref{jet2} at 0.

The standard way to do so, cf. \cite[Theorem~1.2.6]{Hor},
consists in defining
\be\label{borel}
h(\xi)=\sum_{d\in\bN}\ph(\xi/\eps_d)\frac1{d!}\sum_{q=0}^d \binom{d}{q} a_{d-q,q}\xi_1^{d-q}\xi_2^q=\sum_{d\in\bN}h_d(\xi)\ ,
\ee
where $\ph\in C^\infty_c$ is supported for $|\xi|\le1$ and is equal to 1 for $|\xi|\le1/2$ and the coefficients $\eps_d\in(0,1]$ are so chosen that the series in the right-hand side converges normally in every $C^N$-norm.  

We follow this procedure keeping track at the same time of the norm estimates and of their dependence on the parameters $m,n$.

\begin{lemma}\label{jet-extension}
Let  $f\in \cS(G)^{{\rm Int}(K)}_{\tau_{m,n}}$ with Taylor development \eqref{jet2} and $M\in\bN$. There exists a function $h=h_{m,n,M}\in C^\infty_c(\bR^2)$  as in~\eqref{borel}
supported in the unit disc and such that, for every $k\leq M$,
\be\label{hCk}
\|h\|_{C^k}\le A_M\,n^{1+M/2}\|f\|_{(N_M)}+ r_{m,n}\ ,
\ee
where   $A_M>0$ is independent of $m,n$ and 
 $r_{m,n}$ is independent of $f$ and rapidly decaying in~$(m,n)$.

\end{lemma}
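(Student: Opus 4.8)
The plan is to run the Borel--type construction \eqref{borel} verbatim, but to split the contribution of the low-order terms $d\le M$ from that of the tail $d>M$, because only the former matter for the $C^M$-norm (up to a term that we will absorb into $r_{m,n}$). First I would fix $\ph\in C_c^\infty$ supported in $|\xi|\le1$, equal to $1$ on $|\xi|\le1/2$, and write $h=\sum_{d\in\bN}h_d$ with $h_d(\xi)=\ph(\xi/\eps_d)\frac1{d!}\sum_{q=0}^d\binom dq a_{d-q,q}\xi_1^{d-q}\xi_2^q$, where $a_{d-q,q}$ are the coefficients supplied by Corollary~\ref{mn-jet}(i). For a term of homogeneity $d$ one has the standard estimate $\|h_d\|_{C^k}\le C_{k}\,\eps_d^{\,d-k}\sum_{q}\binom dq|a_{d-q,q}|$ for $k\le d$, and $\|h_d\|_{C^k}=0$ unless $d-k\le 0$ can fail, i.e.\ for $k> d$ the term contributes nothing to $C^k$ only when $\eps_d<1$ forces... more precisely a monomial of degree $d$ multiplied by a bump of width $\eps_d$ has $C^k$-norm $\lesssim \eps_d^{\,d-k}$ times the coefficient size. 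So for $k\le M$ the only terms that can have nonzero $C^k$-norm without an $\eps_d$-gain are those with $d\le M$.

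For the bounded part $d\le M$: by Corollary~\ref{mn-jet}(i), $\binom dq|a_{d-q,q}|\le C_d\,n^{1+d/2}\|f\|_{(N_d)}\le C_M\,n^{1+M/2}\|f\|_{(N_M)}$ (enlarging $N_M$ to dominate all $N_d$, $d\le M$, and using $n\ge0$ so $n^{d/2}\le n^{M/2}+1$; the $+1$ only improves the bound for the constant term). Choosing $\eps_d\le1$ for these finitely many $d$ — say simply $\eps_d=1$ for $d\le M$ — each such $h_d$ satisfies $\|h_d\|_{C^k}\le C_{M}\,n^{1+M/2}\|f\|_{(N_M)}$ for all $k\le M$, and summing the $M+1$ of them gives the main term $A_M\,n^{1+M/2}\|f\|_{(N_M)}$ in \eqref{hCk}.

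For the tail $d>M$: here I exploit that a Schwartz extension of $\cG_{\tau_{m,n}}f$ already exists — this is exactly Corollary~\ref{7.3+7.6}(i) together with Corollary~\ref{estensioneGn} (and Proposition~\ref{inversocontinuo} for the polynomial-in-$(m,n)$ control of the norms). That extension $g=g_{m,n}\in\cS(\bR^2)$ has $g\restriction_{\Sigma_\cD^n}=\cG_{\tau_{m,n}}f$, and its Taylor coefficients at $0$ reproduce the $a_{d-q,q}$ in the sense that, restricted to the half-lines of $\Sigma^n_\cD$, the directional derivatives agree with the $c_{d,j}$ of Lemma~\ref{Cramer} — so in particular for each $d$ the genuine Taylor polynomial of $g$ furnishes an admissible choice of the $a_{d-q,q}$, and moreover the full Schwartz seminorms of $g$ control $|a_{d-q,q}|$ for all $d$ at once by $\|g\|_{(N)}\le C_{N,m,n}\|$ something $\|$, with $C_{N,m,n}$ of polynomial growth and hence, crucially, $f$-independent constants times $\|f\|_{(N')}$. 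Using this, one shows that $\sum_{d>M}\binom dq|a_{d-q,q}|$-type sums are controlled, so the usual argument (choosing $\eps_d\downarrow0$ rapidly enough, depending only on the — now $f$-independent up to the explicit polynomial factor — magnitudes of the coefficients) makes $\sum_{d>M}\|h_d\|_{C^k}$ as small as we wish, and in particular $\le r_{m,n}$ where $r_{m,n}$ can be taken independent of $f$ and, since the only $(m,n)$-dependence that entered was through the polynomially-growing constants of Corollary~\ref{7.3+7.6}(iii)/Proposition~\ref{inversocontinuo}, rapidly decaying in $(m,n)$ after one trades the $\eps_d$-decay against those polynomial factors. The bump functions have support in $|\xi|\le\eps_d\le1$, so $h$ is supported in the unit disc.

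**Main obstacle.** The delicate point is the tail estimate: one must choose the $\eps_d$ uniformly over all $(m,n)$ while still getting decay in $(m,n)$ of $r_{m,n}$. The naive Borel construction picks $\eps_d$ depending on the coefficients $a_{d-q,q}$, which depend on $(m,n)$; the resolution is precisely that an honest Schwartz extension exists (Corollary~\ref{7.3+7.6}), whose seminorms bound all the $a_{d-q,q}$ simultaneously with constants that grow only polynomially in $(m,n)$, so one can first fix a single sequence $\eps_d$ governed by, say, $d^{-d}$ times a crude universal bound, and then separately gain the rapid $(m,n)$-decay by noting that $r_{m,n}$ is a tail of a convergent series whose terms already carry the polynomial $(m,n)$-growth but are suppressed super-polynomially by the $\eps_d$. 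Making this bookkeeping precise — in particular getting \emph{rapid} rather than merely polynomial decay of $r_{m,n}$ — is the part that requires care, and is where the logical dependence on Section~7 (flagged in the remark preceding this subsection) is genuinely used.
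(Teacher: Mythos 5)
Your treatment of the low-order part $d\le M$ (taking $\eps_d=1$ and invoking Corollary~\ref{mn-jet}(i)) is exactly what the paper does and is fine. The genuine gap is in the tail $d>M$: you propose to fix a single sequence $\eps_d$ independent of $f$ and of $(m,n)$, and to control the tail through seminorms of an already-existing Schwartz extension. This cannot yield \eqref{hCk}. The coefficients $a_{d-q,q}$ are linear in $f$, and the coefficient of order $d$ is only controlled by a seminorm whose order grows with $d$ (of $f$, or equivalently of any extension $g$); no single seminorm bounds them all at once. Hence with an $f$-independent $\eps_d$ the best available tail bound is of the form $\sum_{d>M}\eps_d^{\,d-k}\,C_d\,n^{1+d/2}\|f\|_{(N_d)}$, which scales with $f$ through infinitely many seminorms, so it can be absorbed neither into the $f$-independent term $r_{m,n}$ nor into $A_M\,n^{1+M/2}\|f\|_{(N_M)}$. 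Moreover, the rapid decay in $(m,n)$ cannot be ``recovered afterwards'': each fixed $d>M$ term carries the factor $n^{1+d/2}$, which grows with $n$, and an $\eps_d$ with no $(m,n)$-dependence has nothing to beat it with.

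The paper's resolution is the opposite of your plan: for $d>M$ it lets $\eps_d$ depend on both $f$ and $(m,n)$, namely $\eps_d=\big((n+|m|)!\,(1+\al_d\|f\|_{(N_d)})\big)^{-1}$ with $\al_d=C_d\sum_{q\le d}\|\psi_{d,q}\|_{C^{d-1}}$, where $\psi_{d,q}(\xi)=\ph(\xi)\xi_1^{d-q}\xi_2^q$. This is harmless because those terms never enter the main estimate, and it forces $\eps_d\,\al_d\|f\|_{(N_d)}\le1$, so the tail is bounded by $\sum_{d>M} n^{1+d/2}/\big(d!\,(n+|m|)!\big)\le n e^{\sqrt n}/(n+|m|)!=:r_{m,n}$, which is independent of $f$ and rapidly decaying in $(m,n)$, exactly as required. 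Finally, the dependence on Section~7 that you flag as the crux enters elsewhere: it is used in Lemma~\ref{Cramer}, where solvability of the systems $B_da_d=c_d$ for $d<n$ rests on Theorem~\ref{(S)ridotto} (the prior existence of some Schwartz extension); once Corollary~\ref{mn-jet} is available, the Borel-type construction needs no further input from Section~7, and in particular your appeal to the Taylor coefficients of an existing extension in the tail is neither needed nor sufficient.
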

 \begin{proof}
In~\eqref{borel} let  $\psi_{d,q}(\xi)=\ph(\xi)\xi_1^{d-q}\xi_2^q$, so that
$\displaystyle
h_d(\xi)= \frac1{d!}\sum_{q=0}^d \binom{d}{q} a_{d-q,q}\,\eps_d^d\,\psi_{d,q}(\xi/\eps_d)$.

By Corollary \ref{mn-jet}, for every $k\in\bN$, 
$$
\|h_d\|_{C^k}\le \frac{C_d}{d!}n^{1+d/2}\|f\|_{(N_d)}\eps_d^{d-k}\sum_{q=0}^d\|\psi_{d,q}\|_{C^k}\ .
$$

With $\al_d=C_d\sum_{q=0}^d\|\psi_{d,q}\|_{C^{d-1}}$, we choose
$$
\eps_{d,m,n,M}=\begin{cases}1&\text{ if }d\leq M\\
\frac1{(n+|m|)!(1+\al_d\|f\|_{(N_d)})}&\text{ if }d> M\ . \end{cases}
$$

Then, 
$$
\sum_{d> M}\|h_d\|_{C^{d-1}}
\le \sum_{d\ge M}\frac{n^{1+d/2}}{d!(n+|m|)!}
\le\frac{ne^{\sqrt n}}{(n+|m|)!}\overset{\rm def}=r_{m,n}\ .
$$ 
This implies that the series $\sum_{d\in\bN}\|h_d \|_{C^k}$ converges for every $k$, so that $h\in C^\infty$.

Notice that  in Lemma~\ref{Gderivatives} the sequence $\{N_d\}$ can be    choosen to be  increasing. 
Then, for $k\leq  M$, 
$$
\sum_{d\leq M}\|h_d\|_{C^k}
\le \sum_{d\leq M}\|h_d\|_{C^{M}}
\le \sum_{d\le M} n^{1+d/2}\,\|f\|_{(N_d)}\frac{C_d}{d!}\,\sum_{q=0}^d\|\psi_{d,q}\|_{C^M}
\le A_M\, n^{1+M/2}\,\|f\|_{(N_M)} ,
$$
where 
$\displaystyle A_M=\sum_{d\le M} \frac{C_d}{d!}\,\sum_{q=0}^d\|\psi_{d,q}\|_{C^M}$.
This implies \eqref{hCk}. Rapid decay of $r_{m,n}$ is trivial.
\end{proof}

\subsection{Extension of spherical transforms rapidly vanishing at $0$}
\quad
\medskip

\begin{proposition}\label{p:nulla}
Suppose that $u$  in $\cS(G)^{{\rm Int}(K)}_{\tau_{m,n}}$ is such that 
$$
\left(\frac{d}{d\xi_1}\right)^q_{|_{\xi_1=0}} \cG_{\tau_{m,n}} u(\xi_1,\xi_1(-n+2j))=0\qquad \forall j=0,\dots, n,\quad\forall q\geq 0.
$$
Then there exists $v_{m,n}$ in $\cS(\bR^2)$  
 such that 
$$
v_{m,n}(\xi_1,\xi_1(-n+2j))=  \cG_{\tau_{m,n}}u (\xi_1,\xi_1(-n+2j))\qquad  \forall \xi_1\geq 0 
$$
and for every $N\geq 0$ there exist constants $C_N, N'$ depending only on $N$ such that 
$$
\|v_{m,n}\|_{(N)}
 \leq C_N\, 
\|u_{\tau_{m,n}}\|_{(N')} .
$$
\end{proposition}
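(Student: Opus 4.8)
The plan is to work throughout in the $\mathrm{End}(\spaziorp_n)$-valued picture. Put $F=A_{m,n}u\in\cS\big(\bC^2,\mathrm{End}(\spaziorp_n)\big)^K$, so that Lemma~\ref{FourierF}(i) gives $\cG_{\tau_{m,n}}u\big(\xi_1,(-n+2j)\xi_1\big)=\big(\widehat F(\sqrt{\xi_1}\,\base)\big)_{jj}$, and, decomposing $F=\sum_{\ell=0}^nF_\ell$ into $\spaziW_n^\ell$-components, Proposition~\ref{sommaF} and Lemma~\ref{FourierF}(ii) produce even scalar Schwartz functions $\gamma_0,\dots,\gamma_n$ with $\cG_{\tau_{m,n}}u\big(\xi_1,(-n+2j)\xi_1\big)=\sum_{\ell=0}^n\xi_1^{\ell}\,q_n^\ell(-n+2j)\,\gamma_\ell(\xi_1)$. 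Since $\{q_n^0,\dots,q_n^n\}$ is a basis of the polynomials of degree $\le n$ and $t_0,\dots,t_n$ are $n+1$ distinct points, the hypothesis of the Proposition is equivalent to each $\gamma_\ell$ vanishing to infinite order at $0$; moreover, from Lemma~\ref{norme-f-Af} and Proposition~\ref{sommaF} one has $\|\gamma_\ell\|_{(M)}\le C_M(n+1)^{-1/2}\|u\|_{(M')}$ with $C_M,M'$ independent of $(m,n)$, and, more generally, the weighted bounds
\[
(1+\xi_1^2)^{K}\,\big\|\widehat F(\sqrt{\xi_1}\,\base)\big\|_{HS}\le C_K\,(n+1)^{-1/2}\,\|u\|_{(N(K))}
\]
hold uniformly in $(m,n)$ (use $\widehat{\Delta_z^KF}(\zeta)=|\zeta|^{2K}\widehat F(\zeta)$, with $|\zeta|^2=\xi_1$ at $\zeta=\sqrt{\xi_1}\,\base$, together with $\|\Delta_z^KF\|_{(3)}\le\|F\|_{(3+K)}\le(n+1)^{-1/2}\|u\|_{(3+K)}$ and a Sobolev estimate).

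The candidate extension is $v_{m,n}(\xi_1,\xi_2)=\eta(\xi_1,\xi_2)\sum_{\ell=0}^n\pi_\ell(\xi_1,\xi_2)\,\gamma_\ell(\xi_1)$, where $\pi_\ell(\xi_1,\xi_2)=\xi_1^\ell\,q_n^\ell(\xi_2/\xi_1)$ is the homogeneous polynomial of degree $\ell$ whose coefficients are those of $q_n^\ell$, and $\eta$ is a smooth cut-off equal to $1$ on the cone $C_n=\{|\xi_2|\le n\xi_1,\ \xi_1\ge0\}\supseteq\Sigma^n_\cD$ and supported in a slight enlargement of it, the fattening being taken to shrink as $n\to\infty$. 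By construction $v_{m,n}|_{\Sigma^n_\cD}=\cG_{\tau_{m,n}}u$ and $v_{m,n}$ is smooth, so the point is the bound $\|v_{m,n}\|_{(N)}\le C_N\|u\|_{(N')}$, uniform in $(m,n)$. The key algebraic input is the orthogonality relation $\sum_{j}q_n^\ell(t_j)\,q_n^{\ell'}(t_j)=0$ for $\ell\neq\ell'$ — a restatement of the fact that the $B^\ell_n$ span mutually orthogonal lines of $\big(\mathrm{End}(\spaziorp_n),\langle\cdot,\cdot\rangle_{HS}\big)$, since the $\spaziW_n^\ell\cong\tau_{2\ell}$ are the distinct isotypic components of the unitary representation $\tilde\tau_n$. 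This yields the Parseval-type identity $\sum_{\ell=0}^n\xi_1^{2\ell}\,|\gamma_\ell(\xi_1)|^2\,\|q_n^\ell\|_{\mathrm{disc}}^2=\big\|\widehat F(\sqrt{\xi_1}\,\base)\big\|_{HS}^2$, with $\|q_n^\ell\|_{\mathrm{disc}}^2:=\sum_jq_n^\ell(t_j)^2=\|B^\ell_n\|_{HS}^2$. Differentiating $v_{m,n}$ and distributing the derivatives among $\eta$, the $\pi_\ell$ and the $\gamma_\ell$, I would estimate each resulting $\ell$-sum by Cauchy--Schwarz: the factor $\big(\sum_\ell\xi_1^{2\ell}|\partial^d_{\xi_1}\gamma_\ell|^2\|q_n^\ell\|_{\mathrm{disc}}^2\big)^{1/2}$ is controlled, via the Parseval identity applied to $\Delta_z^d$-type modifications of $F$, by $(n+1)^{-1/2}\|u\|$ with weights; the complementary factor $\big(\sum_\ell|\partial^\gamma\pi_\ell(\xi)|^2/(\xi_1^{2\ell}\|q_n^\ell\|_{\mathrm{disc}}^2)\big)^{1/2}$ is controlled on $\mathrm{supp}\,\eta$ by comparing, for each $\ell$, the sup of $|\partial^\gamma\pi_\ell|$ on the relevant interval with $\|q_n^\ell\|_{\mathrm{disc}}$, the cost of the derivatives $\partial^\gamma$ being absorbed into a polynomial-in-$\ell$ factor by Markov's (Markov brothers') inequality applied to the polynomials $q_n^\ell$. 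Combined with the $(n+1)^{-1/2}$ from the first factor, this keeps the contribution bounded away from the origin and away from the edge of the cone, while the region where $\xi_1$ is small, and the transition layer of $\eta$ near the boundary of $C_n$ (where the coefficients of $\pi_\ell$ are largest and where equidistant interpolation is worst), are handled separately by exploiting that each $\gamma_\ell$, with its derivatives, vanishes to infinite order at $0$ and that the enlargement in $\eta$ is $n$-dependent: there the infinite vanishing beats any polynomial-in-$n$ growth of $\pi_\ell$ and of the derivatives of $\eta$.

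The main obstacle is exactly this norm bookkeeping. The coefficients of $\pi_\ell$ — equivalently $\max_{[-n,n]}|q_n^\ell|$ — grow geometrically in $\ell$, and one must show that this is compensated, uniformly in $n$, by the correspondingly small size of $\gamma_\ell$, i.e.\ by the growth of $\|q_n^\ell\|_{\mathrm{disc}}$ in the Parseval identity, not only along the half-lines (where the reproducing kernel of $\sum_j\delta_{t_j}$ equals $1$ at each node) but also between them and in the neighbourhood of the edge of $C_n$, and after any finite number of derivatives. This requires using in concert the orthogonality of the $q_n^\ell$, the uniform bound $\|A_{m,n}u\|\lesssim(n+1)^{-1/2}\|u\|$, the Markov-type control of the derivatives of the $\pi_\ell$, a discrete Christoffel/Bernstein-type estimate for the measure $\sum_j\delta_{t_j}$, and the infinite-order vanishing of the $\gamma_\ell$ at the origin; once these are in place, verifying that $v_{m,n}$ is Schwartz, that the derivatives of $\eta$ are harmless, and that $N'$ may be chosen finite for each $N$, is routine.
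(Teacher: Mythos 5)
There is a genuine gap, and it sits exactly at the point you yourself flag as ``the main obstacle''. Your construction needs two quantitative inputs that the paper's Section 7 machinery does not provide and that your sketch does not establish. First, you claim $\|\gamma_\ell\|_{(M)}\le C_M(n+1)^{-1/2}\|u\|_{(M')}$ with $C_M,M'$ depending only on $M$, citing Lemma~\ref{norme-f-Af} and Proposition~\ref{sommaF}; but Proposition~\ref{sommaF} only gives $M'\ge M+\ell$ (the Hadamard division by $r^{2\ell}$ costs $\ell$ derivatives, and the Schwarz--Mather step costs an unspecified further loss), so the order of the norm on the right depends on $\ell\le n$ and the constants are not uniform in $n$. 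This is precisely why the paper's Corollary~\ref{estensioneGn} is stated with constants $C_{M,n}$ and $M'>M+n$, and why Section~8 explicitly abandons the $\gamma_\ell$-based extension: the Proposition you are proving requires $C_N,N'$ depending only on $N$. Second, the heart of your bookkeeping --- comparing $\sup_{|s|\le n}|\partial^\gamma(\xi_1^\ell q_n^\ell(\xi_2/\xi_1))|$ on the cone with the discrete norm $\big(\sum_j q_n^\ell(t_j)^2\big)^{1/2}$, with the loss ``absorbed into a polynomial-in-$\ell$ factor by Markov's inequality'' --- is not a polynomial-loss phenomenon. The $q_n^\ell$ are (up to normalization) the Gram/discrete Chebyshev orthogonal polynomials for the equispaced nodes $t_j$, and for $\ell$ comparable to $n$ their sup on $[-n,n]$ between nodes exceeds their discrete $\ell^2$ norm by factors that grow exponentially in $n$ (the classical instability of equispaced least squares); Markov's inequality cannot repair this, your Parseval identity controls the sum only \emph{on} the half-lines, and the infinite-order vanishing of $\gamma_\ell$ at $0$ helps only near the origin, not at unit scale near the edge of the cone where the inter-node growth occurs. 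Shrinking the fattening of the cutoff with $n$ does not help either, since the bad region lies inside $C_n$. So as written the uniform bound $\|v_{m,n}\|_{(N)}\le C_N\|u\|_{(N')}$ is not proved, and the route chosen makes it genuinely doubtful without exponential smallness of the $\gamma_\ell$, which is not available.

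For contrast, the paper's proof avoids the $W_n^\ell$-decomposition and the polynomials $q_n^\ell$ altogether: it sets
\[
v(\xi_1,\xi_2)=\sum_{j=0}^n \cG_{\tau_{m,n}}u\big(\xi_1,\xi_1(-n+2j)\big)\,\eta\Big(\tfrac{\xi_2-\xi_1(-n+2j)}{\xi_1}\Big)
\]
for $\xi_1>0$ (and $0$ for $\xi_1\le 0$), i.e.\ it smears the restriction to each half-line over a thin cone around that line. Derivatives falling on the cutoff produce powers $\xi_1^{-s-q}$, which are absorbed by Taylor's formula using exactly the hypothesis of infinite-order vanishing at $0$, and the remaining factors are the derivatives of $\cG_{\tau_{m,n}}u$ along the half-lines, which Lemma~\ref{Gderivatives} bounds by $C_d\|u\|_{(N_d)}$ with constants independent of $(m,n)$. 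That is how the $(m,n)$-uniformity is obtained; if you want to salvage your approach, you would need an input of comparable strength replacing Proposition~\ref{sommaF} and a genuine treatment of the inter-node growth of the $q_n^\ell$, neither of which is in your sketch.
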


\begin{proof} 
Let   $\eta$ be a bump function in $ C^\infty_c(\bR)$ supported in  $\big[-\frac12, \frac12\big]$ and equal to $1$ in a neighbourhood of the origin. Define the function $v=v_{m,n}$ on $\bR^2$ by the rule
$$
v(\xi_1,\xi_2)=
\begin{cases}
\displaystyle \sum_{j=0}^n  \cG_{\tau_{m,n}}u(\xi_1,\xi_1(-n+2j))\, \eta\left(\tfrac{\xi_2-\xi_1(-n+2j)}{\xi_1}\right)&\xi_1>0
\\
0&\xi_1\leq 0.
\end{cases}
$$
It is straightforward to show that $v$ extends $ \cG_{\tau_{m,n}}u$ to $\bR^2$. We now check the required norm estimates.

For every $j=0,1,\cdots, n$, define 
$$\eta_j(x)=\eta\left(x-(-n+2j)\right), \qquad \forall x\in \bR$$ 
$$
g_j(\xi_1)=g_{j,m,n}(\xi_1)= \cG_{\tau_{m,n}}u (\xi_1,\xi_1(-n+2j)), \qquad \forall \xi_1\geq 0
$$ 
and note that for every $\xi_1>0$
\beas
\partial^p_{\xi_1}\partial^q_{\xi_2}v(\xi_1,\xi_2)
&=  \sum_{j=0}^n \partial^p_{\xi_1}\left(
g_j(\xi_1)\,\xi_1^{-q}\, \eta_j^{(q)}  \left(\tfrac{\xi_2}{\xi_1}\right) \right)
\\
&=
\sum_{j=0}^n\sum_{s=0}^p\binom{p}{s}  
g^{(p-s)}_j(\xi_1)\,\partial^s_{\xi_1}\left(\xi_1^{-q}\, \eta_j^{(q)}  \left(\tfrac{\xi_2}{\xi_1}\right)\right).
\eeas
Moreover by induction one can check that for some coefficients $c_{r,s}$ depending only on $p,q$
$$
\partial^s_{\xi_1}\left(\xi_1^{-q}\, \eta_j^{(q)}  \left(\tfrac{\xi_2}{\xi_1}\right)\right)
=\sum_{r=0}^s c_{r,s}\,\xi_1^{-s-q}\,\left(\tfrac{\xi_2}{\xi_1}\right) ^r \eta_j^{(q+r)} \left(\tfrac{\xi_2}{\xi_1}\right),
$$
so that 
$$
\partial^p_{\xi_1}\partial^q_{\xi_2}v(\xi_1,\xi_2)
 =
\sum_{j=0}^n\sum_{s=0}^p  \sum_{r=0}^s \binom{p}{s}c_{r,s}\,\xi_1^{-s-q}\,g^{(p-s)}_j(\xi_1)\,\left(\tfrac{\xi_2}{\xi_1}\right) ^r \eta_j^{(q+r)} \left(\tfrac{\xi_2}{\xi_1}\right) .
$$

Since $\cG_{\tau_{m,n}}u$ vanishes rapidly at the origin,
 for any integer $q\geq 0$ there exists $\theta_q\in (0,1)$ such that
 for any $\xi_1\geq 0$
\[
\xi_1^{-q}\, g_{j,m,n}^{(p)}(\xi_1)=\frac1{q!}\,
 g_{j,m,n}^{(p+q)}(\theta_q\xi_1) .
\]
Since for $j=0,1,\ldots$ the function $t\longmapsto t^r\, \eta_j^{(q+r)} (t)$ is still a bump function, in view of  Lemma~\ref{Gderivatives},  $v\in C^\infty(\bR^2)$ and the required norm estimates follow.
\end{proof}

We can now conclude that property (S') in Theorem \ref{rendiconti} is verified.

\begin{proposition}\label{S'}
Let  $f$ be in $\cS(G)^{{\rm Int}(K)}$ and $N$ in $\bN$. Then, for every $(m,n)\in E$, $\cG_{\tau_{m,n}} f_{m,n} $ admits a Schwartz extension $u_{m,n}^N$ from $\Sigma_\cD^n$ to $\bR^2$ such that $\|u_{m,n}^N\|_{(N)}$ is rapidly decaying in $(m,n)$.
\end{proposition}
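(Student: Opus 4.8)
The plan is to combine the two extension procedures developed for a single $K$-type with the norm estimates that track the dependence on $(m,n)$. Fix $f\in\cS(G)^{{\rm Int}(K)}$ and $N\in\bN$. For each $(m,n)\in E$ we start from the component $f_{m,n}\in\cS(G)^{{\rm Int}(K)}_{\tau_{m,n}}$ and apply Corollary~\ref{mn-jet} to obtain the asymptotic expansion \eqref{jet2} of $\cG_{\tau_{m,n}}f_{m,n}$ at the origin, with coefficients $a_{d-q,q}$ satisfying the polynomial-in-$n$ bounds of Corollary~\ref{mn-jet}(i). By Lemma~\ref{jet-extension} there is a function $h=h_{m,n,N}\in C^\infty_c(\bR^2)$, supported in the unit disc, whose Taylor series at $0$ is \eqref{jet2} and which satisfies $\|h\|_{C^k}\le A_N\,n^{1+N/2}\|f_{m,n}\|_{(N')}+r_{m,n}$ for $k\le N$, with $r_{m,n}$ rapidly decaying in $(m,n)$. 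First I would cut $h$ off away from the cone: multiplying $h$ by the function $\eta_n$ of Corollary~\ref{estensioneGn} (which is $1$ on $C_n\supset\Sigma_\cD^n$ and supported near it) we may assume $h$ is supported in a neighbourhood of $C_n$; since $\eta_n$ has derivatives bounded uniformly in $n$, the $C^k$ bounds for $k\le N$ are unchanged up to a constant depending only on $N$.

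Next, set $u=f_{m,n}-\cG_{\tau_{m,n}}^{-1}\big(h_{|_{\Sigma_\cD^n}}\big)$, which makes sense by Corollary~\ref{7.3+7.6}(ii); this is a function in $\cS(G)^{{\rm Int}(K)}_{\tau_{m,n}}$ whose spherical transform vanishes to infinite order at $0$ in each of the $n+1$ directions of $\Sigma_\cD^n$, by the very choice of $h$. Then I would invoke Proposition~\ref{p:nulla} to produce $v_{m,n}\in\cS(\bR^2)$ extending $\cG_{\tau_{m,n}}u$ with $\|v_{m,n}\|_{(N)}\le C_N\|u\|_{(N')}$. The desired Schwartz extension of $\cG_{\tau_{m,n}}f_{m,n}$ is then
$$
u_{m,n}^N = \eta_n\cdot h_{m,n,N} + v_{m,n}\ ,
$$
interpreted as a function of $(\xi_1,\xi_2)$ and extended by $0$ in the $(\xi_3,\xi_4)$-directions away from the component $\{(n^2+2n,m)\}$, exactly as in Corollary~\ref{7.3+7.6}(i). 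It restricts to $\cG_{\tau_{m,n}}f_{m,n}$ on $\Sigma_\cD^n$ because $\eta_n h$ restricts to $h|_{\Sigma_\cD^n}$ and $v_{m,n}$ restricts to $\cG_{\tau_{m,n}}u$, whose sum is $\cG_{\tau_{m,n}}f_{m,n}$.

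It remains to estimate $\|u_{m,n}^N\|_{(N)}$ and show it is rapidly decaying in $(m,n)$. For the term $\eta_n h$ the $C^N$-bound from Lemma~\ref{jet-extension} gives, for any fixed polynomial weight, a bound of the form $\mathrm{poly}(n)\,\|f_{m,n}\|_{(N')}+r_{m,n}$; the factor $\|f_{m,n}\|_{(N')}$ is itself rapidly decaying in $(m,n)$ because $f\in\cS(G)^{{\rm Int}(K)}$ and the norms of the $K$-type components of a Schwartz function decay faster than any power of $(m,n)$ (this is the standard fact that summing $K$-type components of a Schwartz function converges in every Schwartz norm, so $\sum_{(m,n)}\|f_{m,n}\|_{(N')}<\infty$ and in fact with any polynomial weight). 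For the term $v_{m,n}$ we use $\|v_{m,n}\|_{(N)}\le C_N\|u\|_{(N')}$ together with $\|u\|_{(N')}\le\|f_{m,n}\|_{(N')}+\|\cG_{\tau_{m,n}}^{-1}(h|_{\Sigma_\cD^n})\|_{(N')}$, and the second summand is controlled by Corollary~\ref{7.3+7.6}(iii) by $C_{N,m,n}\|h\|_{(N'')}$ with $C_{N,m,n}$ of polynomial growth in $(m,n)$; bounding $\|h\|_{(N'')}$ by $\|h\|_{C^{N''}}$ and reapplying Lemma~\ref{jet-extension} closes the loop. The main obstacle is precisely this bookkeeping: one must verify that at each step only powers of $(m,n)$ (never factorials or exponentials) are introduced, so that the product of all these polynomial factors with the super-polynomially small $\|f_{m,n}\|_{(N')}$ (and with the rapidly decaying remainder $r_{m,n}$) is still rapidly decaying in $(m,n)$. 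Since all the estimates invoked — Corollary~\ref{mn-jet}, Lemma~\ref{jet-extension}, Proposition~\ref{p:nulla}, Corollary~\ref{7.3+7.6}(iii) — have exactly this polynomial-growth form, the conclusion follows, and with it property~(S') and hence Schwartz correspondence for $\big(M_2(\bC),U_2\big)$ via Theorem~\ref{rendiconti}.
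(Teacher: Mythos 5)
Your construction follows the paper's own route almost verbatim (jet at the origin from Corollary~\ref{mn-jet}, Borel-type extension from Lemma~\ref{jet-extension}, subtraction of $\cG_{\tau_{m,n}}^{-1}(h_{|_{\Sigma_\cD^n}})$, Proposition~\ref{p:nulla} for the rapidly vanishing remainder, and the norm bookkeeping through Corollary~\ref{7.3+7.6}), but there is a genuine gap exactly at the point you dismiss with ``reapplying Lemma~\ref{jet-extension} closes the loop''. You fix the parameter of Lemma~\ref{jet-extension} equal to $N$ from the start, i.e.\ you work with $h=h_{m,n,N}$, and at the end you need $\|h\|_{(N'')}$, where $N''$ is the order produced by Corollary~\ref{7.3+7.6}(iii) applied at order $N'$; in general $N''>N$. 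Lemma~\ref{jet-extension} bounds $\|h_{m,n,M}\|_{C^k}$ only for $k\le M$, and this restriction is not cosmetic: in its proof the scales are $\eps_d=\big((n+|m|)!(1+\al_d\|f\|_{(N_d)})\big)^{-1}$ for $d>M$, so for $k>M$ the terms $h_d$ with $M<d\le k$ contribute factors $\eps_d^{d-k}$, i.e.\ positive powers of $(n+|m|)!$, to $\|h_{m,n,M}\|_{C^k}$. Hence the $C^{N''}$ norm of the specific function $h_{m,n,N}$ you use is not polynomially controlled in $(m,n)$, and ``reapplying'' the lemma does not repair this, because applying it with a larger order produces a \emph{different} function, while your $u$ and $v_{m,n}$ were built from the old one.

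The gap is fixable, and the fix is precisely the device the paper employs: since $N'$ (from Proposition~\ref{p:nulla}) and $N''$ (from Corollary~\ref{7.3+7.6}(iii)) depend only on $N$ and not on $h$, one must run the entire argument with $h=h_{m,n,M}$ for a single $M\ge\max(N,N'')$ chosen in advance; then both the $C^N$ bound on the final extension and the $C^{N''}$ bound fed into Corollary~\ref{7.3+7.6}(iii) fall within the range $k\le M$ covered by \eqref{hCk}, and the rest of your estimate (rapid decay of $\|f_{m,n}\|_{(N')}$ beating the polynomial constants, plus the rapidly decaying remainder $r_{m,n}$) goes through as you describe. Two minor points: the extra cutoff $\eta_n h$ is superfluous, since nothing requires the extension to be supported near the cone $C_n$, and your claim that $\eta_n$ has derivative bounds uniform in $n$ is not justified anywhere; simply dropping that step avoids the issue.
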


\begin{proof}
For $M$ to be chosen afterwards, let $h_{m,n,M}$ be the function defined in Lemma~\ref{jet-extension}. Since any Schwartz norm of $f_{m,n}$ is rapidly decaying in $(m,n)$, the $M$-Schwartz norm of $h_{m,n,M}$ is also rapidly decaying in $(m,n)$. 
Moreover, let $g_{m,n}=g_{m,n,M}=\cG_{\tau_{m,n}}^{-1}\big({h_{m,n,M}}_{|_{\Sigma^n_\cD}}\big)$. 
Then $g_{m,n}$ is a Schwartz function on $G$ of type $\tau_{m,n}$ and
\[
\left(\frac{d}{d\xi_1}\right)^q_{|_{\xi_1=0}} \cG_{\tau_{m,n}}(f_{m,n}-g_{m,n})(\xi_1,\xi_1(-2j+n))=0
\qquad\forall j=0,\dots, n,\quad\forall q\geq 0.
\]
By Proposition~\ref{p:nulla}, there exists $v_{m,n}=v_{m,n,M}$ in $\cS(\bR^2)$  
 such that 
$$
v_{m,n}(\xi_1,\xi_1(-n+2j))=  \cG_{\tau_{m,n}}(f_{m,n}-g_{m,n}) (\xi_1,\xi_1(-n+2j))\qquad  \forall \xi_1\geq 0,\quad j=0,\ldots,n.
$$

Applying Corollary~\ref{7.3+7.6} (ii),  we obtain
\begin{align*}
\|v_{m,n,M}\|_{(N)}&\leq C_N\, \|f_{m,n}-g_{m,n,M}\|_{(N')}
\\
&\leq C_N\, \|f_{m,n}\|_{(N')}+\|\cG_{\tau_{m,n}}^{-1}h_{m,n,M}\|_{(N')}
\\
&\leq C_N\, \|f_{m,n}\|_{(N')}+C_{m,n,N}\|h_{m,n,M}\|_{(N'')}
\end{align*}
where $N',N''$ depend only on $N$ and the constant $C_{m,n,N}$ has polynomial growth in $(m,n)$.
Choosing $M$ bigger than $N$ and $N''$ and letting
\[
u^N_{m,n}=v_{m,n,M}+h_{m,n,M}
\]
we obtain a Schwartz extension to $\bR^2$ whose $N$-Schwartz norm is rapidly decaying in $(m,n)$. 
\end{proof}

\begin{theorem}\label{correspondence}
The Gelfand transform is a topological   isomorphism of 
 $\cS(G)^{{\rm Int}(K)}$  onto $\cS(\Sigma_\cD)$.
\end{theorem}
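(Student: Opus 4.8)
The plan is to derive Theorem~\ref{correspondence} directly from the reduction in Theorem~\ref{rendiconti} together with the verification of condition~(S') carried out in Proposition~\ref{S'}. For the strong Gelfand pair $(G,K)=\big(M_2(\bC),U_2\big)$, property~(S) is exactly the assertion that the Gelfand transform $\cG$ is an isomorphism of $\cS(G)^{{\rm Int}(K)}$ onto $\cS(\Sigma_\cD)$, and Theorem~\ref{rendiconti} states that (S) holds if and only if (S') does. Since Proposition~\ref{S'} establishes~(S'), property~(S) holds, and the theorem follows. The only point remaining is the topological character of the map, which I address below.

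It is worth unwinding the mechanism by which (S') yields the two halves of the isomorphism. For the inclusion $\cG\big(\cS(G)^{{\rm Int}(K)}\big)\subseteq\cS(\Sigma_\cD)$: given $f=\sum_{(m,n)\in E}f_{m,n}$ in $\cS(G)^{{\rm Int}(K)}$ and $N\in\bN$, Proposition~\ref{S'} produces Schwartz extensions $u^N_{m,n}$ of the components $\cG_{\tau_{m,n}}f_{m,n}$ with $\|u^N_{m,n}\|_{(N)}$ rapidly decaying in $(m,n)$; hence $\sum_{(m,n)\in E}u^N_{m,n}$ converges in the $N$-th Schwartz norm, its restriction to $\Sigma_\cD$ equals $\cG f$, and a standard patching over $N$ (as in \cite[Prop.~5.2, Thm.~7.1]{ADR3}) yields a genuine Schwartz extension together with the continuity estimate $\|\cG f\|_{(N)}\lesssim\|f\|_{(N')}$. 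For surjectivity: given $g\in\cS(\bR^4)$ one restricts to each component $\Sigma_\cD^n\times\{(n^2+2n,m)\}$, inverts by Corollary~\ref{7.3+7.6}(ii), and reassembles using the polynomial-in-$(m,n)$ bounds of Corollary~\ref{7.3+7.6}(iii) to obtain a preimage in $\cS(G)^{{\rm Int}(K)}$. Injectivity of $\cG$ follows from the bijectivity of each $\cG_{\tau_{m,n}}$ (Corollary~\ref{7.3+7.6}(ii)) together with the decomposition $f=\sum_{(m,n)}f_{m,n}$. Finally, continuity of $\cG$ in both directions — hence that the isomorphism is topological — follows from the forward estimate just described and the open mapping theorem for Fr\'echet spaces \cite{Tr}, exactly as in the proof of Theorem~\ref{(S)ridotto}.

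In effect all the substance of the argument has already been absorbed into Proposition~\ref{S'}; this last step is only the assembly of the $K$-type components into a statement about the whole pair. Were one to approach Theorem~\ref{correspondence} afresh, the genuine difficulty would be precisely the one resolved in Section~8: although each $\cG_{\tau_{m,n}}$ is an isomorphism with a polynomial-growth inverse (Theorem~\ref{(S)ridotto}, Corollary~\ref{7.3+7.6}), the Schwartz extensions furnished by that theorem need not decay fast enough in $(m,n)$ to be summed, so one must first construct — via the jet estimates of Lemmas~\ref{Gderivatives}--\ref{Cramer} and the Borel-type summation of Lemma~\ref{jet-extension} — extensions one of whose Schwartz norms decays rapidly in the $K$-type parameter. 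Once that is in hand, as it now is, Theorem~\ref{correspondence} is immediate.
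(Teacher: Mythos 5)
Your proposal is correct and follows essentially the same route as the paper: the forward inclusion is obtained by assembling the rapidly decaying extensions of Proposition~\ref{S'} across $K$-types (the paper does this via a diagonal process as in \cite{ADR3}), surjectivity and continuity of the inverse come from Corollary~\ref{7.3+7.6}, and the topological statement is closed with the open mapping theorem for Fr\'echet spaces. Your framing through Theorem~\ref{rendiconti} is just a slightly more explicit citation of the reduction the paper's proof implements directly.
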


\begin{proof} 
Let $f$ be in $\cS(G)^{{\rm Int}(K)}$. By a diagonal process, cf. \cite{ADR3}, we derive from Proposition~\ref{S'} the existence of a single sequence $\{u_{m,n}\}$ of Schwartz functions on $\bR^2$, each extending the corresponding transform $\cG_{\tau_{m,n}}f_{m,n}$ and with Schwartz norms of any order rapidly decaying in $(m,n)$. It is then obvious that the function 
$$
v(\xi_1,\xi_2,n^2+2n,m)=u_{m,n}(\xi_1,\xi_2)\ ,\qquad (\xi_1,\xi_2)\in\bR^2\ ,
$$
extends to a Schwartz function on $\bR^4$ which coincides with $\cG f$ on $\Sigma_\cD$. This implies that $\cG$ maps $\cS(G)^{{\rm Int}(K)}$ into $\cS(\Sigma_\cD)$.

It follows from Corollary \ref{7.3+7.6} that $\cG$ is surjective and  $\cG^{-1}$ is continuous. We can then  apply the open mapping theorem again to conclude that $\cG$ is an isomorphism.
\end{proof}

For completeness, we derive from Theorem \ref{correspondence} one last property of the spherical transforms $\cGn$  that has not been discussed so far.

In Proposition \ref{inversocontinuo}  we showed that the inverses of these transforms satisfy Schwartz norm estimates with pairs $(M,N_M)$ of orders that do not depend on $(m,n)$ and with constants that grow polynomially in $(m,n)$, but norm estimates for the transforms themselves could not be established with the tools developed there. We do it now, in the following form. 

\begin{corollary}\label{Gnpolinomiale} 
For every $M\in\bN$ there exist $N_M,Q_M\in\bN$ and $A_M>0$ such that, for every $n\in\bN$ and $F\in\cS\big(\bC^2,{\rm End}(V_n)\big)^K$, the spherical transform $\cG_nF$ extends to $g_M\in\cS(\bR^2)$ satisfying 
\be\label{EndVn-estimates}
\|g_M\|_{(M)}\le A_{M}(n+1)^{Q_M}\|F\|_{(N_M)}\ .
\ee
\end{corollary}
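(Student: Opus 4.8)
The plan is to bootstrap the estimate from the global Schwartz correspondence, Theorem~\ref{correspondence}, by passing through the single $K$-type $\tau_{n,n}$ and paying only the polynomial price recorded in Lemma~\ref{norme-f-Af}. The first step is to make Theorem~\ref{correspondence} quantitative. Since $\cG$ is a topological isomorphism of $\cS(G)^{{\rm Int}(K)}$ onto $\cS(\Sigma_\cD)$, and the latter carries the quotient topology from the restriction map $\cS(\bR^4)\to\cS(\Sigma_\cD)$, continuity of $\cG$ yields: for every $M\in\bN$ there are $N_M\in\bN$ and $C_M>0$ --- independent of everything else --- such that each $h\in\cS(G)^{{\rm Int}(K)}$ admits a Schwartz extension $G_h\in\cS(\bR^4)$ of $\cG h$ with $\|G_h\|_{(M)}\le C_M\,\|h\|_{(N_M)}$. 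This is exactly the open-mapping remark already invoked in the proof of Theorem~\ref{correspondence}; the infimum in the quotient seminorm is attained up to a harmless factor $2$, absorbed into $C_M$.

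Next, given $F\in\cS\big(\bC^2,{\rm End}(V_n)\big)^K$, set $f=A_{n,n}^{-1}F$, which is legitimate since $(n,n)\in E$. Then $f\in\cS(G)^{{\rm Int}(K)}_{\tau_{n,n}}$ and Lemma~\ref{norme-f-Af}, applied with $|m|=n$, gives
\[
\|f\|_{(N_M)}\le (1+n)^{N_M}(1+n)^{2N_M+1/2}\,\|F\|_{(N_M)}\le (1+n)^{3N_M+1}\,\|F\|_{(N_M)}\ .
\]
Applying the quantitative statement above to $h=f$ produces $G_f\in\cS(\bR^4)$ extending $\cG f$ with $\|G_f\|_{(M)}\le C_M\|f\|_{(N_M)}$. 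Because $f$ has $K$-type $\tau_{n,n}$, the transform $\cG f$ is supported on $\Sigma_\cD^{n}\times\{(n^2+2n,n)\}$, where it equals $\cG_{\tau_{n,n}}f=\cG_n\big(A_{n,n}f\big)=\cG_nF$, by the identity $\cG_{\tau_{m,n}}=\cG_n\circ A_{m,n}$ and \eqref{matrix-sph}. Hence the slice $g_M(\xi_1,\xi_2):=G_f(\xi_1,\xi_2,\,n^2+2n,\,n)$ lies in $\cS(\bR^2)$ and restricts to $\cG_nF$ on $\Sigma_\cD^n$.

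Finally I would note that the restriction operator $\cS(\bR^4)\to\cS(\bR^2)$, $G\mapsto G(\cdot,\cdot,a,b)$, is bounded with norm independent of the frozen coordinates $(a,b)$: for the sup-type weighted norms this is immediate, since the $\bR^4$-weight dominates the $\bR^2$-weight along the slice and derivatives in $(\xi_1,\xi_2)$ are among all derivatives, while for an $L^2$-based convention one invokes the translation-invariant trace inequality at the cost of replacing the order $M$ by $M+2$. In either case $\|g_M\|_{(M)}\le C\,\|G_f\|_{(M')}$ with $C$ and $M'$ depending only on $M$, and combining the three inequalities yields
\[
\|g_M\|_{(M)}\le C\,C_M\,(1+n)^{3N_M+1}\,\|F\|_{(N_M)}\ ,
\]
so the corollary holds with $Q_M=3N_M+1$ and $A_M=C\,C_M$.

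The only point genuinely needing care is the uniformity in the first step: one must be sure that the orders $N_M$ and constants $C_M$ coming out of the open mapping theorem for $\cG$ do not depend on the $K$-type. They cannot, precisely because they govern the single continuous map $\cG$ on the whole space $\cS(G)^{{\rm Int}(K)}$; consequently the entire $n$-dependence is confined to the polynomial factor from Lemma~\ref{norme-f-Af} and to the $n$-free cost of slicing, and no new analysis is required beyond these observations.
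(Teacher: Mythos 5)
Your proof is correct and follows essentially the same route as the paper's: both make Theorem~\ref{correspondence} quantitative via continuity of $\cG$ (with orders $N_M$ and constants $C_M$ independent of $n$), transfer $F$ to a scalar $K$-central function of a single $K$-type through $A_{m,n}\inv$ at a polynomial cost in $n$, and obtain $g_M$ by slicing the $\bR^4$ extension at the frozen spectral coordinates $(n^2+2n,m)$. The only differences are cosmetic: you take $m=n$ and quote Lemma~\ref{norme-f-Af} for the norm comparison, whereas the paper takes $m=0$, redoes that comparison in equivalent sup-type norms, and uses the largeness of the frozen coordinate $\xi_3=n^2+2n$ to slightly improve the exponent $Q_M$ --- none of which affects correctness.
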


\begin{proof}
It is convenient  to replace the $L^2$-Schwartz norms in \eqref{M-norms} with the following equivalent $L^\infty$-Schwartz norms:
$$
\|g\|_{(M,\bR^d)}=\max_{0\leq p\leq M}\sup_{\xi\in\bR^d}(1+|\xi|^2)^M\big|\Delta_\xi^p g(\xi)\big| \ ,
$$ 
on the spherical transform side (with $d=2$ or 4), 
$$
\|f\|_{(M,G)}=\max_{0\leq p,q\leq M}\sup_{(k,z)\in G}(1+|z|^2)^M\big|\Delta_z^p(\Cas-X_4^2)^q f(k,z)\big| \ ,
$$ 
for functions  on $G$, and
$$
\|F\|_{(M,\bC^2)}=\max_{0\leq p\leq N_M}\sup_{(k,z)\in G}(1+|z|^2)^{N_M}\big\|\Delta_z^p F(z)\big\|_{HS}\ ,
$$
for ${\rm End}(V_n)$-valued functions on $\bC^2$.

The equivalence mentioned above is a well-known fact.  It is also easy to prove that, in the case of ${\rm End}(V_n)$-valued functions, the constants involved in the equivalence have polynomial growth in $n$.
Hence it suffices to establish \eqref{EndVn-estimates} with the $(M,\bR^d)$, resp. $(N_M,\bC^2)$ norms.

Given $F\in\cS\big(\bC^2,{\rm End}(V_n)\big)^K$, let $f=A_{0,n}\inv F\in\cS(G)^K_{\tau_{0,n}}$.
By Theorem \ref{correspondence}, given $M\in\bN$, its (strong) spherical transform $\cG f$ defined on $\Sigma_\cD$ admits a Schwartz extension $h_M$ on $\bR^4$ such that
$$
\|h_M\|_{(2M,\bR^4)}\le C_M\|f\|_{(N_M,G)}\ ,
$$
with $N_M$ and $C_M$ independent of $n$. 

Decompose $\xi\in\bR^4$ as $(\xi',\xi'')$ with $\xi'=(\xi_1,\xi_2)$, $\xi''=(\xi_3,\xi_4)$ and define $\xi''_n=\big(n(n+2),0\big)$.

Then $g_M (\xi')=h_M(\xi',\xi''_n)$ extends $\cG_{\tau_{0,n}}f=\cG_nF$. Using the inequality
$$
(n+1)^2\big(1+|\xi'|^2\big)\le  \big(1+|\xi'|^2+|\xi''_n|^2\big)^2\ ,
$$
we obtain that
$$
\|g_M\|_{(M,\bR^2)}\le (n+1)^{-2M}\|h_M\|_{(2M,\bR^4)}\ .
$$

On the other hand, by Remark \ref{rem:fAmnf}, 
$$
f(k,z)=(n+1)\sum_{i,j}f_{ij}(z)\overline{\big(\tau_{0,n}(k)\big)_{ij}} ,
$$
where $\big(f_{ij}(z)\big)_{i,j}=F(z)$. It follows that
$(\Cas-X_0^2) f=n(n+2) f$, so that
\beas
\|f\|_{(N_M,G)}&\le(n+1)^{2N_M} \max_{0\leq p\leq N_M}\sup_{(k,z)\in G}(1+|z|^2)^{N_M}\big|\Delta_z^p f(k,z)\big|\\
&\le (n+1)^{2N_M}\|F\|_{(N_M,\bC^2)}\ .
\eeas

Therefore
\[
\|g_M\|_{(M,\bR^2)}\le (n+1)^{2N_M-2M}\|F\|_{(N_M,\bC^2)}\ .\qedhere
\]
\end{proof}

\bigskip

\end{document}